\renewenvironment{proof}[1][Proof]{\textbf{#1.} }{\ \rule{0.5em}{0.5em}}
\DeclareMathOperator{\Isom}{Isom}
\DeclareMathOperator{\Lin}{Lin}
\DeclareMathOperator{\Symm}{Symm}
\DeclareMathOperator{\I}{I}
\renewenvironment{proof}[1][Proof]{\textbf{#1.} }
{\ \rule{0.5em}{0.5em}}
\newtheorem{theorem}{Theorem}
\newtheorem{prop}{Proposition}
\newtheorem{lemma}{Lemma}
\newtheorem{corollary}{Corollary}
\newtheorem{quest}{Question}
\newtheorem{problem}{Problem}
\theoremstyle{definition}
\newtheorem{definition}{Definition}
\newtheorem{remark}{Remark}
\newtheorem{example}{Example}
\begin{document}
\selectlanguage{english}

\title
[On $m$-point homogeneous polytopes in Euclidean spaces]
{On $m$-point homogeneous polytopes \\in Euclidean spaces}
\author{V.N.~Berestovski\u\i, Yu.G.~Nikonorov}

\address{Berestovski\u\i\  Valeri\u\i\  Nikolaevich \newline
Sobolev Institute of Mathematics of the SB RAS, \newline
4 Acad. Koptyug Ave., Novosibirsk 630090, RUSSIA, \newline
Principal scientific researcher}
\email{valeraberestovskii@gmail.com}

\address{Nikonorov\ Yuri\u\i\  Gennadievich\newline
Southern Mathematical Institute of VSC RAS \newline
53 Vatutina St., Vladikavkaz, 362025, RUSSIA, \newline
Principal scientific researcher}
\email{nikonorov2006@mail.ru}

\thanks{The work of the first author was carried out within the framework of the state Contract to the IM SB RAS, project FWNF-2022-0006.}

\begin{abstract}
This paper is devoted to the study the $m$-point homogeneity property and the point homogeneity degree for finite metric spaces.
Since the vertex sets of regular polytopes, as well as of some their generalizations, are homogeneous, we
pay much attention to the study of the homogeneity properties of the vertex sets of polytopes in Euclidean spaces.
Among main results, there is a classification of polyhedra with all edges of
equal length and with 2-point homogeneous vertex sets.
In addition, a significant part of the paper is devoted to the development of methods and tools
for studying the relevant objects.

\vspace{2mm}
\noindent
2020 Mathematical Subject Classification: 54E35, 52B15, 20B05.

\vspace{2mm} \noindent Key words and phrases:
Archimedean solid,
finite homogeneous metric space, Gosset polytope, $m$-point homogeneous metric space,
 Platonic solid, point homogeneity degree, regular polytope,
semiregular polytope.
\end{abstract}

\maketitle

\section{Introduction}\label{sec.0}

The main object of our study are homogeneous metric spaces with special properties.
For instance, we interested in {\it homogeneous} finite metric spaces $(M,d)$ which means that the isometry group $\Isom(M)$ of $(M,d)$ acts transitively on
$M$.
A finite homogeneous metric subspace of an Euclidean space
represents the vertex set of a compact convex polytope
with the isometry group that is transitive on the vertex set;
in each case, all vertices lie on a sphere.
In \cite{BerNik19, BerNik21, BerNik21n}, the authors  obtained the complete description of the metric properties
of the vertex sets of regular and semiregular polytopes
in Euclidean spaces from the point of view of the normal homogeneity and the Clifford~---~Wolf homogeneity, see also survey \cite{BerNik21nn}.
The coresponding classes of (generalized) normal homogeneous and
Clifford~--- Wolf homogeneous  Riemannian manifolds were studied earlier
in~\cite{BerNik2008, BerNik08, BerNik2009,  BerNik14, BerNik20}.

\medskip

This paper is devoted to another one special property of metric spaces, which is a strengthening of the notion of homogeneity.

\begin{definition}\label{de:mpoint}
A metric space $(M,d)$ is called {\it $m$-point homogeneous}, $m \in \mathbb{N}$, if for every $m$-tuples
$(A_1, A_2,\dots, A_m)$ and $(B_1,B_2,\dots, B_m)$  of elements of $M$ such that $d(A_i,A_j)=d(B_i,B_i)$, $i,j =1,\dots, m$,
there is an isometry $f \in \Isom(M)$  with the following property: $f(A_i)=B_i$, $i=1,\dots, m$.
\end{definition}

From a formal point of view, there can be coinciding points among the points
$A_1,\dots,A_m$ (as well as among the points $B_1,\dots,B_m$) in the above definition.
This observation immediately implies the following result.

\begin{prop}\label{pr:m-point hom_obv}
If a metric space $(M,d)$ is $m$-point homogeneous for some $m \geq 2$, then it is $k$-point homogeneous for any  $k=1,\dots, m-1$.
If the set $M$ is finite, $l$ is its cardinality, and $(M,d)$ is $l$-point homogeneous, then $(M,d)$ is $m$-point homogeneous for all $m \geq 1$.
\end{prop}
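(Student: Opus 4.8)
The plan is to prove the two assertions separately, both by exhibiting the required isometry as a suitable restriction or instance of an already-available one.

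For the first claim, suppose $(M,d)$ is $m$-point homogeneous and fix $k$ with $1 \le k \le m-1$. Given two $k$-tuples $(A_1,\dots,A_k)$ and $(B_1,\dots,B_k)$ with $d(A_i,A_j)=d(B_i,B_j)$ for all $i,j$, I would pad each tuple out to length $m$ by repeating the last entry, setting $A_{k+1}=\dots=A_m=A_k$ and $B_{k+1}=\dots=B_m=B_k$. The point, emphasized in the remark immediately preceding the proposition, is that coinciding points are permitted in Definition~\ref{de:mpoint}. One checks that the enlarged $m$-tuples still satisfy the distance-matching condition: for indices in the padded range the distances are either $0$ (between two copies of $A_k$, matched by $0$ between two copies of $B_k$) or equal to $d(A_i,A_k)=d(B_i,B_k)$, which holds by hypothesis. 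Applying $m$-point homogeneity yields an isometry $f$ with $f(A_i)=B_i$ for $i=1,\dots,m$, and restricting attention to the first $k$ indices gives exactly the isometry required for $k$-point homogeneity.

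For the second claim, let $M$ be finite of cardinality $l$ and assume $(M,d)$ is $l$-point homogeneous; I want $m$-point homogeneity for every $m \ge 1$. By the first part it suffices to treat $m > l$, since for $m \le l$ the statement is already covered. Here the key observation is that in any $m$-tuple with $m > l$ there must be repeated entries, so the tuple uses at most $l$ distinct points of $M$. I would reduce an arbitrary pair of such $m$-tuples to a pair of tuples indexed by the distinct values, pass to $l$-tuples by padding as in the first part, apply $l$-point homogeneity to obtain $f$, and then verify that $f$ simultaneously sends $A_i$ to $B_i$ across all $m$ coordinates.

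The one place demanding genuine care is a bookkeeping point in the second claim: from $d(A_i,A_j)=d(B_i,B_j)$ one must deduce that the coincidence pattern of the $A$'s matches that of the $B$'s, i.e. $A_i=A_j$ exactly when $B_i=B_j$. This follows because $A_i=A_j$ is equivalent to $d(A_i,A_j)=0$, and likewise for the $B$'s, so the distance-matching hypothesis forces the two patterns to agree. Once this compatibility is in hand, the map on distinct values is well defined and the padding argument goes through verbatim. I expect this verification of matching coincidence patterns to be the only step that is not completely routine; the rest is the same padding trick used twice.
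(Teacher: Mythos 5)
Your proof is correct and is essentially the paper's own argument: the paper gives no explicit proof, stating that the proposition follows immediately from the observation that coinciding points are allowed in Definition~\ref{de:mpoint}, which is precisely the padding/repetition trick you carry out (including the coincidence-pattern check via $d(A_i,A_j)=0$ iff $A_i=A_j$). You have simply written out the details the paper leaves implicit.
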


\medskip

The most interesting results of this paper related to finite subsets of Euclidean space~$\mathbb{R}^n$ with some degree of homogeneity.
It is assumed that any such set $M$ is supplied with the metric $d$
induced from $\mathbb{R}^n$, hence, is a finite metric space itself.

Since the barycenter of a finite system of material points (with one and the same mass) in any Euclidean space is preserved for any bijection
(in particular, any isometry)
of this system, we have the following result.

\begin{prop}[\cite{BerNik19}] \label{pr.efhs}
Let $M=\{x_1, \dots, x_q\}$, $q\geq n+1$, be a finite homogeneous metric subspace of Euclidean
space $\mathbb{R}^n$, $n\geq 2$. Then
$M$ is the vertex set of  a convex polytope~$P$, that is situated in some sphere in $\mathbb{R}^n$ with radius
$r>0$ and center $x_0=\frac{1}{q}\cdot\sum_{k=1}^{q}x_k$.
In particular, $\Isom(M,d)\subset O(n)$.
\end{prop}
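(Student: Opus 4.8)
The plan is to exploit the hint that the barycenter $x_0=\frac{1}{q}\sum_{k=1}^{q}x_k$ is intrinsically determined by the metric structure of $M$, and to combine this with the transitivity furnished by homogeneity. First I would invoke the classical extension theorem for isometries of subsets of Euclidean space: any distance-preserving self-map $f$ of the finite set $M\subset\mathbb{R}^n$ extends to a global rigid motion $\widetilde f(x)=Ux+v$ with $U\in O(n)$ and $v\in\mathbb{R}^n$. (One proves this by translating so that a chosen point and its image coincide, checking that $f$ then preserves inner products by expanding $\|f(a)-f(b)\|^2=\|a-b\|^2$, and extending linearly and orthogonally.) This legitimizes speaking of $f(x_0)$ even though $x_0$ need not belong to $M$.

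The key observation is that every isometry fixes the barycenter. Since $f$ is a bijection of $M$, the set $\{f(x_1),\dots,f(x_q)\}$ equals $\{x_1,\dots,x_q\}$, so $\sum_k \widetilde f(x_k)=\sum_k x_k$; substituting $\widetilde f(x_k)=Ux_k+v$ yields $U x_0+v=x_0$, i.e. $\widetilde f(x_0)=x_0$. Placing the origin at $x_0$, we then have $v=0$, so each isometry is realized as the restriction of an orthogonal transformation, which is precisely the assertion $\Isom(M,d)\subset O(n)$.

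Next I would derive the sphere. For any $x_i,x_j\in M$, homogeneity provides $f\in\Isom(M)$ with $f(x_i)=x_j$; since $f$ fixes $x_0$ and preserves distances, $\|x_j-x_0\|=\|f(x_i)-f(x_0)\|=\|x_i-x_0\|$. Hence all points of $M$ are equidistant from $x_0$, so they lie on a sphere of radius $r:=\|x_1-x_0\|$ centered at $x_0$; moreover $r>0$, because $q\geq n+1\geq 3$ forces $M$ to contain more than one point, so the $x_k$ cannot all coincide with $x_0$.

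Finally, to see that $M$ is exactly the vertex set of the convex polytope $P=\conv(M)$, I would use strict convexity of the Euclidean ball. Every point of $M$ lies in the closed ball $\overline{B}(x_0,r)$, and each $x_i$ lies on its boundary sphere; if $x_i=\sum_j\lambda_j x_j$ were a nontrivial convex combination of the other points, strict convexity would push $x_i$ into the open ball, contradicting $\|x_i-x_0\|=r$. Thus each $x_i$ is an extreme point of $P$, and since the vertex set of $\conv(M)$ is always contained in $M$, the two sets coincide. I expect the extension theorem to be the only genuinely nontrivial ingredient; the remaining steps follow directly from homogeneity and the strict convexity of the ball.
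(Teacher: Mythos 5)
Your proof is correct and follows essentially the same route as the paper: the paper justifies this proposition (whose detailed proof it defers to \cite{BerNik19}) precisely by the observation that the barycenter is preserved by any bijection, hence any isometry, of the system, which is the crux of your argument. Your additional details --- the extension of isometries of a finite set to rigid motions, the transitivity argument placing all points on a sphere, and the strict-convexity argument identifying $M$ with the vertex set of $\conv(M)$ --- are the standard completions of that sketch and are all sound.
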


This result shows that {\it the theory of convex polytopes} is very important for the study of finite homogeneous subspaces of Euclidean spaces.
For a more detailed acquaintance with the theory of convex polytopes, we recommend \cite{Berg84, Cox73, Crom97, Grun}.
\smallskip

Let $P$ be a non-degenerate convex polytope in $\mathbb{R}^n$  with the barycenter in the origin $O=(0,0,\dots,0)\in \mathbb{R}^n$
(the property to be non-degenerate means that $\Lin(P)=\mathbb{R}^n$ or, equivalently, $O$ is in the interior of $P$).
The symmetry group $\Symm(P)$ of $P$ is the group of isometries of $\mathbb{R}^n$ that preserve $P$. It is clear that
each $\psi \in \Symm(P)$ is an orthogonal transformation of $\mathbb{R}^n$ (obviously, $\psi(O)=O$ for any symmetry $\psi$ of~$P$).

If $M$ is the vertex set of a polytope $P$ supplied with the metric $d$ induced from the Euclidean one in $\mathbb{R}^n$,
then the isometry group $\Isom(M,d)$ of the metric space $(M,d)$ is the same as $\Symm(P)$.

Recall that a polytope~$P$ in~$\mathbb{R}^n$
is {\it homogeneous\/} (or {\it vertex-transitive}) if
its symmetry (isometry) group acts transitively on the set of its vertices.
The following definition is natural.

\begin{definition}\label{de_poly_hom}
A convex polytope $P$ in $\mathbb{R}^n$ is called $m$-point homogeneous if its vertex set {\rm(}with induced metric $d$ from $\mathbb{R}^n${\rm)}
is $m$-point homogeneous.
\end{definition}

In particular, the property to be $1$-point homogeneous means just the property to be homogeneous.
Recall that regular as well as semiregular polytopes in Euclidean spaces are homogeneous (see details in Section \ref{sec.2}).

There are many equivalent definition of regular (convex) polytopes \cite{Mar}.
One of this is as follows: A polytope $P$ is regular if its symmetry group acts transitively on flags of $P$.
Recall that {\it a flag} in an $n$-dimensional polytope $P$ is a sequence of its faces $F_0, F_1, \dots, F_{n-1}$, where
$F_k$ is an $k$-dimensional face and $F_i \subset F_{i+1}$, $i=0,1,\dots,n-2$.
See discussion of this property e.~g. in \cite[Chapter 4]{DuVal}.

\smallskip

The above definition leads to the conjecture that
{\it the vertex set of any $n$-dimensional regular polytope is $n$-point homogeneous}.
But this conjecture is false (see details in Section \ref{se_m-point}).
Therefore, the following problem is natural:

\begin{problem}\label{pr:kpoin.euclid}
Classify all convex polytopes $P$ in $\mathbb{R}^n$ whose vertex sets are $m$-point homogeneous, where $m \geq 2$.
\end{problem}

One can check that every regular polyhedron  in $\mathbb{R}^3$ is $2$-point homogeneous (Proposition~\ref{pr:3dim.2point.reg}).
The same property has the cuboctahedron (Proposition~\ref{pr:co}).
The main results of this paper are the following theorems.

\begin{theorem}\label{th:reg_pol}
The tetrahedron, cube, octahedron, icosahedron are $m$-point homogeneous polyhedra for every natural $m$, while
the dodecahedron is $2$-point homogeneous but is not $3$-point homogeneous.
\end{theorem}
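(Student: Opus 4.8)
The plan is to prove a single reduction that handles all five solids at once, and then to separate the cases by an orbit count. By Proposition~\ref{pr:m-point hom_obv} it suffices to establish $m$-point homogeneity for each $m$ in turn, and I would induct on $m$, the cases $m=1$ (vertex-transitivity) and $m=2$ (Proposition~\ref{pr:3dim.2point.reg}) being known. Given two $m$-tuples with equal distance matrices, $(m-1)$-point homogeneity produces $\sigma\in\Symm(P)$ matching their first $m-1$ entries; after applying $\sigma$ the inductive step becomes the assertion $(\ast)$: for every finite set $S$ of vertices, the pointwise stabilizer of $S$ in $\Symm(P)$ acts transitively on the set of vertices having any prescribed tuple of distances to the points of $S$. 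The key simplification is that, by Proposition~\ref{pr.efhs}, all vertices lie on one sphere centered at the barycenter $O$, so for vertices $v,w$ one has $d(v,A)=d(w,A)$ if and only if $\langle v-w,A\rangle=0$; hence two vertices share all distances to $S$ exactly when $v-w\perp\Lin(S)$.

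This lets me stratify $(\ast)$ by $r=\dim\Lin(S)$. If $r=3$ then $v-w\perp\mathbb{R}^3$ forces $v=w$, so the prescribed set is a single vertex and $(\ast)$ holds trivially. If $r\le 1$, the pointwise stabilizer of $S$ contains all symmetries fixing the axis $\Lin(S)$, and $(\ast)$ is precisely $2$-point homogeneity, already available. The only substantial case is $r=2$: here $\Lin(S)$ is a plane $\Pi$ through $O$, the stabilizer of $S$ lies in $\{\Id,\rho_\Pi\}$ with $\rho_\Pi$ the reflection in $\Pi$, and the spherical remark shows any vertex off $\Pi$ with the prescribed distances is determined up to $\rho_\Pi$. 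Thus $(\ast)$ for $r=2$ is equivalent to condition (B): for every pair of vertices spanning a plane $\Pi$ through $O$, the reflection $\rho_\Pi$ lies in $\Symm(P)$. Since $\Symm(P)$ is transitive on ordered pairs of vertices at each fixed distance (Proposition~\ref{pr:3dim.2point.reg}) and the pointwise stabilizer of such a pair has order at most $2$, condition (B) is equivalent to the numerical statement that for each non-antipodal vertex-distance the number of ordered pairs at that distance equals $\tfrac12|\Symm(P)|$.

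It remains to check this count. For the tetrahedron all pairs are non-antipodal and congruent, giving $4\cdot3=12=\tfrac12\cdot24$; for the octahedron the $6\cdot4=24=\tfrac12\cdot48$ non-antipodal ordered pairs are congruent; for the cube the edge and face-diagonal classes each contribute $8\cdot3=24=\tfrac12\cdot48$; and for the icosahedron the two non-antipodal classes each contribute $12\cdot5=60=\tfrac12\cdot120$. In every case the count is $\tfrac12|\Symm(P)|$, so (B) holds, $(\ast)$ is verified for all $S$, and the induction yields $m$-point homogeneity for all $m$. For the dodecahedron $2$-point homogeneity is again Proposition~\ref{pr:3dim.2point.reg}, but here $|\Symm(P)|=120$ and each vertex has $3,6,6,3,1$ others at the five increasing distances, so the two middle (non-antipodal) classes give $20\cdot6=120=|\Symm(P)|$ ordered pairs, forcing trivial stabilizers and making (B) fail. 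To exhibit the resulting failure of $3$-point homogeneity, I would use the standard coordinates in which $(\pm1,\pm1,\pm1)$ are vertices and take $A_1=(1,1,1)$, $A_2=(1,1,-1)$ (at the second distance, $d=2$), with $A_3=(1,-1,1)$ and $B_3=(-1,1,1)$. Then $d(A_1,A_3)=d(A_1,B_3)=2$ and $d(A_2,A_3)=d(A_2,B_3)=2\sqrt2$, so $(A_1,A_2,A_3)$ and $(A_1,A_2,B_3)$ have equal distance matrices; but $A_1,A_2$ span the plane $\{x=y\}$ and the only nontrivial orthogonal map fixing both is $\rho_{\{x=y\}}$, the transposition $x\leftrightarrow y$, which sends the vertex $(1/\phi,0,\phi)$ to the non-vertex $(0,1/\phi,\phi)$ and so is not a symmetry. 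Hence no symmetry carries the first triple to the second.

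The whole scheme is essentially formal once the reduction is in place, so the main obstacle is setting up $(\ast)$ correctly — in particular recognizing, via the common sphere of Proposition~\ref{pr.efhs}, that distances to a linearly spanning set determine a vertex (making the rank-$3$ case automatic) and that the rank-$2$ case collapses to the single reflection condition (B). After that the affirmative cases are a bookkeeping of ordered-pair counts, and the dodecahedron is distinguished precisely because at its two middle distances these counts reach $|\Symm(P)|$, forcing trivial stabilizers and producing the explicit counterexample above.
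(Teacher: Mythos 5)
Your proposal is correct, and it takes a genuinely different route from the paper's. The paper assembles the positive cases from separately established pieces: the tetrahedron via the symmetric group (Example~\ref{ex:reg_n_sim}), the octahedron, cube and icosahedron via Theorem~\ref{th:3dist_symm} (centrally symmetric polytopes with three vertex distances), and then Corollary~\ref{co:if-homog} to pass to $m$-point homogeneity for all $m$; the dodecahedron is handled in Theorem~\ref{th:dod} by computing how the isotropy group $\I(v)$ acts on each sphere $S(v,d_i)$ (twice transitively on the $d_1$ and $d_4$ spheres, simply transitively on the $d_2$ and $d_3$ spheres). You instead run a single induction on $m$, reduce the inductive step to the pointwise-stabilizer condition $(\ast)$, stratify by $r=\dim\Lin(S)$, and use the observation that equal distances to $S$ amount to orthogonality of the difference vector to $\Lin(S)$ --- essentially the paper's Lemma~\ref{distsph} --- which makes $r=3$ automatic and collapses $r=2$ to your reflection condition (B); this you verify by the orbit count ``ordered pairs at each non-antipodal distance $=\tfrac12|\Symm(P)|$''. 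This yields a clean uniform criterion: $2$-point homogeneity plus these pair counts already force $m$-point homogeneity for every $m$. What the paper's machinery buys is generality --- Theorems~\ref{th:3dist_symm} and \ref{th:3max} work in all dimensions and are reused for the orthoplex, the demihypercubes and the Gosset polytopes --- while your counting criterion buys economy in dimension $3$ and makes transparent exactly why the dodecahedron fails: its two middle distance classes each contain $120=|\Symm(P)|$ ordered pairs, so the pair stabilizers are trivial. Notably, your counterexample is the very same configuration as the paper's (three vertices of $S(A_1,d_2)$ at mutual spherical distance $d_3$), justified differently: the paper cites simple transitivity of $\I(v_1)$ on $S(v_1,d_2)$, whereas you show directly that the only candidate isometry, the reflection in the plane $\{x=y\}$, is not a symmetry of the dodecahedron.

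One cosmetic slip: with the usual coordinates $(\pm 1,\pm 1,\pm 1)$, $(0,\pm\varphi^{-1},\pm\varphi)$, $(\pm\varphi^{-1},\pm\varphi,0)$, $(\pm\varphi,0,\pm\varphi^{-1})$, the point $(\varphi^{-1},0,\varphi)$ is \emph{not} a vertex while $(0,\varphi^{-1},\varphi)$ \emph{is}, so the two roles in your final step are interchanged (your statement is literally correct for the mirror-image coordinatization). Either way, the transposition $x\leftrightarrow y$ sends a vertex to a non-vertex, so it is not a symmetry and your argument stands.
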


\begin{theorem}\label{th:edge}
If all edges of a given $2$-point homogeneous polyhedra $P\subset \mathbb{R}^3$ have the same length, then $P$ is either the cuboctahedron or a regular polyhedron.
\end{theorem}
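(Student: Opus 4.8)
The plan is to use $2$-point homogeneity to force a great deal of symmetry on $P$, thereby pinning it down to a short list of candidates, and then to eliminate the one spurious candidate by an explicit coordinate computation. First I would record that, by Proposition~\ref{pr:m-point hom_obv}, a $2$-point homogeneous $P$ is $1$-point homogeneous, i.e. vertex-transitive. Let $a$ denote the common length of all edges. The key observation is that any two edges are pairs of vertices at the \emph{same} distance $a$, so by $2$-point homogeneity there is an $f\in\Isom(M,d)=\Symm(P)$ carrying the endpoints of one edge to those of the other; since every element of $\Symm(P)$ permutes the $1$-skeleton of $P$, this $f$ carries the first edge onto the second. Hence $P$ is edge-transitive as well. (Note this is exactly where the hypothesis ``all edges equal'' is used: without it, $2$-point homogeneity relates only equidistant pairs and need not give edge-transitivity.)

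Next I would invoke the classical classification of convex polyhedra in $\mathbb{R}^3$ that are simultaneously vertex-transitive and edge-transitive: these are precisely the five regular (Platonic) solids together with the two quasiregular solids, the cuboctahedron and the icosidodecahedron (see \cite{Cox73,Grun}). If one prefers a self-contained reduction, prisms and antiprisms are excluded by edge-transitivity (a symmetry cannot carry an edge of a regular $n$-gonal face onto an edge of a square or triangular lateral face unless the polyhedron degenerates to the cube or octahedron), and among the remaining Archimedean solids only the two quasiregular ones are edge-transitive, the others having more than one edge-orbit. By Propositions~\ref{pr:3dim.2point.reg} and~\ref{pr:co}, every regular polyhedron and the cuboctahedron are $2$-point homogeneous, so each of them satisfies the conclusion; it therefore remains only to show that the icosidodecahedron is \emph{not} $2$-point homogeneous.

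This last step is the decisive computation. Writing $\phi=(1+\sqrt5)/2$, I would realize the icosidodecahedron with circumradius $\phi$, taking the six vertices $(\pm\phi,0,0)$, $(0,\pm\phi,0)$, $(0,0,\pm\phi)$ together with the twenty-four vertices obtained from $(\pm\tfrac12,\pm\tfrac\phi2,\pm\tfrac{\phi^2}2)$ by cyclic permutation of coordinates and arbitrary sign changes, and fix $v=(0,0,\phi)$. Since every vertex $w$ lies on the sphere of radius $\phi$, one has $|w-v|^2=2\phi^2-2\phi\,w_3$, so the distance from $v$ to $w$ depends only on the third coordinate $w_3$. In particular the four vertices $(\pm\phi,0,0)$ and $(0,\pm\phi,0)$ all have $w_3=0$ and so are equidistant from $v$. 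The stabilizer $\Symm(P)_v$ has order $|\Symm(P)|/30=4$ and consists of the identity, the reflections $x\mapsto-x$ and $y\mapsto-y$, and their product $(x,y,z)\mapsto(-x,-y,z)$; each of these preserves which coordinates of a point vanish, so none of them can send $(\phi,0,0)$ to $(0,\phi,0)$.

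Consequently the ordered pairs $(v,(\phi,0,0))$ and $(v,(0,\phi,0))$ realize one and the same distance, yet no isometry of $P$ maps the first onto the second, because such an isometry would fix $v$ and hence lie in $\Symm(P)_v$. This shows the icosidodecahedron is not $2$-point homogeneous and completes the classification. I expect the main obstacle to be the reduction in the second paragraph: establishing rigorously that vertex- and edge-transitivity alone pin down the candidate list (either by citing the classification or by carefully ruling out the prisms, antiprisms, and non-quasiregular Archimedean solids). Once that is secured, the icosidodecahedron is the only genuinely new case, and the coordinate computation above---detecting two equidistant vertices lying in different stabilizer orbits---dispatches it cleanly.
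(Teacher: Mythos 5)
Your strategy is sound and genuinely different from the paper's. You use the hypotheses only to extract vertex- and edge-transitivity (your derivation of edge-transitivity from $2$-point homogeneity plus equal edge lengths is exactly right, and is also implicit in the paper's first sentence), then appeal to the classification of simultaneously vertex- and edge-transitive convex polyhedra, and finally eliminate the icosidodecahedron by a stabilizer computation. That final computation is correct and is in substance identical to the paper's Proposition~\ref{pr:ic} (first proof): $\I(v)$ has order $4$, is generated by two orthogonal reflections, and cannot move one of the four vertices at spherical distance $\pi/2$ from $v$ onto another in a different $\I(v)$-orbit. Where the paper diverges is the middle step: it never invokes the isogonal-plus-isotoxal classification. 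Instead it notes that edge-transitivity forces at most two face orbits, and then uses the full strength of $2$-point homogeneity (not merely edge-transitivity) to rotate consecutive vertices $A\mapsto B\mapsto C$ of a face and conclude that the faces in each orbit are regular polygons. In the two-orbit case $P$ is then semiregular, and Lemma~\ref{le:semrer3} with Propositions~\ref{pr:co} and~\ref{pr:ic} leaves only the cuboctahedron; in the one-orbit case an angle-sum and orientation argument yields regularity directly. So the paper trades your single strong classification citation for the better-documented classification of semiregular polyhedra plus elementary synthetic arguments.

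The one concrete weakness in your write-up is the fallback ``self-contained reduction'' of the classification step. Ruling out prisms, antiprisms, and the non-quasiregular Archimedean solids only disposes of vertex-transitive polyhedra \emph{with regular faces}; vertex-transitivity plus edge-transitivity does not by itself place $P$ in that class. A rectangular box is vertex-transitive with non-regular faces, and the rhombic dodecahedron is edge-transitive with non-regular faces, so a priori the candidate list for a vertex- and edge-transitive polyhedron is not contained in the Platonic/Archimedean/prism/antiprism list, and your reduction is circular at exactly this point. To close it you must either cite a precise source for the full isogonal-plus-isotoxal (equivalently, isotoxal convex) classification --- the statement is true, but it is not really proved in \cite{Cox73} or \cite{Grun}; a survey such as \cite{Mar} is closer to what you need --- or prove that the faces are regular, which is precisely the content the paper supplies via $2$-point homogeneity. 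If you keep your route, reinstate that argument: for consecutive vertices $A,B,C$ of a face, $d(A,B)=d(B,C)$ gives an isometry sending $(A,B)$ to $(B,C)$, and analyzing how it acts on the two faces at the shared edge forces equal interior angles; edge-transitivity alone will not do this for you.
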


We obtain also several interesting results on $m$-point homogeneous finite subsets in Euclidean spaces $\mathbb{R}^n$ for an arbitrary $n$.
In particular, we get a couple of results on the $m$-point homogeneity degree
(see Definition \ref{de_degree_hom}) for some particular types of polytopes.
In order to do this, we developed some special tools and approaches that could be useful in the future studies.
\smallskip

As a rule, we use standard notation. When we deal with some metric space $(M,d)$,
for given $c\in M$ and $r \geq 0$, we consider $S(c,r)=\{x\in M\,|\, d(x,c)=r\}$,  $U(c,r)=\{x\in M\,|\, d(x,c)< r\}$,
and  $B(c,r)=\{x\in M\,|\, d(x,r)\leq r\}$, respectively
the sphere, the open ball, and the closed ball with the center $c$ and radius $r$. For $x,y \in \mathbb{R}^n$, the symbol $[x,y]$ means the closed interval in $\mathbb{R}^n$
with the ends $x$ and $y$.
\smallskip

The paper is organized as follows. In Section \ref{sec.1} we give some  important information on regular and semiregular polytopes in Euclidean spaces.
In Section \ref{sec.2} we prove some important results of general nature.
In Section \ref{sec.3} we study the $m$-point homogeneity property for some finite homogeneous metric spaces, mainly
the vertex sets of regular and semiregular polytopes.
In Section \ref{se_m-point} we obtain some results on the point homogeneity degree for some important classes of polytopes.
Section \ref{se_m-point.3-dim} is devoted to the study of $2$-point homogeneous polyhedra with equal edge lengths.
In particular, we prove Theorems \ref{th:reg_pol} and  \ref{th:edge}.
Finally, in Section \ref{ne_m-point.3-dim}, we study the $m$-point homogeneity for polyhedra with several edge lengths.
In Conclusion, some information on $2$-point-homogeneous Riemannian manifolds and some of their generalizations is given.
\smallskip

\section{Regular and semiregular polytopes}\label{sec.1}

In this section, we recall some important properties of regular and semiregular polytopes in Euclidean spaces.
Faces of dimension $n-1$ (hyperfaces) of a $n$-dimensional polytope
commonly referred to as {\it facets}.
Note that they are also called {\it cells\/} for $n=4$.

A $P$ is called a {\it polytope with~regular faces\/}
(respectively, a {\it polytope with congruent faces\/}),
if all its facets are regular
(respectively, congruent) polytopes.

A one-dimensional polytope is a closed segment,
bounded by two endpoints.
It is regular by definition.
Two-dimensional regular polytopes are convex regular
polygons on Euclidean plane.
For other dimensions, regular polytopes are defined inductively.
A convex $n$-dimensional polytope for $n \geq 3$ is called {\it regular},
if it is homogeneous
and all its facets are regular congruent to each other
polytopes.
This definition is equivalent to other definitions of regular convex
polytopes (see~\cite{Mar}).
A {\it vertex figure\/} of $n$-dimensional regular polytope,
$n\geq 3$, is a $(n-1)$-dimensional polytope,
which is the convex hull of the vertices,
having a common edge with a given vertex and different from it.

We also recall the definition of a wider class of semiregular convex polytopes.
For $n=1$ and $n=2$, semiregular polytopes are defined as regular.
A convex $n$-dimensional polytope for $n\geq 3$ is called
{\it semiregular} if it is homogeneous
and all its facets are regular polytopes.

A generalization of the class of semiregular polytopes is the class of uniform polytopes.
For $n\leq 2$, uniform polytopes are defined as regular.
For other dimensions, uniform polytopes are defined inductively.
A convex $n$-dimensional polytope for $n\geq 3$ is called
{\it uniform} if it is homogeneous
and all its facets are uniform polytopes.
In particular, for $n=3$, the classes of uniform and semiregular polytopes
coincide,
and for $n=4$ the facets of the uniform polytope must be
semiregular three-dimensional polytopes.
This class of polytopes is far from complete classification,
see known results in~\cite{{4D},{Mar}}.

The classification of regular polytopes of arbitrary dimension was first obtained by Ludwig Schl\"{a}fli and is presented in his book \cite{Schl},
see also Harold Coxeter's book~\cite{Cox73}.
The list of semiregular polytopes of arbitrary
dimension was first presented without proof in Thorold~Gosset's paper \cite{Gos}.
Later this list appeared in the work of Emanuel~Lodewijk~Elte \cite{Elte12}.
The proof of the completeness
of this list was obtained much later by Gerd~Blind and Rosvita~Blind, see  \cite{BlBl} and the references therein.
Semiregular (non-regular) polytopes in $\mathbb{R}^n$ for $n\geq 4$ are called {\it Gosset polytopes}.
A lot of additional information can be found in~\cite{rsr}.

We briefly recall the classification of regular and semiregular polytopes
in Euclidean spaces.

An one-dimensional polytope (closed segment) is regular.
Two-dimensional regular polyhedra (polygons) have equal sides
and are inscribed in a circle.

For regular three-dimensional polyhedra, the vertex figure
is a regular polygon.
It is well known that there are only five regular three-dimensional
polyhedra:
the tetrahedron, cube, octahedron, dodecahedron, and icosahedron.
These polyhedra are traditionally called {\it Platonic solids\/}.
Some important properties of these polyhedra could be found in~Table\,\ref{table0},
where $V$, $E$, and $F$ mean respectively the numbers of vertices, edges, and faces;
$\alpha$ is the dihedral angle; the number $\varphi:=\frac{1+\sqrt{5}}{2}$ is known as {\it the golden ratio}.

\renewcommand{\arraystretch}{1.4} 
\begin{table}[t]
\caption{
Regular 3-dimensional polyhedra.}
\label{table0}
\begin{center}
\begin{tabular}
{|p{0.3\linewidth}|p{0.05\linewidth}|p{0.05\linewidth}|p{0.05\linewidth}|p{0.27\linewidth}|p{0.09\linewidth}|}
\hline
Polyhedron                      &$V$ &$E$ &$F$ &$\alpha$ & Face  \\
\hline\hline
Tetrahedron                     &4   & 6  & 4  & $2\arcsin(1/\sqrt{3})$                            &{\large $\triangle$}  \\\hline
Cube (hexahedron)               &8   & 12 & 6  & $\pi/2$                                           &{\Large $\square$}    \\\hline
Octahedron                      &6   & 12 & 8  & $2\arcsin(\sqrt{2/3})$                            &{\large $\triangle$}  \\\hline
Dodecahedron                    &20  & 30 & 12 & $2\arcsin\left(\sqrt{\varphi}/\sqrt[4]{5}\right)$ &\Large \pentagon  \\\hline
Icosahedron                     &12  & 30 & 20 & $2\arcsin(\varphi/\sqrt{3})$                      &{\large $\triangle$} \\
\hline
\end{tabular}
\end{center}
\end{table}
\renewcommand{\arraystretch}{1}

There are 6 regular $4$-dimensional polytopes:
the hypertetrahedron ($5$-cell), hypercube  ($8$-cell),  hyperoctahedron ($16$-cell), $24$-cell, $120$-cell, and  $600$-cell.
A detailed description of the structure of four-dimensional regular
polytopes could be found e.~g. in \cite[Section 3]{BerNik21}.

For each dimension $n\geq 5$, there exists
exactly three regular polytopes:
the $n$-dimensional simplex,
the hypercube ($n$-cube) and the hyperoctahedron ($n$-orthoplex).

We recall some information on the structure of these polytopes and their symmetry groups.
In what follows, $S_n$ is denoted the symmetric group of degree $n$, i.~e. is the group of all permutations on $n$ symbols.
Recall that the order of $S_n$ is $n!$. We will use also the cyclic groups $\mathbb{Z}_k$ of order $k$.

Clearly, the symmetry group of the regular $n$-dimensional simplex $P$ is the group~$S_{n+1}$.

The symmetry group of the $n$-cube $P$ is the group $\mathbb{Z}_2^n \rtimes S_n$. It is easy to check if we represent
$P$ as a convex hull of the points $(\pm a,\pm a,\cdots, \pm a)\in \mathbb{R}^n$,
where $a>0$, and the sign could take value $-$ or $+$
on each place independently on other places. Here, $S_n$ is the group of permutation of all coordinates, and the group $\mathbb{Z}_2^n$
is generated by the maps $x\mapsto -x$ for all coordinates.

The symmetry group of the $n$-orthoplex $P$ is also the group $\mathbb{Z}_2^n \rtimes S_n$. It is easy to check if we represent
$P$ as a convex hull of the points $\pm b \, e_i$, $i=1,2,\dots,n$, where $b>0$ and $e_i$ is the $i$-th basic vector in $\mathbb{R}^n$
(it has all zero coordinates excepting $1$ in the $i$-th place).
This result could be obtained in more simple way. Since the $n$-orthoplex is dual to the $n$-cube, they have one and the same symmetry group.
The duality means that the centers of all facets of the $n$-orthoplex (the $n$-cube) constitute the vertex set of the $n$-cube (respectively, the $n$-orthoplex).

We now proceed to a brief description of the semiregular (non-regular) polytopes.

In three-dimensional space (in addition to Platonic solids), there are
the following semiregular polyhedra:
13 {\it Archimedean solids\/} and two infinite series
of regular prisms and right antiprisms. A detailed description of Archimedean solids could be found in Sections 4 and 5 of~\cite{BerNik21}.

A {\it right prism} is a polyhedron whose two faces (called bases) are congruent (equal) regular polygons,
lying in parallel planes, while other faces (called lateral ones) are rectangles (perpendicular to
the bases).
If lateral faces are squares then the prism is said to be {\it regular}.
In this case we get an infinite family of semiregular convex polyhedra.

A {\it right antiprism} is a semiregular polyhedron, whose two parallel faces (bases) are equal regular $n$-gons,
while other $2n$ (lateral) faces are regular triangles.
Note that the octahedron is an antiprism with triangular bases.

For $n=4$, we have exactly three semiregular polytopes:
the rectified 4-simplex,  the rectified 600-cell, and the snub 24-cell.
A detailed description of these polytopes could be found in Section 5 of \cite{BerNik21n}.

The unique (up to similarity)
semiregular Gosset polytope in $\mathbb{R}^n$ for $n\in \{5,6,7,8 \}$ we denote by the symbol $\operatorname {Goss\,}_n$.
Detailed descriptions of these polytopes could be found  in Sections 6,7,8, and 9 of \cite{BerNik21n} respectively.

\begin{example}\label{ex:Gos6.1}
Let us consider a brief explicit description of  $\operatorname {Goss\,}_6$.
This polytope
can be implemented in different ways. Let us set it with the coordinates of the vertices in $\mathbb{R}^6$, as it is done in~\cite{Elte12}.
Let us put $a=\frac{\sqrt{2}}{4}$ and $b=\frac{\sqrt{6}}{12}$. We define the points $A_i \in \mathbb{R}^6$, $i=1,\dots,27$, as follows:

\smallskip
\noindent
{\small
\begin{tabular}{lll}
$A_1=(0,0,0,0,0,4b)$, & $A_2=(a,a,a,a,a,b)$, & $A_3=(-a,-a,a,a,a,b)$,\\$A_4=(-a,a,-a,a,a,b)$, & $A_5=(-a,a,a,-a,a,b)$, & $A_6=(-a,a,a,a,-a,b)$,\\
$A_7=(a,-a,-a,a,a,b)$, & $A_8=(a,-a,a,-a,a,b)$, & $A_9=(a,-a,a,a,-a,b)$,\\
$A_{10}=(a,a,-a,-a,a,b)$, & $A_{11}=(a,a,-a,a,-a,b)$, & $A_{12}=(a,a,a,-a,-a,b)$,\\
$A_{13}=(-a,-a,-a,-a,a,b)$, & $A_{14}=(-a,-a,-a,a,-a,b)$, & $A_{15}=(-a,-a,a,-a,-a,b)$,\\
$A_{16}=(-a,a,-a,-a,-a,b)$, & $A_{17}=(a,-a,-a,-a,-a,b)$, & $A_{18}=(2a,0,0,0,0,-2b)$,\\
$A_{19}=(0,2a,0,0,0,-2b)$, & $A_{20}=(0,0,2a,0,0,-2b)$, & $A_{21}=(0,0,0,2a,0,-2b)$,\\
$A_{22}=(0,0,0,0,2a,-2b)$, & $A_{23}=(-2a,0,0,0,0,-2b)$, & $A_{24}=(0,-2a,0,0,0,-2b)$,\\
$A_{25}=(0,0,-2a,0,0,-2b)$, & $A_{26}=(0,0,0,-2a,0,-2b)$, & $A_{27}=(0,0,0,0,-2a,-2b).$\\
\end{tabular}}
\medskip

The Gosset polytope $\operatorname{Goss\,}_6$ is the convex hull of these points.
It is easy to check that $d(A_1, A_i)=1$ for $2\leq i \leq 17$ and $d(A_1, A_i)=\sqrt{2}$ for $18\leq  i \leq 27$.

It is clear that the points $A_2 - A_{17}$ are vertices of a five-dimensional demihypercube (the corresponding hypercube has $32$ vertices of the form
$(\pm a, \pm a,\pm a,\pm a,\pm a,b)$),
and the points $A_{18} - A_{27}$ are the vertices of the five-dimensional hyperoctahedron (orthoplex),
which is a facet of the polytope $\operatorname{Goss\,}_6$ (lying in the hyperplane $x_6=-2b$).
The origin $O=(0,0,0,0,0,0)\in \mathbb{R}^6$ is the center of the hypersphere described around $\operatorname{Goss\,}_6$ with radius $4b=\sqrt{2/3}$.
\end{example}

\section{General results}\label{sec.2}

Now we will prove one useful result on inscribed polytopes in $\mathbb{R}^n$.

\begin{theorem}\label{th: circ_dist}
Let $P$ be a convex polytope in $\mathbb{R}^n$ such that all its vertices lie in some hypersphere.
Then for every vertex $A$ of $P$, every nearest vertex $B$ of $P$ for $A$
is adjacent to $A$. As a corollary, the minimal distance between distinct vertices of $P$ is attained only on the pairs of adjacent vertices.
\end{theorem}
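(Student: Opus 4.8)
The plan is to translate distances into inner products, exploit the local (tangent-cone) structure of $P$ at the vertex $A$, and then invoke the fact that $B$ is an extreme point of $P$. First I would normalize: place the center of the hypersphere at the origin $O$ and let $r$ be its radius, so that $|A|=|B|=r$ and $|C|=r$ for every vertex $C$ of $P$. Since $d(A,X)^2 = 2r^2 - 2\langle A,X\rangle$ for any vertex $X$, the hypothesis that $B$ is a nearest vertex to $A$ is equivalent to $\langle A,B\rangle = \max\{\langle A,X\rangle : X\ne A \text{ a vertex of } P\}$. Writing $C_1,\dots,C_k$ for the vertices adjacent to $A$, this maximality yields in particular $\langle A,C_i\rangle \le \langle A,B\rangle =: s$ for each $i$; moreover, since the distinct points $A,B$ both lie on the sphere, $s < r^2 = |A|^2$.

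The structural input I would invoke is the standard fact from polytope theory (see e.g. \cite{Grun}) that the tangent cone of $P$ at the vertex $A$ is the conic hull of the edge directions $C_i - A$. Consequently $P - A$ is contained in this cone, so in particular
\[
B - A = \sum_{i=1}^{k}\lambda_i\,(C_i - A), \qquad \lambda_i \ge 0 .
\]
Taking the inner product of this identity with $A$ and using $\langle A,C_i\rangle \le s$ together with $s - r^2 < 0$, I obtain $s - r^2 = \sum_i \lambda_i(\langle A,C_i\rangle - r^2) \le (s - r^2)\sum_i \lambda_i$, and dividing by the negative quantity $s-r^2$ gives $t := \sum_i \lambda_i \le 1$. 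Note $t>0$ because $B\ne A$ forces $B-A\ne 0$.

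Finally I would play off this against extremality. If $t < 1$, then setting $B' := \sum_i (\lambda_i/t)\,C_i \in \conv\{C_1,\dots,C_k\}\subseteq P$ one has $B = (1-t)A + t\,B'$ with $t\in(0,1)$ and $A\ne B'$ (otherwise $B=A$), so $B$ is an interior point of the nondegenerate segment $[A,B']\subset P$, contradicting that $B$ is a vertex. Hence $t=1$, and $B=\sum_i \lambda_i C_i$ is a convex combination of the adjacent vertices $C_i$; being a vertex (extreme point), $B$ must coincide with one of them, i.e. $B$ is adjacent to $A$. The corollary then follows at once: if $d(A,B)$ realizes the global minimum over all pairs of distinct vertices, then $B$ is in particular a nearest vertex to $A$, hence adjacent.

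The only genuinely nontrivial ingredient is the tangent-cone representation $B-A=\sum_i\lambda_i(C_i-A)$ with $\lambda_i\ge 0$; everything else is a one-line inner-product estimate and the extreme-point dichotomy. I therefore expect the main point of care to be a clean justification (or precise citation) that for a convex polytope the cone of feasible directions at a vertex $A$ is generated exactly by the edges issuing from $A$, so that it is legitimate to expand $B-A$ using only the \emph{adjacent} vertices rather than all vertices of $P$.
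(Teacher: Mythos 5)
Your proof is correct, and its mechanics differ genuinely from the paper's. The paper also works with the cone $C(A)$ of feasible directions at $A$, but instead of expanding $B-A$ over edge generators it argues geometrically: it introduces $f(X)=2\cos\alpha$, where $\alpha$ is the angle at $A$ between $\overrightarrow{AO}$ and $\overrightarrow{AX}$, observes that for points of the circumsphere $f(X)$ is exactly the chord length $|AX|$ (so minimizing the distance over vertices amounts to minimizing $f$), then slices $C(A)$ by a hyperplane $H(C)$ orthogonal to $AO$ and takes the smallest ball in $H(C)$, centered on the axis, containing the slice $\mathcal{P}=C(A)\cap H(C)$; the contact points of that ball are vertices of $\mathcal{P}$, hence lie on edges of $P$ issuing from $A$, and those edge directions realize $\min f$. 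So both arguments ultimately rest on the same structural fact --- the extreme rays of the tangent cone at a vertex are exactly the edge directions --- but the paper reaches it implicitly through the vertices of a hyperplane slice, whereas you invoke it as a citation. Your route buys brevity and transparency: a single inner-product estimate gives $t=\sum_i\lambda_i\le 1$, the extreme-point dichotomy finishes, and the role of the inscribed-in-a-sphere hypothesis is isolated in the strict inequality $s<r^2$; indeed, for the rhombus counterexample in the paper's remark (which shows the hypothesis cannot be dropped) one computes $t=2$, so your inequality is exactly what fails there. The paper's route buys self-containedness (no black-box appeal to the edge-generation of the tangent cone) and yields as a byproduct that $\min f$ equals the length of an edge at $A$. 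One step worth spelling out in your write-up: the final claim that an extreme point of $P$ written as a convex combination $\sum_i\lambda_i C_i$ of points of $P$ must coincide with some $C_i$ with $\lambda_i>0$; it is immediate, but it is precisely what converts $t=1$ into adjacency.
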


\begin{proof} Without loss of generality we may suppose that all vertices of $P$ are in $S(O,1)$, where $O$ is the origin in $\mathbb{R}^n$.
Let us fix a vertex $A$ of $P$ and define for $X\in P\setminus \{A\}$
the function
$$
f(X)=2(\overrightarrow{AO}, \overrightarrow{AX})\cdot|\overrightarrow{AX}|^{-1},
$$
which is equal to $2\cos\alpha,$ where $\alpha$ is the angle between vectors
$\overrightarrow{AO}$ and $\overrightarrow{AX}$. It is clear that $f$ is positive. It has yet another geometric sense: $f(X)=|AY|$, where $Y\neq A$
is the intersection point of the ray $r(X):=\{t\cdot \overrightarrow{AX}, t\geq 0\}$ with $S(O,1)$. This is true if $X=Y\in S(O,1)$,
because we have $f(X)=2\cos\alpha$ for the isosceles triangle $\Delta AOX$ with lateral sides $|OA|=|OX|=1$. In another case this is true because $f(X)=f(Y)$.

Since $P$ is convex, then $C(A):=\{r(X), X\in P\setminus \{A\}\}$ is a convex
cone with the vertex $A.$
Let us choose a point $C$ on $(OA)$ such that
the hyperplane $H(C)$, passing through $C$ orthogonally to $\overrightarrow{AO}$, separates the vertex $A$ from other vertices of $P$. It is clear that
$\mathcal{P}=C(A)\cap H(C)$ is a convex $(n-1)$-dimensional polytope, lying
inside closed $(n-1)$-dimensional ball $B^{n-1}(C,r)=B^n(O,1)\cap H(C)$ with the center $C$ and radius $r=\sqrt{1-|OC|^2}.$ There exists the smallest number
$r_0>0$ such that
$$\mathcal{P}\subset B^{n-1}(C,r_0)=B^n(C,r_0)\cap H(C).$$
Then $r_0< r$ and the set $M_0:=\mathcal{P}\cap S^{n-2}(C,r_0),$ where $S^{n-2}(C,r_0)=S^{n-1}(C,r_0)\cap H(C),$  is non-empty and consists only from some
vertices of $\mathcal{P}$. If $X_0$ is any such vertex, then it is clear
from the previous argument that $f(X_0)=\min\{f(X): X\in P\setminus \{A\}\}$,
the point $X_0$ necessarily lies on some edge $e$ of $P$ with the vertex $A$, and the length of $e$ is equal to $f(X_0).$ The above considerations imply
the statements of Theorem \ref{th: circ_dist}.
\end{proof}

\smallskip

We get the following obvious

\begin{corollary}\label{th: homog_dist} Let $P$ be a homogeneous convex polytope in $\mathbb{R}^n$. Then
the minimal distance between distinct vertices of $P$ is attained on some pair of adjacent vertices.
\end{corollary}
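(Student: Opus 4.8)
The plan is to reduce the statement entirely to Theorem~\ref{th: circ_dist}, whose embedded corollary already asserts exactly the desired conclusion \emph{for any polytope all of whose vertices lie on a hypersphere}. The only real work, therefore, is to verify that a homogeneous convex polytope is automatically inscribed in a sphere; this is precisely the barycenter argument underlying Proposition~\ref{pr.efhs}. Since that reduction is immediate, the corollary deserves the label ``obvious.''

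First I would unwind the definition of homogeneity: by assumption the symmetry group $\Symm(P)$ acts transitively on the vertex set of $P$. Equivalently, writing $M$ for the vertex set with the induced Euclidean metric $d$, the space $(M,d)$ is a finite homogeneous metric space, since $\Isom(M,d)=\Symm(P)$. I would then invoke Proposition~\ref{pr.efhs} to place all vertices of $P$ on a single sphere centered at the barycenter $x_0=\frac{1}{q}\sum_{k=1}^{q}x_k$. If $P$ happens not to be full-dimensional, I would first pass to the affine hull of its vertices and work there; in that subspace $P$ is non-degenerate, the hypothesis $q\geq n+1$ is met automatically (a $k$-dimensional polytope has at least $k+1$ vertices), and the proposition applies verbatim. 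This disposes of any degenerate configuration.

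Once all vertices lie on a common hypersphere, the hypothesis of Theorem~\ref{th: circ_dist} is satisfied, and I would simply quote its conclusion. Concretely, let $\delta$ be the minimal distance between distinct vertices of $P$ and let $(A,B)$ be a pair realizing $\delta$. Then $B$ is a nearest vertex of $P$ to $A$, so by Theorem~\ref{th: circ_dist} the vertex $B$ is adjacent to $A$; hence $\delta$ is attained on the pair $(A,B)$ of adjacent vertices, as claimed.

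The main point to be careful about is not a geometric obstacle but a matter of bookkeeping: ensuring that the sphere-inscription hypothesis of Theorem~\ref{th: circ_dist} is genuinely available. This is exactly where homogeneity enters (through Proposition~\ref{pr.efhs}), and the only mild subtlety is the lower-dimensional case, handled by restricting to the affine span. No additional input beyond Theorem~\ref{th: circ_dist} and Proposition~\ref{pr.efhs} is required.
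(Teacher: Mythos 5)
Your proof is correct and follows exactly the route the paper intends: the paper presents this corollary as an ``obvious'' consequence of Theorem~\ref{th: circ_dist}, the sphere-inscription hypothesis being supplied by the barycenter argument of Proposition~\ref{pr.efhs}. Your extra care with the lower-dimensional case (passing to the affine hull, where the vertex count automatically satisfies the hypothesis) is a harmless refinement of the same argument, not a different approach.
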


\begin{remark}
It is easy to see that for the rhombus in $\mathbb{R}^2$ with the vertices $(-1,0)$, $(1,0)$, $(0,\alpha)$, and $(0,-\alpha)$, the minimal distance
between distinct vertices is attained on the vertices $(0,\alpha)$ and $(0,-\alpha)$ (that are not adjacent) for sufficiently small $\alpha$.
Analogous example could be easily produced in $\mathbb{R}^n$, $n \geq 3$.
This shows that the condition that a polyhedron is inscribed in a hypersphere is necessary in Theorem \ref{th: circ_dist}.
\end{remark}

\smallskip

Let us consider any homogeneous metric space $(M,d)$ with the isometry group $\Isom(M)$.
For any $x\in M$, the group $\I(x)=\{\psi \in \Isom(M) \,|\, \psi(x)=x \}$ is called {\it the isotropy subgroup at the point $x$}.
If $\eta \in \Isom(M)$ is such that $y=\eta (x)$, then, obviously, $\I(y)= \eta \circ \I(x) \circ \eta^{-1}$. Hence, the isotropy subgroups for every two points of
a given homogeneous finite metric $M$ space are conjugate each to other in the isometry group.
In particular, the cardinality of $M$ is the quotient of the cardinality of $\Isom(M)$ by the cardinality of $\I(x)$ (for any $x\in M$).
\smallskip

We will discuss some properties of $2$-point homogeneous metric spaces.
According to Definition \ref{de:mpoint},
a metric space $(M,d)$ is {\it two-point homogeneous {\rm(}$2$-point homogeneous{\rm)}},
if for every pairs  $(A_1,A_2)$ and $(B_1,B_2)$ of elements of $M$ such that $d(A_1,A_2)=d(B_1,B_2)$,
there is an isometry $f \in \Isom(M)$  with the following property: $f(A_i)=B_i$, $i=1,2$.

\begin{remark}
There are some publications, where some types of two-point homogeneity for finite sets with additional structures are studied.
For instance, we refer to \cite{Tam13},
where two-point homogeneous quandles are studied. On the other, we are focused on the study of metric spaces.
\end{remark}

For a given $x\in M$ and $r >0$ we consider the sphere $S(x,r)=\{y\in M\,|\,d(x,y)=r\}$ with the center $x$ and radius $r$.

\begin{remark}
If $(M,d)$ is a homogeneous subset in Euclidean space $\mathbb{R}^n$, then every sphere $S(x,r)$ lies in some $(n-1)$-dimensional Euclidean subspace,
since all points of this sphere are also on one and the same distance from the barycenter of $M \subset \mathbb{R}^n$
(recall that $\|x-a\|=C$ and $\|x-b\|=D$ for $x, a, b \in \mathbb{R}^n$
implies $2(x,a-b)=\|a\|^2-\|b\|^2-C^2+D^2$).
\end{remark}

It is easy to prove the following helpful result.

\begin{prop}\label{pr:two-point homogeneous}
A homogeneous metric space $(M,d)$ is two-point homogeneous if and only if for any point $x\in M$
the following property holds: for every $r >0$ and every $y,z \in S(x,r)$, there is
$f \in \I(x)$ such that $f(y)=z$. In other words, a homogeneous metric space $(M,d)$ is two-point homogeneous if and only if
for any point $x\in M$, the isotropy subgroup $\I(x)$ acts transitively on every sphere $S(x,r)$, $r>0$.
\end{prop}

In particular, Proposition \ref{pr:two-point homogeneous} implies

\begin{prop}\label{pr:3dim.2point.reg}
The vertex set of any regular polytope in $\mathbb{R}^3$ is $2$-point homogeneous.
\end{prop}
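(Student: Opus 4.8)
The plan is to verify the criterion of Proposition~\ref{pr:two-point homogeneous} for each of the five Platonic solids, i.e. to show that for any vertex $x$ the isotropy subgroup $\I(x)=\Symm(P)\cap\I(x)$ acts transitively on each sphere $S(x,r)$ in the vertex set. Since all five regular polyhedra are homogeneous (vertex-transitive), by Proposition~\ref{pr:two-point homogeneous} it suffices to fix one convenient vertex $x$ and examine the orbits of $\I(x)$ on the remaining vertices, grouped by their distance from $x$. The key geometric fact I would exploit is that the vertex figure of a regular $3$-polytope is a regular polygon: the vertices adjacent to $x$ form a regular $k$-gon (with $k=3,3,4,5,3$ for tetrahedron, cube, octahedron, dodecahedron, icosahedron), and $\I(x)$ contains the full symmetry group of that polygon, namely a dihedral group acting transitively on its vertices. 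This handles the sphere $S(x,r)$ for the smallest $r$ in every case.

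The main work is then to account for the vertices at the larger distances and confirm that the $\I(x)$-orbits coincide exactly with the distance spheres. First I would classify, for each solid, the distinct values of $d(x,y)$ as $y$ ranges over all other vertices; by the regularity and the fact that all vertices lie on a common sphere (Proposition~\ref{pr.efhs}), vertices at a common distance from $x$ lie on a common circle in a plane orthogonal to the axis through $O$ and $x$. I would then argue that $\I(x)$ acts transitively on each such circle. The cleanest way is to observe that $\I(x)$ already contains the rotations about the axis $Ox$ through angles that are symmetries of the vertex figure, and these rotations permute transitively the vertices on each horizontal circle, since those vertices are themselves arranged with the same rotational symmetry inherited from the vertex figure. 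For the tetrahedron there is only one nontrivial distance (all non-$x$ vertices are adjacent), so the vertex-figure argument alone suffices; for the cube, octahedron, and icosahedron one checks the two or three distance levels directly; for the dodecahedron one likewise checks the (more numerous) distance levels.

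The step I expect to be the genuine obstacle is the dodecahedron, where $x$ has $19$ other vertices distributed among several distance values, and one must be careful that each distance level really forms a single orbit under $\I(x)$ rather than splitting. Here the rotational subgroup of $\I(x)$ is cyclic of order $3$ (the three edges at $x$), which by itself is too small to act transitively on a circle carrying, say, six vertices; so I would need to adjoin the reflections in $\I(x)$ and verify that the dihedral group of order $6$, acting on the relevant circles, is transitive on each of them. This amounts to checking that the number of vertices on each distance circle divides the order of the induced dihedral action and that no two distinct circles are accidentally at equal distance from $x$ (which would merge two orbits into one sphere and again require transitivity across them). Concretely, I would use explicit coordinates for the dodecahedron, compute the squared distances $d(x,y)^2$, sort the other vertices into levels, and exhibit for each level an element of $\I(x)$ carrying any chosen vertex to any other. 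Once all five cases are dispatched, Proposition~\ref{pr:two-point homogeneous} yields $2$-point homogeneity and the proof is complete.
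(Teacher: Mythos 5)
Your proposal follows exactly the paper's route: the paper derives this proposition directly from Proposition~\ref{pr:two-point homogeneous} (the isotropy-group criterion), and the detailed orbit check you outline for the hardest case, the dodecahedron --- the dihedral isotropy group of order $6$ acting simply transitively on the two $6$-vertex distance spheres and $2$-transitively on the two $3$-vertex ones --- is precisely what the paper records later in the proof of Theorem~\ref{th:dod}. One small slip: in your list of vertex-figure degrees you swapped the dodecahedron and the icosahedron ($k=3$ for the dodecahedron and $k=5$ for the icosahedron, not the other way around), though your subsequent analysis of the dodecahedron correctly uses the rotation subgroup of order~$3$.
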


The following result on the $m$-point homogeneity could be useful in some  explicit calculations.

\begin{prop}\label{pr:m-point homogeneous}
If a metric space $(M,d)$ is $m$-point homogeneous, $m \geq 2$, then for any point $x\in M$
the following property holds: for every $r >0$ and every
$(m-1)$-tuples
$(A_1, A_2,\dots, A_{m-1})$ and $(B_1,B_2,\dots, B_{m-1})$  of elements of $S(x,r)$ such that $d(A_i,A_j)=d(B_i,B_j)$, $i,j =1,\dots, m-1$, there is
$f \in \I(x)$ such that $f(A_i)=B_i$ for all $i=1,\dots,m-1$. In other words, if a homogeneous metric space $(M,d)$ is $m$-point homogeneous,
then for any point $x\in M$, the isotropy subgroup $\I(x)$ acts $(m-1)$-point transitively on every sphere $S(x,r)$, $r>0$
{\rm(}in particular, $S(x,r)$ with the metric, induced by $d$, is $(m-1)$-point homogeneous{\rm)}.
\end{prop}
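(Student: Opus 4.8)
The plan is to derive the $(m-1)$-point transitivity of the isotropy subgroup $\I(x)$ on each sphere $S(x,r)$ directly from the assumed $m$-point homogeneity of $(M,d)$, using the center $x$ as an extra marked point. Given the two $(m-1)$-tuples $(A_1,\dots,A_{m-1})$ and $(B_1,\dots,B_{m-1})$ in $S(x,r)$ with $d(A_i,A_j)=d(B_i,B_j)$ for all $i,j$, I would form the two $m$-tuples $(A_0,A_1,\dots,A_{m-1})$ and $(B_0,B_1,\dots,B_{m-1})$ with $A_0:=x=:B_0$, and then apply Definition \ref{de:mpoint} to these extended tuples.

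The key step is to check that these two $m$-tuples have matching pairwise distance matrices, so that Definition \ref{de:mpoint} applies. The distances $d(A_i,A_j)$ and $d(B_i,B_j)$ with $i,j\geq 1$ agree by hypothesis; the distances involving the index $0$ are $d(A_0,A_i)=d(x,A_i)=r=d(x,B_i)=d(B_0,B_i)$ for $i\geq 1$, since all the $A_i$ and $B_i$ lie on $S(x,r)$; and $d(A_0,A_0)=0=d(B_0,B_0)$. Hence $m$-point homogeneity furnishes an isometry $f\in\Isom(M)$ with $f(A_i)=B_i$ for $i=0,1,\dots,m-1$. In particular $f(x)=f(A_0)=B_0=x$, so $f\in\I(x)$, and $f$ carries $(A_1,\dots,A_{m-1})$ to $(B_1,\dots,B_{m-1})$, which is exactly the claimed $(m-1)$-point transitivity of $\I(x)$ on $S(x,r)$.

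For the final ``in particular'' assertion, I would observe that every $f\in\I(x)$ preserves each sphere $S(x,r)$ (it fixes $x$ and preserves distances), so its restriction $f|_{S(x,r)}$ is an isometry of $S(x,r)$ with the metric induced by $d$. The transitivity just established therefore takes place inside $\Isom(S(x,r))$, which is precisely the statement that $S(x,r)$ is $(m-1)$-point homogeneous. I do not expect any genuine obstacle here: the entire argument is the observation that fixing the common center is encoded by adjoining it as the zeroth coordinate of the tuples, and the only thing that must be verified is the equality of the two distance matrices, which is immediate from the memberships $A_i,B_i\in S(x,r)$.
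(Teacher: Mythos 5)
Your proof is correct and is essentially identical to the paper's own argument: both adjoin the center $x$ to the two $(m-1)$-tuples (you as the zeroth point, the paper as the $m$-th), verify the extended distance matrices agree, apply $m$-point homogeneity, and conclude the resulting isometry fixes $x$ and hence lies in $\I(x)$. Your explicit verification of the ``in particular'' clause is a minor elaboration the paper leaves implicit, but the route is the same.
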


\begin{proof}
We can extend every $(m-1)$-tuples
$(A_1, A_2,\dots, A_{m-1})$ and $(B_1,B_2,\dots, B_{m-1})$  of elements of $S(x,r)$ such that $d(A_i,A_j)=d(B_i,B_j)$, $i,j =1,\dots, m-1$,
to $m$-tuples adding the points $A_m=B_m=x$. Since $d(x,A_i)=d(x,B_i)=r$, $i=1,\dots,m-1$, and $(M,d)$ is $m$-point homogeneous,
there is an isometry $f \in \Isom(M)$  with the following property: $f(A_i)=B_i$, $i=1,\dots, m$. Since $A_m=B_m=x$ then $f \in \I(x)$, hence, $f$ an isometry of $S(x,r)$.
\end{proof}

\begin{remark}
It should be noted that it could be some isometries of a given sphere $S(x,r)$ that are not generated by the elements of the isotropy subgroup $\I(x)$.
\end{remark}

\begin{lemma}
\label{inversion}
Let $P$ be a $n$-dimensional convex centrally symmetric polytope with the vertex set $M$ inscribed in the sphere $S^{n-1}(O,r)$
with the center in the origin of $\mathbb{R}^n$, $M_1$, $M_2$ be some ordered subsets in $M$, for which there exists an isometry $f$
of the polytope $P$ and the sphere $S(O,r)$ on themselves such that $f(M_1)=M_2$, $v\in M_1$, $w\in M_2$, $f(v)=w$. Then
$$
f(M_1\cup \{-v\})=    M_2\cup \{-w\},\quad f[(M_1-\{v\})\cup \{-v\}]=(M_2-\{w\})\cup \{-w\}.
$$
\end{lemma}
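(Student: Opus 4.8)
The plan is to reduce the whole statement to the single observation that $f$ is a \emph{linear} (orthogonal) map, and therefore commutes with the central symmetry $x\mapsto -x$. First I would establish that $f$ fixes the origin: since $f$ maps the sphere $S^{n-1}(O,r)$ onto itself, it must send the center of that sphere to the center, i.e. $f(O)=O$. (Equivalently, $f$ is a symmetry of $P$, whose barycenter is $O$, so $f(O)=O$ as recalled just before Definition~\ref{de_poly_hom}.) An isometry of $\mathbb{R}^n$ fixing the origin lies in $O(n)$ and is thus linear; in particular $f(-x)=-f(x)$ for every $x\in\mathbb{R}^n$.

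Next I would record two consequences of central symmetry and linearity. Because $P$ is centrally symmetric we have $M=-M$, so $-v\in M$ and $-w=-f(v)=f(-v)\in f(M)=M$; hence every set appearing in the statement is genuinely a subset of $M$ and the claim is well posed. From $f(v)=w$ and linearity we get the key relation $f(-v)=-w$.

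With this in hand both identities are immediate. For the first, $f(M_1\cup\{-v\})=f(M_1)\cup\{f(-v)\}=M_2\cup\{-w\}$, using $f(M_1)=M_2$. For the second, since $f$ is a bijection restricting to a bijection $M_1\to M_2$ with $f(v)=w$, we have $f(M_1\setminus\{v\})=M_2\setminus\{w\}$; combining this with $f(-v)=-w$ yields $f\bigl[(M_1\setminus\{v\})\cup\{-v\}\bigr]=(M_2\setminus\{w\})\cup\{-w\}$.

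The only content-bearing step is the very first one, namely showing that $f$ is linear and hence intertwines with negation; once that is secured, the two conclusions are routine bookkeeping about images of finite sets under a bijection. I expect no real obstacle beyond taking care that the ordered/unordered distinction between the $M_i$ does not affect these set-level image computations (it does not, since $f$ respects both membership and the induced ordering).
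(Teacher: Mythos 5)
Your proposal is correct and follows essentially the same route as the paper: the entire content is the observation that $f$ fixes the origin and is therefore linear, giving $f(-v)=-f(v)=-w$, after which both identities are routine set-image bookkeeping. The paper's proof is just a compressed version of yours (it states $f(-v)=-f(v)=-w$ and derives both equalities from it), so there is no substantive difference.
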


\begin{proof}
Clearly, $f(-v)=-f(v)=-w.$ It implies the first equality of the lemma. The second equality is a consequence of the first one.
\end{proof}

\begin{theorem}\label{th:3dist_symm}
Let $P$ be a polytope in $\mathbb{R}^n$, $n\geq 2$, such that its vertex set $M$ is homogeneous,
symmetric with respect to the center of $P$, and the distances between distinct vertices of $P$
constitute a 3-element set, say $\{d_1, d_2, d_3\}$, where $0<d_1<d_2<d_3$.
If for some vertex $v \in M$ the isotropy subgroup $\I(v)$ acts $(m-1)$-point transitively on the sphere $S(v,d_1)$, i.~e.
$S(v,d_1)$ is $(m-1)$-point homogeneous under the action of $\I(v)$
for some $m \geq 2$, then $M$ is $m$-point homogeneous.
\end{theorem}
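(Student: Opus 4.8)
The plan is to reduce the $m$-point homogeneity of $M$ to the assumed $(m-1)$-point transitivity of $\I(v)$ on $S(v,d_1)$, exploiting central symmetry to transport the ``extra'' sphere $S(v,d_2)$ onto $S(v,d_1)$ by the antipodal map. First I would normalize so that the center of $P$ is the origin $O$ and all vertices lie on $S(O,r)$; this is legitimate by Proposition \ref{pr.efhs}, which also gives $\Isom(M)\subset O(n)$, so every element of $\I(v)$ is the restriction of a linear orthogonal map fixing $O$ and $v$, hence fixing the antipode $-v$. Central symmetry guarantees $-v\in M$, and $d(v,-v)=2r$ is the diameter of $S(O,r)$; therefore $d_3=2r$ and $S(v,d_3)=\{-v\}$.

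The structural core is the identity $d(v,w)^2+d(v,-w)^2=4r^2=d_3^2$ for every vertex $w$, which is the parallelogram law for $|v|=|w|=r$. For $w\neq\pm v$, both $d(v,w)$ and $d(v,-w)$ lie in $\{d_1,d_2\}$, so the sum of their squares equals one of $2d_1^2$, $d_1^2+d_2^2$, $2d_2^2$; being constantly $d_3^2$, exactly one of these holds for all such $w$. I would rule out $d_3^2=2d_1^2$ and $d_3^2=2d_2^2$: either would force every vertex other than $\pm v$ onto a single sphere $S(v,d_1)$ or $S(v,d_2)$, so that one of $d_1,d_2$ would fail to occur as a distance from $v$ --- impossible, since homogeneity makes the set of distances realized from $v$ equal to the full set $\{d_1,d_2,d_3\}$. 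Hence $d_3^2=d_1^2+d_2^2$, and the antipodal map $w\mapsto -w$ is a bijection $S(v,d_1)\leftrightarrow S(v,d_2)$.

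With this the main step is routine. Given $m$-tuples $(A_i)$ and $(B_i)$ with $d(A_i,A_j)=d(B_i,B_j)$, I would first apply homogeneity of $M$ to assume $A_1=B_1=v$; it then suffices to produce $f\in\I(v)$ with $f(A_i)=B_i$ for $i\geq2$, since composing with the isometry that fixed $A_1$ gives the required map. Because $d(v,A_i)=d(v,B_i)$, the indices $i\geq2$ split into three classes: those with $A_i=B_i=-v$ (handled automatically, as every $f\in\I(v)$ fixes $-v$), those with $A_i,B_i\in S(v,d_1)$, and those with $A_i,B_i\in S(v,d_2)$. For the last class I replace $A_i,B_i$ by $-A_i,-B_i\in S(v,d_1)$. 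Using $d(A_i,-A_j)^2=4r^2-d(A_i,A_j)^2$ and $d(-A_i,-A_j)=d(A_i,A_j)$, one checks the two resulting tuples in $S(v,d_1)$ still have matching pairwise distances; they have at most $m-1$ entries, so padding with repeated points as in Proposition \ref{pr:m-point hom_obv} and invoking the $(m-1)$-point transitivity of $\I(v)$ on $S(v,d_1)$ yields a single $f\in\I(v)$ carrying the first tuple to the second. Linearity of $f$ gives $f(-A_i)=-f(A_i)$, so $f(A_i)=B_i$ on the $S(v,d_2)$-class as well, completing the construction.

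I expect the main obstacle to be the structural dichotomy of the second paragraph: proving that the only configuration consistent with three distance values and central symmetry is $d_3^2=d_1^2+d_2^2$, with the antipodal map interchanging $S(v,d_1)$ and $S(v,d_2)$. Once this is established, the remaining distance bookkeeping after the antipodal replacement, and the check that the constructed $f$ acts correctly, are direct consequences of orthogonality of $f$ and the single identity $d(v,w)^2+d(v,-w)^2=d_3^2$.
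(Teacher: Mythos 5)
Your proof is correct and takes essentially the same route as the paper's: both reduce to the isotropy subgroup $\I(v)$, transport the tuple entries lying in $S(v,d_2)\cup\{-v\}$ into $S(v,d_1)\cup\{v\}$ by the antipodal map, apply the assumed $(m-1)$-point transitivity there, and undo the negations using linearity of the orthogonal isometry (which the paper packages as Lemma~\ref{inversion}). The only difference is one of detail: you explicitly establish, via the parallelogram law and homogeneity, the dichotomy $d_3^2=d_1^2+d_2^2$ and the antipodal bijection $S(v,d_1)\leftrightarrow S(v,d_2)$, a structural fact the paper passes over as ``clear.''
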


\begin{proof}
Let $M_1$, $M_2$ be arbitrary ordered subsets in $M$ consisting of $m$ vertices such that the distance between two arbitrary points in
$M_1$ is equal to the distance between corresponding two points in $M_2$. Due to the homogeneity
of $M$, we may assume that the first vertices in $M_1$ and $M_2$ are both equal
to $v$.

If $M_1\subset S(v,d_1)\cup \{v\}$, then the same is true for $M_2$
and the conditions of theorem imply that there is $f\in \I(v)$ such that
$f(M_1)=M_2$.

Otherwise, $M_1=M'_1\cup M''_1$ and $M_2=M'_2\cup M''_2$, where $M'_i\subset S(v,d_1)\cup \{v\}$
and $M''_i\subset S(v,d_2)\cup \{-v\}$, $i=1,2$. It is clear that the ordered sets $M'_1\cup (-M''_1)$ and $M'_2\cup (-M''_2)$ of vertices in
$M$ are mutually isometric and both are contained in $S(v,d_1)\cup \{v\}$.
Conditions of the theorem imply that there is $f\in \I(v)$ such that $f(M'_1\cup (-M''_1))=M'_2\cup (-M''_2)$.
Applying several times the second assertion of Lemma \ref{inversion} to the subsets
$-M''_1$ and $-M''_2$, we obtain $f(M_1)=M_2$. Hence, $M$ is $m$-point homogeneous.
\end{proof}

\begin{remark} It is clear that the sphere $S(v,d_1)$ could be replaced by the sphere $S(v,d_2)$ in the statement of Theorem \ref{th:3dist_symm}.
Indeed, $S(v,d_1)$ is the vertex figure for the vertex $v$, while $S(v,d_2)$ is the vertex figure for the vertex $-v$.
\end{remark}

\begin{quest}\label{qu:3dist_symm}
What is the classification of polytopes $P$ in $\mathbb{R}^n$, $n\geq 2$, such that its vertex set $M$ is homogeneous,
symmetric with respect to the center of $P$, and the distances between distinct vertices of $P$
constitute a 3-element set?
\end{quest}

\begin{lemma}
\label{distsph}
For any subset of the sphere
$\{v_1,\dots,v_n\}\subset S^{n-1}(0,1)\subset \mathbb{R}^n$, where $n\geq 2$,
lying in no great subsphere of the sphere $S^{n-1}(0,1)$ of less dimension,
every point $w\in S^{n-1}(0,1)$ is uniquely determined by its distances to the points $v_1,\dots,v_n$.
\end{lemma}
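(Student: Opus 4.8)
The plan is to convert the distance data into linear data via the law-of-cosines identity, and then to read off uniqueness from the hypothesis that the $v_i$ span $\mathbb{R}^n$.

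First I would observe that since $w$ and each $v_i$ lie on the unit sphere, $\|w\|=\|v_i\|=1$, and hence
\[
\|w-v_i\|^2 = \|w\|^2 - 2(w,v_i) + \|v_i\|^2 = 2 - 2(w,v_i).
\]
Therefore knowing the distance $\|w-v_i\|$ is equivalent to knowing the inner product $(w,v_i)$, for each $i=1,\dots,n$. Thus it suffices to show that a point $w\in S^{n-1}(0,1)$ is uniquely determined by the $n$ numbers $(w,v_1),\dots,(w,v_n)$.

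Next I would interpret the hypothesis. The maximal proper great subspheres of $S^{n-1}(0,1)$ are exactly its intersections with hyperplanes through the origin, and every great subsphere of smaller dimension is contained in one of these. Hence the assumption that $\{v_1,\dots,v_n\}$ lies in no great subsphere of lesser dimension means precisely that no linear hyperplane through the origin contains all the $v_i$; equivalently, the vectors $v_1,\dots,v_n$ span $\mathbb{R}^n$, and since there are exactly $n$ of them, they form a basis.

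Finally, suppose $w,w'\in S^{n-1}(0,1)$ have equal distances to every $v_i$. By the first step $(w,v_i)=(w',v_i)$, so $(w-w',v_i)=0$ for all $i$. Since the $v_i$ span $\mathbb{R}^n$, the vector $w-w'$ is orthogonal to all of $\mathbb{R}^n$, forcing $w-w'=0$, i.e. $w=w'$. I expect no genuine obstacle here: the only point requiring care is the translation of the geometric condition "no smaller great subsphere" into the linear-algebraic spanning condition, after which uniqueness is immediate. It is worth noting that the argument in fact shows $w$ is determined as a point of $\mathbb{R}^n$, so the restriction $w\in S^{n-1}(0,1)$ is not even needed for uniqueness.
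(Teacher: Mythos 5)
Your proof is correct and follows essentially the same route as the paper: both convert the distance data into the inner products $(w,v_i)$ and then exploit the fact that $v_1,\dots,v_n$ form a basis of $\mathbb{R}^n$. The only cosmetic difference is that the paper reconstructs $w$ explicitly by inverting the Gram matrix $Gr(v)=(v_i\cdot v_j)$, whereas you deduce uniqueness by noting that $w-w'$ is orthogonal to a spanning set; these are the same linear-algebraic fact.
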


\begin{proof}
Let us denote the spherical distances from the point $w$ to the points $v_i$, $i=1,\dots, n$, by $\varphi_i$.
We consider the points $v_1,\dots,v_n$ and $w$ as unit vectors in $\mathbb{R}^n$. Due to the conditions of the lemma, the first $n$ vectors are linearly independent.
It is accepted to call the inner products $w_i:=w\cdot v_i= \cos\varphi_i$, $i=1,\dots n$, as the
{\it covariant} components of the vector $w$ in the basis $v=(v_1,\dots,v_n)$. Then
$$
(w_1,\dots,w_n)^t=Gr(v)(w^1,\dots,w^n)^t,
$$
where the symbol $^t$ denotes the transposition of matrices,
$w=\sum_{i=1}^nw^iv_i,$ $Gr(v)=(v_i\cdot v_j)$ is nondegenerate {\it Gram matrix} of the basis $v$,
$$
(w^1,\dots,w^n)^t=Gr(v)^{-1}(w_1,\dots,w_n)^t= Gr(v)^{-1}(\cos\varphi_1,\dots, \cos\varphi_n)^t.
$$
The lemma is proved.
\end{proof}

Let us assume that the sphere $S^{n-1}(O,1)$ of radius $1$ centered at the origin $O\in\mathbb{R}^n$
is the circumscribed sphere for all homogeneous $n$-dimensional polyhedra under consideration, and the spherical distance is used as the distance.

\begin{theorem}\label{th:3max}
Let $n\geq 3$ and $P$ be $n$-dimensional convex $(n-1)$-point homogeneous polytope in $\mathbb{R}^n$ with the vertex set $M$ of cardinality
$k\geq n+1$. Then $P$ is $k$-point homogeneous if and only if for any two isometric non-degenerate ordered $n$-tuples of vertices $M_1$ and
$M_2$ in $M$ with the first points $v$, there exists an isometry $f$ from the isotropy subgroup $\I(v)$ such that $f(M_1)=M_2$. If $P$ is also centrally symmetric
then $M$ in this statement can be replaced by an arbitrary maximal by inclusion subset $N\subset M\cap B(v,\pi/2)$, containing no diametrically opposite vertices.
\end{theorem}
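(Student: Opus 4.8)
The plan is to reformulate $m$-point homogeneity in terms of partial isometries and then collapse everything onto a single non-degenerate $n$-tuple by means of Lemma \ref{distsph}. Note first that $(M,d)$ is $m$-point homogeneous exactly when every distance-preserving bijection $\phi\colon S_1\to S_2$ between subsets $S_1,S_2\subset M$ with $|S_1|\le m$ extends to some $f\in\Isom(M)$: a repeated point in a tuple just shrinks the underlying set, so the tuple formulation of Definition \ref{de:mpoint} and this partial-isometry formulation coincide. Since $k=|M|$, proving that every distance-preserving bijection between subsets of $M$ extends to an isometry is the same as proving $k$-point homogeneity, and by Proposition \ref{pr:m-point hom_obv} $m$-point homogeneity for all $m$. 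The forward implication is then immediate: if $P$ is $k$-point homogeneous, then, since $k\ge n+1$, Proposition \ref{pr:m-point hom_obv} gives $n$-point homogeneity, so two isometric ordered $n$-tuples are carried onto each other by some $f\in\Isom(M)$, and if both start at $v$ then $f(v)=v$, i.e. $f\in\I(v)$; non-degeneracy is not even needed here.

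For the converse the key computational device is that, as all vertices lie on $S^{n-1}(O,1)$, the Gram matrix $(u_i\cdot u_j)$ of any subtuple is determined by the pairwise spherical distances alone (diagonal $1$, off-diagonal the cosines), so $\phi$ preserves Gram matrices and therefore preserves both distances and linear (in)dependence: a subtuple is non-degenerate iff its $\phi$-image is. Using homogeneity I reduce to $\phi(v)=v$ for a chosen $v\in S_1$ and look for $f\in\I(v)$ extending $\phi$. Set $d=\dim\Lin(S_1)$ and pick a linearly independent $(v=u_1,u_2,\dots,u_d)\subset S_1$. If $d=n$ the hypothesis furnishes $f\in\I(v)$ with $f(u_i)=\phi(u_i)$, because $(\phi(u_1),\dots,\phi(u_n))$ is again non-degenerate; if $d<n$ then $d\le n-1$ and the assumed $(n-1)$-point homogeneity furnishes such an $f$. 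In both cases $f$ is orthogonal with $f(\Lin(S_1))=\Lin(S_2)$, and for each $x\in S_1$ the points $f(x)$ and $\phi(x)$ lie on the round sphere $\Lin(S_2)\cap S^{n-1}(O,1)$ with equal distances to the spanning family $\phi(u_1),\dots,\phi(u_d)$; Lemma \ref{distsph}, applied inside $\Lin(S_2)\cong\mathbb{R}^d$, forces $f(x)=\phi(x)$. Thus $f$ extends $\phi$.

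For the centrally symmetric refinement I would pass from $M$ to $N$ by antipodal folding, in the spirit of Theorem \ref{th:3dist_symm} and Lemma \ref{inversion}: given a non-degenerate $n$-tuple starting at $v$, replace each vertex $w$ by the unique representative of $\{w,-w\}$ in $N$. This is well defined and fixes $v$, because $N\supset M\cap U(v,\pi/2)$ and omits exactly one vertex of each antipodal pair. The decisive observation is that $\phi$ preserves the distance to $v$, hence the three classes $w\cdot v>0$, $w\cdot v<0$, $w\cdot v=0$; so away from the equator the two tuples are folded with the same sign, the folded tuples are isometric $N$-tuples, the $N$-version of the condition yields $f\in\I(v)$ matching them, and repeated use of the second assertion of Lemma \ref{inversion} unfolds this to a matching of the original tuples. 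I expect the main obstacle to be precisely the equatorial vertices in $S(v,\pi/2)$: there both $w$ and $-w$ lie in $B(v,\pi/2)$, the choice built into $N$ may fold the two tuples inconsistently, and naive folding need not preserve isometry. The plan to overcome this is to exploit the slack left by Lemma \ref{distsph} — only one non-degenerate spanning $n$-subtuple has to be matched, after which $\phi$ on all remaining (possibly equatorial) vertices is forced automatically — by selecting that spanning subtuple among the consistently folded vertices and invoking Lemma \ref{inversion} to absorb the residual antipodal ambiguities on the equator.
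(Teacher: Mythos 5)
Your treatment of the first statement is correct and is essentially the paper's own proof: reduce an arbitrary isometric pair of subsets to a linearly independent subtuple through $v$, match that subtuple by the hypothesis when it spans $\mathbb{R}^n$ and by the assumed $(n-1)$-point homogeneity when it spans a proper subspace, then let Lemma \ref{distsph} force $f=\phi$ on all remaining points; your Gram-matrix remark only makes explicit what the paper uses implicitly. The problem lies entirely in the centrally symmetric refinement.

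There, your folding is the wrong one, and your repair does not close the gap you yourself identified. You fold each tuple separately onto its canonical $N$-representatives, which (as you note) destroys the isometry between the two tuples precisely at equatorial vertices folded with opposite signs. Your fix --- select the spanning subtuple among the consistently folded, non-equatorial vertices --- is not available in general: in the second statement the hypothesis concerns non-degenerate $n$-tuples themselves, and such a tuple may consist of $v$ together with $n-1$ vertices all lying on $S(v,\pi/2)$ (for instance $(e_1,e_2,\dots,e_n)$ in the $n$-orthoplex), so every spanning subtuple necessarily contains equatorial vertices and Lemma \ref{distsph} leaves no slack to exploit. Nor can Lemma \ref{inversion} absorb the residual ambiguity: that lemma only transports an isometry $f$ \emph{already found} for one pair to an antipodally modified pair; it cannot produce $f$ when the folded pair is no longer isometric, so the $N$-form of the hypothesis simply never gets applied. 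The paper's mechanism is different, and it is the step your plan is missing: it flips same-index vertices of the two tuples \emph{simultaneously} (both or neither), so that the pair stays isometric at every stage, and it arranges only the first folded tuple $M''_1$ to lie in $N$, explicitly allowing the second folded tuple $M''_2$ to end up outside $N$ (this is what the paper means when it says $M''_2$ can be ignored). In other words, the paper proves and uses the refinement in the weaker form in which only the \emph{first} tuple is required to lie in $N$. Under the stronger reading you adopted (both tuples in $N$ --- which is, in fact, how the theorem is applied in Proposition \ref{pr:co}), an additional ingredient is needed to move the second tuple into $N$, namely an element of $\I(v)$ interchanging equatorial vertices with their antipodes (for the cuboctahedron, the rotation by $\pi$ about the axis $Ov$); neither your plan nor, strictly speaking, the paper's written proof supplies this in general.
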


\begin{proof}
 We start with proving of the first statement.
The necessity is evident. Let us prove the sufficiency.

Let $M'$, $M''$ be two isometric ordered subsets in $M$ of cardinality $m$, where $n\leq m\leq k$.
There exists a smallest number $l\geq 2$ such that $M'$ (respectively, $M''$) lies in some vector subspace $V'_l$ (respectively, $V''_l$) of dimension $l$ in
$\mathbb{R}^n$.

Assume at first that $l\leq n-1$. Then there exist an $l$-point ordered subset  $M'_l$ in $M'$ and a subset $M''_l$ in $M''$ with the same numbers as
$M'_l$ such that the radius-vectors of the vertices in every of these subsets constitute a basis in $V'_l$ and $V''_l$ respectively.
Since $P$ is $l$-point homogeneous, then there exists a self-isometry $f$ of the polytope $P$  such that
$f(M'_l)=M''_l$. Hence, due to Lemma \ref{distsph},
applied to the spheres $S^{l-1}(0,1)\subset V'_l$ and
$S^{l-1}(0,1)\subset V''_l$, containing respectively the sets $M'$ and $M''$, we obtain $f(M')=M''$.

Assume now that $l=n$. Then there exist an $n$-point ordered subset $M'_l$ in $M'$ and the subset $M''_l$ in $M''$ with the same numbers as $M'_l$, such that the
radius-vectors of the vertices of every such subset constitute bases in
$\mathbb{R}^n$. By the homogeneity of $P$ we may suppose that the first points of subsets $M'_l$ and $M''_l$ coincide with the vertex $v\in M$.
By assumption,
there exists a self-isometry $f\in \I(v)$ of the polytope $P$ such that  $f(M'_l)=M''_l$.
Then by Lemma \ref{distsph}, applied to the sphere $S^{n-1}(0,1)\subset \mathbb{R}^n$, we get $f(M')=M''$.

Let the polytope $P$ be centrally symmetric and $N\subset M\cap B(v,\pi/2)$, where $v\in M$,
is some maximal by inclusion set containing no pair of diametrically opposite vertices. The triangle inequality imply that
$M\cap U(v,\pi/2)\subset N$, and $M\cap U(v,\pi/2)= N$ if and only if
$M\cap S(v,\pi/2)=\emptyset$.

Let $M_1$ and $M_2$ be two isometric nondegenerate ordered $n$-tuples of vertices in $M$. It is clear that every of sets $M_1$ and $M_2$ contains no pairs
of diametrically opposite vertices. The vertices with the same number in  $M_1$ and $M_2$ simultaneously belong or not belong to $M\cap B(v,\pi/2)$
and we will change every pair of vertices of the same name, which do not belong
to this set, by the diametrically opposite pair.
As a result, we obtain (possibly, new) isometric and nondegenerate $n$-tuples of vertices $M'_1$ and $M'_2$ of the polytope $P$, containing in $M\cap B(v,\pi/2)$.

If $M\cap S(v,\pi/2)=\emptyset$, then we set $M''_1=M'_1$ and $M''_2=M'_2$.

If not, then changing, if necessary, some vertices of the same name at $M'_1$ and
$M'_2$ into diametrically opposite ones, we obtain isometric and non-degenerate  $n$-tuples of vertices $M''_1$ and $M''_2$, so that  $M''_1\subset N$.

Due to the assumption of the theorem, there exists a self-isometry $f\in \I(v)$ of the polytope $P$ such that $f(M_1)=M_2$. Moreover, due to Lemma \ref{inversion},
the last equality is equivalent to $f(M''_1)=M''_2.$

The above arguments give the proof of the second part of the theorem,
regardless of whether $M''_2$ is contained in $N$ or not; in the latter case, $M''_2$ can be ignored, leaving only $M''_1\subset N$.
\end{proof}

\begin{corollary}\label{co:if-homog}
Every $n$-dimensional convex $n$-point homogeneous polytope in $\mathbb{R}^n$ which has the vertex set $M$ with the cardinality $k\geq n+1$, is $k$-point homogeneous.
\end{corollary}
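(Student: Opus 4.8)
The plan is to obtain this as a direct consequence of Theorem~\ref{th:3max}, whose biconditional reduces $k$-point homogeneity to a single transitivity condition for the isotropy subgroup. First I would record that the hypotheses of that theorem are in force: if $P$ is $n$-point homogeneous with $n\geq 3$, then by Proposition~\ref{pr:m-point hom_obv} it is also $(n-1)$-point homogeneous, so $P$ is an $n$-dimensional convex $(n-1)$-point homogeneous polytope with vertex set of cardinality $k\geq n+1$, which is exactly the setting of Theorem~\ref{th:3max}.

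Next I would verify the condition appearing on the right-hand side of the biconditional in Theorem~\ref{th:3max}. Let $M_1=(v,A_2,\dots,A_n)$ and $M_2=(v,B_2,\dots,B_n)$ be two isometric non-degenerate ordered $n$-tuples of vertices of $P$ sharing the same first point $v$. Isometry of the tuples means $d(A_i,A_j)=d(B_i,B_j)$ for all $i,j$, where $A_1=B_1=v$. Applying the $n$-point homogeneity of $P$ to these two $n$-tuples produces an isometry $f\in\Isom(M)$ with $f(A_i)=B_i$ for every $i$; in particular $f(v)=f(A_1)=B_1=v$, so $f\in\I(v)$ and $f(M_1)=M_2$. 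This is precisely the condition required by Theorem~\ref{th:3max}, which therefore yields that $P$ is $k$-point homogeneous.

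The argument is short because all the real work sits in Theorem~\ref{th:3max}; the only point demanding attention is the restriction $n\geq 3$ inherited from that theorem, so I expect the main (minor) obstacle to be the bookkeeping of the low-dimensional cases needed for the statement to hold as written for every $n$. For $n=1$ the polytope is a segment with $k=2$ vertices and the claim is immediate from homogeneity. For $n=2$ one argues directly in the spirit of Theorem~\ref{th:3max}: given isometric ordered $m$-tuples, either they lie on a line through the origin, in which case they contain at most two (antipodal) vertices and $2$-point homogeneity applies outright, or they span $\mathbb{R}^2$, in which case choosing two linearly independent vertices and their counterparts lets $2$-point homogeneity supply the matching isometry, and Lemma~\ref{distsph} (with $n=2$) forces that isometry to carry the whole tuple over.
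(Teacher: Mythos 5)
Your proposal is correct and follows exactly the route the paper intends: the corollary is stated as an immediate consequence of Theorem~\ref{th:3max}, obtained precisely by your observation that $n$-point homogeneity supplies both the $(n-1)$-point homogeneity hypothesis (via Proposition~\ref{pr:m-point hom_obv}) and the isotropy condition (an isometry matching two $n$-tuples with common first point $v$ automatically lies in $\I(v)$). Your separate treatment of $n=1$ and $n=2$ is additional care the paper silently omits --- even though it later invokes the corollary with $n=2$ in Example~\ref{ex:reg_polygon.1} --- and your direct argument there, using $2$-point homogeneity plus Lemma~\ref{distsph}, is sound.
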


\section{Examples}\label{sec.3}

Let us start with a very simple example.

\begin{example}\label{ex:reg_n_sim}
Any regular $n$-dimensional simplex $S$ is $(n+1)$-point homogeneous.
Indeed, the distances between any vertices $S$ coincide and the symmetry group of  $S$ is the group~$S_{n+1}$ of all permutations of the vertex set.
\end{example}

\begin{example}\label{ex:reg_polygon.1}
Any regular $m$-gon $P$ in $\mathbb{R}^2$, $m \geq 3$,  is $m$-point homogeneous.
Indeed, it is clear that $P$ is $2$-point homogeneous. Now the statement follows from Corollary \ref{co:if-homog}.
\end{example}

\begin{example}\label{ex:3-cube}
The vertex set $M$ of the $3$-cube $P$ is (at least) $4$-point homogeneous.
There are three possible  distances between distinct vertices of $P$, $0<d_1<d_2 <d_3$, and $P$ has the center of symmetry.
Moreover, for any vertex $v$ we see that  $S(v,d_1)$ is the vertex set of a regular triangle.
The isotropy subgroup $\I(v)$ acts $3$-point transitively on $S(v,d_1)$. Therefore, $M$ is $4$-point homogeneous by Theorem \ref{th:3dist_symm}.
\end{example}

\begin{example}\label{ex:icos}
The vertex set $M$ of the icosahedron $P$ is (at least) $6$-point homogeneous.
There are three possible  distances between distinct vertices of $P$, $0<d_1<d_2 <d_3$, and $P$ has the center of symmetry.
Next, for any vertex $v$ we see that  $S(v,d_1)$ is the vertex set of a regular pentagon. The isotropy subgroup $\I(v)$ contains all
isometries of this pentagon, hence $\I(v)$  acts $5$-point transitively on $S(v,d_1)$ (see Example \ref{ex:reg_polygon.1}).
Therefore, $M$ is $6$-point homogeneous
by Theorem \ref{th:3dist_symm}.
\end{example}

Corollary \ref{co:if-homog} together with  Examples \ref{ex:3-cube} and \ref{ex:icos} imply

\begin{corollary}\label{co: homsimplepol}
The $3$-dimensional cube is $8$-point homogeneous.
The icosahedron is $12$-point homogeneous.
\end{corollary}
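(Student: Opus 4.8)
The plan is to deduce both assertions by feeding the results of Examples \ref{ex:3-cube} and \ref{ex:icos} into Corollary \ref{co:if-homog}. The key observation is that both polyhedra live in $\mathbb{R}^3$, so in the language of Corollary \ref{co:if-homog} we have $n=3$, and the number of vertices is $k=8$ for the cube and $k=12$ for the icosahedron, both comfortably satisfying $k\geq n+1=4$.

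First I would invoke Proposition \ref{pr:m-point hom_obv} to reduce each case to the verification of $3$-point homogeneity. Example \ref{ex:3-cube} shows that the cube is (at least) $4$-point homogeneous, and Example \ref{ex:icos} shows that the icosahedron is (at least) $6$-point homogeneous; in either case, $m$-point homogeneity for $m\geq 3$ implies $3$-point homogeneity by the first assertion of Proposition \ref{pr:m-point hom_obv}, so both polyhedra are in particular $3$-point homogeneous.

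Second, since $n=3$, this $3$-point homogeneity is precisely the ``$n$-point homogeneous'' hypothesis required by Corollary \ref{co:if-homog}. Applying that corollary with $k=8$ to the cube yields that the cube is $8$-point homogeneous, and applying it with $k=12$ to the icosahedron yields that the icosahedron is $12$-point homogeneous. Taken together, these two applications give exactly the two claims of the corollary.

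I do not expect any genuine obstacle here: the substantive content has already been established in the cited example and corollary, and the present statement is merely a specialization of Corollary \ref{co:if-homog} to two concrete three-dimensional polyhedra. The only points requiring any attention are the bookkeeping ones, namely matching the ambient dimension $n=3$ to the ``$n$-point homogeneous'' hypothesis and confirming the trivial inequality $k\geq n+1$ in each of the two cases.
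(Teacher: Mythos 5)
Your proposal is correct and is essentially the paper's own argument: the paper derives this corollary exactly by combining Examples \ref{ex:3-cube} and \ref{ex:icos} (which give $4$- and $6$-point homogeneity via Theorem \ref{th:3dist_symm}) with Corollary \ref{co:if-homog}, just as you do. Your explicit invocation of Proposition \ref{pr:m-point hom_obv} to step down to $3$-point homogeneity, and your check of $k\geq n+1$, merely spell out bookkeeping the paper leaves implicit.
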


\begin{corollary}\label{co:ortho-homog}
The $n$-orthoplex in $\mathbb{R}^n$ is $2n$-point homogeneous for every $n\in \mathbb{N}$.
\end{corollary}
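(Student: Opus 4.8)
The plan is to argue directly at the level of the symmetry group, which in fact yields $m$-point homogeneity for every $m$ at once (and in particular the asserted $2n$-point homogeneity; since the vertex set $M$ has cardinality $2n$, by Proposition \ref{pr:m-point hom_obv} these two statements are equivalent). I would realize the $n$-orthoplex $P$ as the convex hull of the $2n$ points $\pm b\,e_i$, $i=1,\dots,n$, with $\Symm(P)=\mathbb{Z}_2^n\rtimes S_n$ acting as signed permutations of the coordinate axes. A one-line computation gives exactly two distinct distances between vertices: $d(b e_i,-b e_i)=2b=:d_2$ for an antipodal pair on a common axis, and $d(\varepsilon b e_i,\varepsilon' b e_j)=b\sqrt{2}=:d_1$ whenever $i\neq j$. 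Thus the distance between two vertices records precisely whether they coincide (distance $0$), are antipodal on one axis (distance $d_2$), or lie on different axes (distance $d_1$).

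Given ordered tuples $(A_1,\dots,A_m)$ and $(B_1,\dots,B_m)$ of vertices with $d(A_i,A_j)=d(B_i,B_j)$ for all $i,j$, I would first extract from the common distance matrix the relation \emph{$A_i$ and $A_j$ lie on the same axis}, which holds exactly when $d(A_i,A_j)\in\{0,d_2\}$; this is an equivalence relation on the index set $\{1,\dots,m\}$, and by hypothesis it is the same partition for the $B$-tuple. Writing each $A_i=\varepsilon_i b\,e_{k(i)}$ and $B_i=\eta_i b\,e_{l(i)}$, the indices in a fixed class share a single $A$-axis and a single $B$-axis, so the assignment $e_k\mapsto e_l$ is a well-defined map between the axes used by the two tuples. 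It is injective, because indices in different classes satisfy $d=d_1$, which forces their $B$-images onto different axes as well. I would extend this bijection (between equally many used axes) to a permutation $\sigma\in S_n$ of all $n$ axes arbitrarily.

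It then remains to fix the signs: for each used axis $k$ I must choose $s_k\in\{+1,-1\}$ with $s_k\varepsilon_i=\eta_i$ for every index $i$ in the corresponding class. The key point is consistency: within a class two indices $i,j$ satisfy $\varepsilon_i=\varepsilon_j$ exactly when $d(A_i,A_j)=0$ and $\varepsilon_i=-\varepsilon_j$ exactly when $d(A_i,A_j)=d_2$, and the equal distances force the identical sign pattern among the $\eta$'s, so $\eta_i/\varepsilon_i$ is constant on the class and defines $s_k$. Setting $s_k=+1$ on the unused axes, the resulting signed permutation $f\in\mathbb{Z}_2^n\rtimes S_n=\Symm(P)=\Isom(M,d)$ satisfies $f(A_i)=B_i$ for all $i$.

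The only things requiring care are the well-definedness and injectivity of the axis map and the consistency of the per-axis sign, and these are exactly the three routine invariance checks described above; they carry the entire content of the argument, so I expect no serious obstacle. As an alternative packaging, one may verify $n$-point homogeneity by the same bookkeeping restricted to $n$-tuples and then invoke Corollary \ref{co:if-homog}, since the vertex count $k=2n$ satisfies $k\geq n+1$ for all $n\in\mathbb{N}$, immediately upgrading $n$-point homogeneity to $2n$-point homogeneity.
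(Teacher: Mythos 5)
Your proof is correct, but it takes a genuinely different route from the paper. The paper's proof is a one-line invocation of its general machinery: Theorem \ref{th:3dist_symm} (the orthoplex is centrally symmetric, and the isotropy subgroup of a vertex $v$ acts on the vertex figure $S(v,d_1)$ --- itself an $(n-1)$-orthoplex --- as its full symmetry group, so one inducts on dimension) combined with Corollary \ref{co:if-homog} to upgrade to $2n$-point homogeneity. You instead construct the required signed permutation explicitly from the distance matrix: the ``same axis'' equivalence relation, the induced axis bijection, and the per-axis sign are each read off from the distances, and the three invariance checks you isolate are exactly the right ones ($d\in\{0,2b\}$ detects a common axis, $d=b\sqrt{2}$ detects distinct axes, and equal distances force the sign pattern within a class). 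What your approach buys: it is self-contained and elementary, needs no induction and none of the general theorems, exhibits the isometry concretely, and delivers $m$-point homogeneity for every $m$ in one stroke. It also sidesteps a small awkwardness in the paper's route: Theorem \ref{th:3dist_symm} is stated for vertex sets with exactly three distinct nonzero distances, whereas the orthoplex has only two ($b\sqrt{2}$ and $2b$), so the paper's citation applies only in a degenerate, \emph{a fortiori} sense. What the paper's approach buys: brevity given the machinery already developed, and a demonstration of how the central-symmetry reduction and the dimension-count upgrade (Corollary \ref{co:if-homog}) work in tandem --- the same pattern it reuses for the cube, the icosahedron, and the Gosset polytopes. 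Your closing alternative (verify $n$-point homogeneity, then apply Corollary \ref{co:if-homog} with $k=2n\geq n+1$) is essentially the second half of the paper's own argument, reconstructed.
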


\begin{proof}
It follows from Theorem \ref{th:3dist_symm} and Corollary \ref{co:if-homog} applied to the $n$-orthoplex.
\end{proof}

\begin{example}\label{ex:2simplex}
Let us check that the convex hull of the union of regular $n$-dimensional simplex and the simplex which is centrally symmetric to it, is $2(n+1)$-point
homogeneous.

As the first simplex $S_1$, we can take the convex hull of $n+1$ points
$v_i\in \mathbb{R}^{n+1}$, $i=1,\dots, n+1$, where $j$-th coordinate of  $v_i$ is $v_{i,j}=-1$ if $j\neq i$ and $v_{i,i}=n$.

One can easily see that the convex hull $P^n$ of $S_1\cup (-S_1)$ is the image of the orthogonal projection (along $(1,\dots,1)\in\mathbb{R}^{n+1}$)
of the standard $(n+1)$-orthoplex $\operatorname{Ort}$ (in $\mathbb{R}^{n+1}$), multipled by $n+1$, onto $n$-dimensional hyperplane
$$
H^{n}=\{x=(x_1,\dots,x_{n+1})\in \mathbb{R}^{n+1}: x_1+\dots x_{n+1}=0\}.
$$

It is easy to see that $P^n$ is homogeneous and satisfies conditions of
Theorem \ref{th:3dist_symm} with $m-1=n$. Then by Theorem \ref{th:3dist_symm} and
Corollary \ref{co:if-homog}, $P^n$ is $2(n+1)$-point
homogeneous.
\end{example}

\begin{remark}
$P^2$ is the regular hexagon and $P^3$ is the cube.
\end{remark}

\begin{example}\label{ex:Gos6.2}
Let us check that the vertex set $M$ of the polytope  $\operatorname {Goss\,}_6$ (see Example~\ref{ex:Gos6.1})
is two-point homogeneous.

At first, we note that the spheres $S(A_1,r)$ are non-empty exactly for $r=1$ and $r=\sqrt{2}$.
Indeed, $d(A_1, A_i)=1$ for $2\leq i \leq 17$ and $d(A_1, A_i)=\sqrt{2}$ for $18\leq  i \leq 27$.
The points $A_2 - A_{17}$ are vertices of a five-dimensional demihypercube (the corresponding hypercube has $32=2^5$ vertices of the form
$(\pm a, \pm a,\pm a,\pm a,\pm a,b)$),
and the points $A_{18} - A_{27}$ are the vertices of the five-dimensional hyperoctahedron ($5$-orthoplex),
which is a facet of the polytope $\operatorname{Goss\,}_6$ (lying in the hyperplane $x_6=-2b$).

It is easy to check that the isotropy subgroup $I(A_1)$ of the group $\Isom(M)=\Symm(\operatorname {Goss\,}_6)$
contains the orthogonal operators of the following two forms:

1) $B(x_1,x_2,x_3,x_4,x_5, x_6)= (x_{\sigma(1)}, x_{\sigma(2)},x_{\sigma(3)}, x_{\sigma(4)},x_{\sigma(5)}, x_6)$, where $\sigma \in S_5$
is any permutation of $(1,2,3,4,5)$;

2) $B(x_1,x_2,x_3,x_4,x_5, x_6)= ( \pm x_1, \pm x_2, \pm x_3, \pm x_4, \pm x_5, x_6)$, where the number of signs ``$-$'' is even.

Now, it is easy to see that a composition of some suitable maps of the above forms moves any given point in
the sphere $S(A_1,1)$ or in the sphere $S(A_1,\sqrt{2})$ to any other given point in the same sphere.
By Proposition \ref{pr:two-point homogeneous}, $M$ is two-point homogeneous.
\end{example}
\smallskip

\begin{example}\label{ex:Gos7.1}
Let us consider a description of the Gosset polytope $\operatorname{Goss\,}_7$.
Let us set it with the coordinates of the vertices in $\mathbb{R}^7$, as it is done in~\cite{Elte12}.
In what follows,
$\overline{a_1,a_2,\dots,a_m}$ means the set of all permutations of the elements $a_1,a_2,\dots,a_m$.

Let us put $a=\frac{\sqrt{2}}{4}$, $b=\frac{\sqrt{6}}{12}$, and $c=\frac{\sqrt{3}}{6}$.
We define the points $B_i \in \mathbb{R}^7$, $i=1,\dots,56$, as follows:
\begin{eqnarray*}
B_1=(0,0,0,0,0,0,3c), \quad\quad B_2=(0,0,0,0,0,4b,c), \quad\quad B_3=(a,a,a,a,a,b,c),\\
B_4 - B_{13}=(\overline{-a,-a,a,a,a},b,c), \quad \quad\quad \quad B_{14} - B_{18}=(\overline{-a,-a,-a,-a,a},b,c),\\
B_{19}-B_{23}=(\overline{2a,0,0,0,0},-2b,c), \quad  \quad\quad \quad \,\, B_{24}-B_{28}=(\overline{-2a,0,0,0,0},-2b,c),\\
B_{29}-B_{33}=(\overline{-2a,0,0,0,0},2b,-c), \quad  \quad\quad \quad \,\, B_{34}-B_{38}=(\overline{2a,0,0,0,0},2b,-c),\\
B_{39} - B_{48}=(\overline{a,a,-a,-a,-a},-b,-c), \,\quad \quad B_{49} - B_{53}=(\overline{a,a,a,a,-a},-b,-c),\\
B_{54}=(-a,-a,-a,-a,-a,-b,-c),\quad \quad\quad \quad \quad\quad\,\,\, B_{55}=(0,0,0,0,0,-4b,-c),  \\
B_{56}=(0,0,0,0,0,0,-3c).\hspace{95.5mm}
\end{eqnarray*}

The Gosset polytope $\operatorname{Goss\,}_7$ is the convex hull of these points.
It is clear that the last 28 points could be obtained from the first 28 point via the central symmetry with respect to the origin $O$.
Hence, the circumradius of this polytope is $3c=\frac{\sqrt{3}}{2}$.
It is clear that the convex hull of the points $B_i$ for $2\leq i \leq 28$ is the Gosset polytope $\operatorname{Goss\,}_6$
(the ``first''  $\operatorname{Goss\,}_6$, that is a vertex figure for the vertex
$B_1\in \operatorname{Goss\,}_7$).
The convex hull of the points $B_i$ for $29\leq i \leq 55$ is a polytope that is symmetric to the previous one (the ``second'' $\operatorname{Goss\,}_6$).

It is easy to check that $d(B_1, B_i)=1$ for $2\leq i \leq 28$, $d(B_1, B_i)=\sqrt{2}$ for $29\leq  i \leq 55$, and $d(B_1, B_{56})=\sqrt{3}$.

It is easy to see that the subgroup of the isometries of $\operatorname{Goss\,}_7$ that preserve all points of the straight line $Ox_7$,
acts transitively on the vertices
$B_i$ for $2\leq i \leq  28$, as well as for $29\leq i \leq  55$.
Indeed, any isometry of $\operatorname{Goss\,}_6$ (that is identified with  the convex hull of the points $B_i$ for $2\leq i \leq 28$) could be
uniquely extended to the isometry of $\operatorname{Goss\,}_7$, that fixes $B_1$ (as well as $B_{56}$).
In particular, the isotropy group $\I(B_1)$
acts transitively on all (non-empty) spheres $S(B_1, r)$. It is clear that there are exactly four non-empty spheres $S(B_1, r)$:
for $r=0$ and $r=6c=\sqrt{3}$ we get one-point sets, whereas for $r= 1$ and $r= \sqrt{2}$ we have two copies of $\operatorname{Goss\,}_6$.
By Proposition \ref{pr:two-point homogeneous}, $\operatorname{Goss\,}_7$ is $2$-point homogeneous.

Moreover, $\operatorname{Goss\,}_7$ is $3$-point homogeneous by Theorem \ref{th:3dist_symm} and Example \ref{ex:Gos6.2}.
\end{example}

\section{Some results on the point homogeneity degree}\label{se_m-point}

\begin{definition}\label{de_degree_hom}
For a given $n$-dimensional convex polytope $P$, let $q$ be a natural number such that $P$ is $q$-point homogeneous,
but is not $(q+1)$-point homogeneous. Then $q$ is called the point homogeneity degree of the polytope $P$.
If $P$ is $m$-homogeneous for all natural $m$ then we define its point homogeneity degree as $\infty$.
\end{definition}

It should be noted that there are  polytopes with the point homogeneity degree $\infty$ (compare with Corollary \ref{co:if-homog}).
For instance, this property is inherent in the following polytopes:
any regular $m$-gon $P$ in $\mathbb{R}^2$ for $m \geq 3$ (Example \ref{ex:reg_polygon.1}), the $n$-dimensional regular simplex (Example \ref{ex:reg_n_sim})
and the $n$-orthoplex for all $n\geq 1$
(Corollary \ref{co:ortho-homog});
the $n$-cube for $n=1,2,3$, the icosahedron (Corollary \ref{co: homsimplepol}).

If the vertex set of a polytope $P$ is homogeneous then the point homogeneity degree of $P$ is $\geq 1$. Otherwise, it is equal to $0$.
We will show that the $n$-cube  has the point homogeneity degree $3$ for $n \geq 4$ (see Corollary \ref{co:homdegcube} below),
the dodecahedron has the point homogeneity degree $2$ (Theorem \ref{th:dod} below). Note also that the cuboctahedron has the point homogeneity degree~$2$,
while all other Archimedean polyhedra have the point homogeneity degree $1$
(Theorems \ref{th:reg_pol} and \ref{th:dod}).

On the other hand, we do not know what is the point homogeneity degree for the $24$-cell, $120$-cell, $600$-cell, as well as for Gosset polytopes
(with the exception of the four-dimensional truncated simplex, whose point homogeneity degree is $2$ due to Proposition~\ref{pr:3-point n-simplex}).

\begin{remark}
It should be noted that the $24$-cell, as well as the $120$-cell and $600$-cell,
contains a $4$-cube (some $16$ vertices of any of these polytope are the vertices of a $4$-cube).
We know that the $4$-cube is not $4$-point homogeneous. It is possible that the $24$-cell, $120$-cell, and $600$-cell have the same property.
\end{remark}

\begin{quest}\label{qu_1}
What is the point homogeneity degree of a given regular polytope $P$?
\end{quest}

\begin{quest}\label{qu_2}
What is the point homogeneity degree of a given semiregular polytope $P$?
\end{quest}

We know that the $6$-dimensional Gosset polytope $\operatorname {Goss\,}_6$ has the point homogeneity degree $\geq 2$ (Example \ref{ex:Gos6.2})
and the $7$-dimensional Gosset polytope $\operatorname {Goss\,}_7$ has the point homogeneity degree $\geq 3$ (Example \ref{ex:Gos7.1}).

\medskip

We are going to find the point homogeneity degree of {\it the $n$-cube for $n \geq 4$.}

\begin{prop}\label{pr:3-point n-cube}
The vertex set of the $n$-cube, $n\geq 2$, is $3$-point homogeneous.
\end{prop}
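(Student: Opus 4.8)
The plan is to identify the vertices of the $n$-cube with binary vectors and reduce the $3$-point homogeneity to an elementary counting argument. Represent $P$ as the convex hull of the points $(\pm 1,\dots,\pm 1)\in\mathbb{R}^n$ and associate to each vertex $(\varepsilon_1,\dots,\varepsilon_n)$ the vector in $\mathbb{F}_2^n$ whose $i$-th entry is $0$ if $\varepsilon_i=1$ and $1$ if $\varepsilon_i=-1$. Under this correspondence two vertices differing in exactly $k$ coordinates are at Euclidean distance $2\sqrt{k}$, so the distance between vertices is a strictly increasing function of their Hamming distance; in particular $d(A_i,A_j)=d(B_i,B_j)$ is equivalent to the coincidence of the corresponding Hamming distances. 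Moreover, the symmetry group $\mathbb{Z}_2^n\rtimes S_n$ acts on $\mathbb{F}_2^n$ as the affine maps $x\mapsto\sigma(x)+t$, where $\sigma\in S_n$ permutes coordinates and $t\in\mathbb{F}_2^n$ acts by coordinatewise addition.

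Given ordered triples $(A_1,A_2,A_3)$ and $(B_1,B_2,B_3)$ with matching pairwise distances, I would first apply the translations $x\mapsto x+A_1$ and $x\mapsto x+B_1$ to reduce to the case $A_1=B_1=0$; translations preserve Hamming distance, so the matching-distance hypotheses are retained. It then remains to find a coordinate permutation $\sigma\in S_n$ (which fixes $0=A_1=B_1$) with $\sigma(A_2)=B_2$ and $\sigma(A_3)=B_3$. To produce $\sigma$, partition the index set $\{1,\dots,n\}$ into four classes according to the pair of bits $(A_{2,i},A_{3,i})\in\{0,1\}^2$, and let $a_{cd}$ be the number of indices $i$ with $(A_{2,i},A_{3,i})=(c,d)$; define $b_{cd}$ analogously from $B_2,B_3$. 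A single permutation carries $(A_2,A_3)$ to $(B_2,B_3)$ simultaneously precisely when it maps, for each $(c,d)$, the $(A_2,A_3)$-class to the corresponding $(B_2,B_3)$-class, and such a $\sigma$ exists if and only if $a_{cd}=b_{cd}$ for all four pairs. Composing this $\sigma$ with the two translations yields the required element of $\Symm(P)$.

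The crux is therefore the equality of the four counts. The hypotheses translate into the linear relations $a_{10}+a_{11}=b_{10}+b_{11}$ (from $d(A_1,A_2)=d(B_1,B_2)$), $a_{01}+a_{11}=b_{01}+b_{11}$ (from $d(A_1,A_3)=d(B_1,B_3)$), and $a_{01}+a_{10}=b_{01}+b_{10}$ (from $d(A_2,A_3)=d(B_2,B_3)$), together with $a_{00}+a_{01}+a_{10}+a_{11}=n=b_{00}+b_{01}+b_{10}+b_{11}$. Setting $x_{cd}=a_{cd}-b_{cd}$, the first two give $x_{10}=x_{01}=-x_{11}$, whereupon the third reads $-2x_{11}=0$, forcing $x_{11}=0$ and hence $x_{10}=x_{01}=0$; the fourth then gives $x_{00}=0$. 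Thus all four counts coincide, the permutation $\sigma$ exists, and $P$ is $3$-point homogeneous.

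I expect this $4\times4$ linear step to be the only real content: the counting conditions coming from the three pairwise distances plus the total are exactly enough, and sufficiently independent, to pin down the full coordinate-class distribution. I would also note that the argument makes no distinctness assumption on the $A_i$, so coinciding points (permitted by Definition \ref{de:mpoint}) are handled automatically: if two entries of a triple coincide, the corresponding entries of the other triple coincide by the zero-distance condition, and the computation above is unaffected.
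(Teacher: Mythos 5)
Your proof is correct and follows essentially the same route as the paper's: reduce by homogeneity (your $\mathbb{F}_2$-translations are the sign-flip symmetries) to make the first points coincide, then realize the remaining isometry as a coordinate permutation, justified by matching cardinalities of the coordinate classes. Your explicit four-class count $a_{cd}=b_{cd}$ is exactly the counting fact the paper invokes via the sets $N(A_i)$ and their intersections, just spelled out in more detail.
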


\begin{proof}
For $n=2$ and $n=3$ it follows respectively from Example \ref{ex:reg_polygon.1} and Corollary~\ref{co: homsimplepol}.
Hence, we consider the standard $n$-cube $C_n$, $n\geq 4$.
The vertices of $C_n$ has the form $(\pm 1, \pm 1, \dots, \pm 1) \in \mathbb{R}^n$.

For any vertex $A$ of $C_n$, we consider the set $N(A)$ of indices $i$, $1\leq i \leq n$, such that $i$-th coordinates of $A$ is $-1$.

Let us consider three vertices $A_0, A_1, A_2$ of $C_n$.
Without loss of generality we may suppose that $A_0=(1,1,1,\dots,1)$, but $A_1$ and  $A_2$ are chosen arbitrarily (and all points are pairwise distinct).
The distance between $A_0$ and $A_1$ ($A_2$) is determined by the cardinality of the set $N(A_1)$ (respectively, $N(A_2)$), but the distance between
$A_1$ and $A_2$
are determined by the cardinality of the set $N(A_1) \cap N(A_2)$.

If we have some another triple of vertices $A'_0$, $A'_1$, $A'_2$ with the same pairwise distances, then there is an isometry of $C_n$
that maps the first triple of the vertices to the second one.
Indeed, since $C_n$ is homogeneous, we may assume that $A'_0=A_0$.

Now, the cardinality of the sets $N(A_1)$, $N(A_2)$, and $N(A_1) \cap N(A_2)$
are the same as the cardinality of the sets $N(A'_1)$, $N(A'_2)$, and $N(A'_1) \cap N(A'_2)$ respectively.
Therefore, there is a substitution of coordinates, that maps the sets $N(A_1)$, $N(A_2)$, and $N(A_1) \cap N(A_2)$
onto the sets $N(A'_1)$, $N(A'_2)$, and $N(A'_1) \cap N(A'_2)$ respectively.
The point $A’_0=A_0$ is fixed under any permutation of coordinates, hence the above permutation generates a desired isometry of $C_n$.
\end{proof}

\begin{prop}\label{pr:4-point n-cube}
The vertex set of the $n$-cube, $n\geq 4$, is not $4$-point homogeneous.
\end{prop}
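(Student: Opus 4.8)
The plan is to exhibit two ordered $4$-tuples of vertices of the $n$-cube with identical pairwise distances between corresponding points, for which no symmetry of the cube carries the first tuple to the second. I would work with the standard cube $C_n$ having vertices $(\pm 1,\dots,\pm 1)\in\mathbb{R}^n$, and reuse the bookkeeping of Proposition \ref{pr:3-point n-cube}: to each vertex $A$ associate the set $N(A)\subseteq\{1,\dots,n\}$ of coordinates equal to $-1$, so that the distance between $A$ and $B$ is determined by $|N(A)\,\triangle\,N(B)|$, the cardinality of the symmetric difference.

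First I would normalize by placing the first point of both tuples at $A_0=(1,\dots,1)$, for which $N(A_0)=\emptyset$. Any symmetry of $C_n$ fixing $A_0$ cannot flip signs (a sign change would send some $+1$ to $-1$), so the isotropy subgroup $\I(A_0)$ consists precisely of the coordinate permutations $S_n$, acting on the associated sets by $N(A)\mapsto\sigma(N(A))$. The crucial observation is that the prescribed pairwise distances of a tuple $(A_0,A_1,A_2,A_3)$ encode exactly the cardinalities $|N(A_i)|$ together with the pairwise intersection sizes $|N(A_i)\cap N(A_j)|$ (using $|S\,\triangle\,T|=|S|+|T|-2|S\cap T|$), yet carry \emph{no} information about the triple intersection $|N(A_1)\cap N(A_2)\cap N(A_3)|$. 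Since a coordinate permutation preserves the size of the triple intersection, any two configurations agreeing on all the former data but differing in the latter will be isometric as ordered distance structures while remaining inequivalent under $\I(A_0)$, and hence under $\Symm(C_n)$.

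To realize this I would choose three two-element subsets of $\{1,2,3,4\}\subseteq\{1,\dots,n\}$ in two ways. For the first tuple set $N(A_1)=\{1,2\}$, $N(A_2)=\{1,3\}$, $N(A_3)=\{1,4\}$, whose pairwise intersections all equal $\{1\}$ and whose triple intersection is $\{1\}$. For the second tuple set $N(A_1')=\{1,2\}$, $N(A_2')=\{2,3\}$, $N(A_3')=\{1,3\}$, whose pairwise intersections again all have size $1$ but whose triple intersection is empty. A direct check shows that every symmetric difference among the four associated vertices has cardinality $2$ in both configurations, so all six pairwise distances equal $2\sqrt{2}$ in each tuple; in particular they match position by position. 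If there were an isometry $f$ with $f(A_0)=A_0$ and $f(A_i)=A_i'$, it would be a permutation $\sigma\in S_n$ sending $\{1,2\}\mapsto\{1,2\}$, $\{1,3\}\mapsto\{2,3\}$, $\{1,4\}\mapsto\{1,3\}$; the first and third relations force $\sigma(1)=1$, which contradicts the second, so no such $\sigma$ exists. Equivalently, $\sigma$ would have to preserve the triple-intersection size $1\neq 0$, which is impossible.

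The only genuine work is selecting the counterexample so that the six pairwise distances truly coincide while the triple intersection changes; verifying that all symmetric differences share the common cardinality $2$ is the step to carry out with care. Since the three sets live inside $\{1,2,3,4\}$, the same pair of tuples works verbatim for every $n\geq 4$, which completes the proof.
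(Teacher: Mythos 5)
Your proof is correct and takes essentially the same approach as the paper: both exhibit the same pair of configurations (the ``star'' $\{1,2\},\{1,3\},\{1,4\}$ versus the ``triangle'' $\{1,2\},\{1,3\},\{2,3\}$, merely reordered in your version), note that all pairwise distances coincide, reduce to the isotropy subgroup $S_n$ of $(1,\dots,1)$, and derive a contradiction from the required action on these $2$-element sets. Your explicit observation that the distance data encode $|N(A_i)|$ and $|N(A_i)\cap N(A_j)|$ but not the triple intersection is a nice conceptual framing of what the paper checks directly via rigidity (an isometry fixing the first three vertices must be the identity).
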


\begin{proof}
At first, we consider the case $n=4$.
Let us consider the following two $4$-tuples of vertices of the standard $4$-cube:

$$
\Bigl((1,1,1,1),\, (-1,-1,1,1), \,(-1,1,-1,1), \,(-1,1,1,-1)\Bigr)\,
$$
$$
\Bigl((1,1,1,1),\, (-1,-1,1,1), \,(-1,1,-1,1),\, (1,-1,-1,1)\Bigr).
$$
Let us suppose that there is an isometry $f$ of the standard $4$-cube, that maps the first $4$-tuple onto the second one.

It is easy to see that the distances between the corresponding pairs of the points in these $4$-tuples coincide.
We see also that the first three vertices in these $4$-tuples coincide.
Now, any isometry that preserves these first three vertices (in particular, $f$)
should be the identical map that is impossible.

If $n>4$, we can consider two $4$-tuple of vertices of the standard $n$-cube, such that the first four coordinates are the same
as the above $4$-tuples for the standard $4$-cube have,
but all other coordinates are equal to $1$.
The same argument shows that the standard $n$-cube, $n > 4$, is not $4$-homogeneous.
\end{proof}
\medskip

We see that the vertex set of the $n$-cube is $3$-point homogeneous but is not $4$-point homogeneous for all $n \geq 4$, hence, we get

\begin{corollary}\label{co:homdegcube}
The point homogeneity degree of the $n$-cube is $3$ for  all $n \geq 4$.
\end{corollary}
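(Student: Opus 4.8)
The plan is to combine the two immediately preceding propositions with Definition \ref{de_degree_hom}; no genuinely new argument is required, so this is really a bookkeeping statement. First I would recall that by Proposition \ref{pr:3-point n-cube} the vertex set of the $n$-cube is $3$-point homogeneous for every $n \geq 2$, and in particular for every $n \geq 4$. This already supplies one value $q = 3$ for which the $n$-cube is $q$-point homogeneous.

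Next I would invoke Proposition \ref{pr:4-point n-cube}, which asserts that for $n \geq 4$ the vertex set of the $n$-cube fails to be $4$-point homogeneous. Putting the two facts together, for $n \geq 4$ the $n$-cube is $3$-point homogeneous but not $4$-point homogeneous, so by Definition \ref{de_degree_hom} its point homogeneity degree is exactly $3$.

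To make sure that $q = 3$ really is \emph{the} degree and not merely one admissible threshold, I would appeal to Proposition \ref{pr:m-point hom_obv}, which gives the monotonicity of the property in $m$: $3$-point homogeneity forces $1$- and $2$-point homogeneity, while failure of $4$-point homogeneity forces failure of $m$-point homogeneity for all $m \geq 4$. Hence the transition from "$m$-point homogeneous" to "not $m$-point homogeneous" occurs precisely between $m = 3$ and $m = 4$. There is essentially no obstacle in this argument; the only point to check is the mismatch in the ranges of $n$ in the two propositions (Proposition \ref{pr:3-point n-cube} covers $n \geq 2$, whereas Proposition \ref{pr:4-point n-cube} requires $n \geq 4$), so the combined conclusion is valid exactly on the common range $n \geq 4$, as stated in the corollary.
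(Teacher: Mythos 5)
Your proposal is correct and follows exactly the paper's own route: the text immediately preceding the corollary simply combines Proposition~\ref{pr:3-point n-cube} and Proposition~\ref{pr:4-point n-cube} with Definition~\ref{de_degree_hom}, just as you do, with the monotonicity of Proposition~\ref{pr:m-point hom_obv} implicit. Your explicit remarks on monotonicity and the common range $n \geq 4$ are sound bookkeeping but add nothing beyond the paper's one-line deduction.
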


\begin{remark}
It should be noted that the vectors in both $4$-tuples in the above example generate the following Hadamard matrices:
$$
\left(
\begin{array}{rrrr}
  1 & 1 & 1 & 1 \\
  -1 & -1 & 1 & 1 \\
  -1 & 1 & -1 & 1 \\
  -1 & 1 & 1 & -1 \\
\end{array}%
\right)
\quad \mbox{ and } \quad
\left(
\begin{array}{rrrr}
  1 & 1 & 1 & 1 \\
  -1 & -1 & 1 & 1 \\
  -1 & 1 & -1 & 1 \\
  1 & -1 & -1 & 1 \\
\end{array}%
\right).
$$
The necessary information about Hadamard matrices and related results can be found, for example, in \cite{HeWa}.
\end{remark}

\smallskip

We are going to find the point homogeneity degree of {\it the $n$-dimensional demihypercube}.
We consider the standard $n$-dimensional demihypercube $DC_n$, $n\geq 1$, as the convex hull
of the points $(\pm 1, \pm 1, \dots, \pm 1) \in \mathbb{R}^n$, where the quantity of the signs ``$-$'' is even
(if we consider the odd quantity of such signs, then we get another one $n$-dimensional demihypercube).
It is clear that $DC_n$ has the center of symmetry if and only if $n$ is even.
The symmetry group of $DC_n$ consists of those symmetries of the standard $n$-cube that preserved $DC_n$
(it contains all permutations of coordinates and all inversions of signs for even quantity of coordinates).

It is easy to see that $DC_2$ (that is a segment) is two-point homogeneous and $DC_3$ (that is a regular simplex) is $4$-point homogeneous.
Therefore, the point homogeneity degree of $DC_n$ is $\infty$ for $n =1,2,3$.

\begin{prop}\label{pr:3-point d-n-cube}
The vertex set of $DC_n$, $n\geq 4$, is $3$-point homogeneous.
\end{prop}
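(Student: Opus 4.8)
The plan is to mirror the proof of Proposition~\ref{pr:3-point n-cube}, the key observation being that the isometries used there are coordinate permutations, and every coordinate permutation preserves the parity of the number of coordinates equal to $-1$, hence lies in $\Symm(DC_n)$. For a vertex $A$ of $DC_n$ write $N(A)\subseteq\{1,\dots,n\}$ for the set of indices at which $A$ equals $-1$; by definition $|N(A)|$ is even. First I would record that $DC_n$ is vertex-transitive: the sign change flipping exactly the coordinates in $N(A)$ involves an even number of flips, so it is a symmetry of $DC_n$, and it sends $A$ to $(1,\dots,1)$. Consequently, given two triples $(A_0,A_1,A_2)$ and $(A_0',A_1',A_2')$ with equal corresponding pairwise distances, I may compose with suitable symmetries to assume $A_0=A_0'=(1,\dots,1)$, i.e. $N(A_0)=N(A_0')=\emptyset$, and it then suffices to find a symmetry fixing $(1,\dots,1)$ that carries $A_1\mapsto A_1'$ and $A_2\mapsto A_2'$.

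Next I would translate the distance hypotheses into combinatorial data. Since $d(X,Y)^2=4\,|N(X)\triangle N(Y)|$ for any two vertices, the equalities $d(A_0,A_i)=d(A_0',A_i')$ together with $N(A_0)=N(A_0')=\emptyset$ give $|N(A_i)|=|N(A_i')|$ for $i=1,2$, while the equality of $d(A_1,A_2)$ with $d(A_1',A_2')$, combined with the identity $|N(A_1)\triangle N(A_2)|=|N(A_1)|+|N(A_2)|-2|N(A_1)\cap N(A_2)|$, forces $|N(A_1)\cap N(A_2)|=|N(A_1')\cap N(A_2')|$. Thus the three cardinalities $|N(A_1)|$, $|N(A_2)|$, $|N(A_1)\cap N(A_2)|$ coincide for the two triples.

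Finally I would invoke the elementary fact that the $S_n$-orbit of an ordered pair of subsets $(N_1,N_2)$ of $\{1,\dots,n\}$ is determined by the four Venn region sizes $|N_1\cap N_2|$, $|N_1\setminus N_2|$, $|N_2\setminus N_1|$, $n-|N_1\cup N_2|$, and that these four are in turn determined by $n$ together with $|N_1|$, $|N_2|$, $|N_1\cap N_2|$. Since those three cardinalities agree for the two triples, there is a permutation $\sigma\in S_n$ with $\sigma(N(A_1))=N(A_1')$ and $\sigma(N(A_2))=N(A_2')$; the associated coordinate permutation fixes $(1,\dots,1)$ and sends $A_1\mapsto A_1'$, $A_2\mapsto A_2'$, as required. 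Note that $\sigma$ automatically preserves the even cardinalities, so the images remain vertices of $DC_n$.

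The argument is essentially routine; the one point deserving care is the remark opening the proof, namely that after reducing $A_0$ to the all-ones vertex the remaining problem is solved purely by permutations. This is exactly what makes the smaller symmetry group of $DC_n$ (only even sign changes, in place of all sign changes for the cube) irrelevant here, so that the proof of Proposition~\ref{pr:3-point n-cube} transfers almost verbatim.
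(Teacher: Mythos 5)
Your proof is correct and follows essentially the same route as the paper's: reduce by vertex-transitivity (even sign flips) to the case $A_0=A_0'=(1,\dots,1)$, encode the distances by the cardinalities $|N(A_1)|$, $|N(A_2)|$, $|N(A_1)\cap N(A_2)|$, and realize the required isometry by a coordinate permutation, which fixes the all-ones vertex and preserves $DC_n$. Your write-up is somewhat more explicit than the paper's (the even-sign-flip justification of homogeneity and the Venn-region argument for the existence of $\sigma$ are left implicit there), but the underlying argument is identical.
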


\begin{proof}
The vertices of $DC_n$ has the form $(\pm 1, \pm 1, \dots, \pm 1) \in \mathbb{R}^n$ with even quantity of the signs ``$-$''.

For any vertex $A$ of $DC_n$, we consider the set $N(A)$ of indices $i$, $1\leq i \leq n$, such that $i$-th coordinates of $A$ is $-1$.

Let us consider three vertices $A_0, A_1, A_2$ of $DC_n$.
Without loss of generality we may suppose that $A_0=(1,1,1,\dots,1)$, but $A_1$ and  $A_2$ are chosen arbitrarily (and all points are pairwise distinct).
The distance between $A_0$ and $A_1$ ($A_2$) is determined by the cardinality of the set $N(A_1)$ (respectively, $N(A_2)$), but the distance between
$A_1$ and $A_2$
are determined by the cardinality of the set $N(A_1) \cap N(A_2)$.

If we have some other triple of vertices $A'_0$, $A'_1$, $A'_2$ with the same pairwise distances, then there is an isometry of $DC_n$
that maps the first triple of the vertices to the second one.
Indeed, since $DC_n$ is homogeneous, we may assume that $A'_0=A_0$.

Now, the cardinality of the sets $N(A_1)$, $N(A_2)$, and $N(A_1) \cap N(A_2)$
are the same as the cardinality of the sets $N(A'_1)$, $N(A'_2)$, and $N(A'_1) \cap N(A'_2)$ respectively.
Therefore, there is a substitution of coordinates, that maps the sets $N(A_1)$, $N(A_2)$, and $N(A_1) \cap N(A_2)$
onto the sets $N(A'_1)$, $N(A'_2)$, and $N(A'_1) \cap N(A'_2)$ respectively.
The point $A’_0=A_0$ is fixed under any permutation of coordinates, hence the above permutation generates a desired isometry of $DC_n$.
\end{proof}

\begin{prop}\label{pr:4-point d-n-cube}
The vertex set of $DC_n$, $n\geq 4$, is not $4$-point homogeneous.
\end{prop}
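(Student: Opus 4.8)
The plan is to follow the template of Proposition \ref{pr:4-point n-cube}: produce two ordered $4$-tuples of vertices of $DC_n$ with identical pairwise distances that no symmetry of $DC_n$ can match. First I would take, for $n\ge 4$, the tuples
$$
\Bigl((1,\dots,1),\,(-1,-1,1,1,\dots,1),\,(-1,1,-1,1,\dots,1),\,(-1,1,1,-1,1,\dots,1)\Bigr),
$$
$$
\Bigl((1,\dots,1),\,(-1,-1,1,1,\dots,1),\,(-1,1,-1,1,\dots,1),\,(1,-1,-1,1,1,\dots,1)\Bigr),
$$
where every coordinate from the fifth one on (if any) equals $1$. Each of the eight listed vertices carries an even number of signs ``$-$'' ($0$ for the first entry, $2$ for the others), so all of them belong to $DC_n$; this is the only place where the passage from the cube to the demihypercube has to be checked, and it goes through directly.

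Next I would verify that the two tuples are isometric. Writing, as in the proof of Proposition \ref{pr:3-point d-n-cube}, $N(A)$ for the set of coordinates of $A$ equal to $-1$, the distance between two vertices equals $2\sqrt{k}$ with $k$ the cardinality of the symmetric difference of their $N$-sets. The first three entries coincide in the two tuples, and a short computation shows that the two fourth entries, with $N$-sets $\{1,4\}$ and $\{2,3\}$, are each at distance $2\sqrt2$ from $A_0$, $A_1$, $A_2$; hence all corresponding pairwise distances agree. This is precisely the observation behind the two Hadamard matrices displayed after Corollary \ref{co:homdegcube}.

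The core step is to show that no $f\in\Symm(DC_n)$ carries the first tuple onto the second. Since the first three entries coincide, such an $f$ would fix $A_0=(1,\dots,1)$, $A_1$ and $A_2$. Using that $\Symm(DC_n)$ consists of coordinate permutations followed by sign inversions on an even number of coordinates, the equality $f(A_0)=A_0$ forces the sign part to be trivial, so $f$ is a pure permutation $\sigma$ of the coordinates. Then $f(A_1)=A_1$ and $f(A_2)=A_2$ force $\sigma$ to preserve $N(A_1)=\{1,2\}$ and $N(A_2)=\{1,3\}$, whence $\sigma(1)=1$, $\sigma(2)=2$, $\sigma(3)=3$. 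Consequently $f(A_3)$ keeps a $-1$ in its first coordinate and cannot equal the fourth entry $(1,-1,-1,1,\dots)$ of the second tuple, a contradiction.

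The hard part, and the step on which everything rests, is the reduction that opens the core argument: that $\Symm(DC_n)$ is genuinely exhausted by the even signed permutations, with no further orthogonal symmetries of the vertex set. The construction of the tuples and the distance check above are uniform in $n\ge 4$, so the entire weight of the proof falls on this symmetry-group description. This is exactly where care is needed in low dimension, because the demihypercube can acquire accidental symmetries: most strikingly $DC_4$ is similar to the $4$-orthoplex and is therefore $8$-point homogeneous by Corollary \ref{co:ortho-homog}, so the rigidity of the triple $A_0,A_1,A_2$ — and hence the whole argument — is secured only once $n\ge 5$, the case $n=4$ requiring separate treatment. Establishing that $DC_n$ admits no orthogonal symmetry beyond the even signed permutations for $n\ge 5$ is thus the key input, and I expect it to be the main obstacle.
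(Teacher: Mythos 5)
Your construction and rigidity argument coincide with the paper's own proof: the same two $4$-tuples, the same distance verification via the sets $N(\cdot)$, and the same final step showing that an isometry fixing $A_0$, $A_1$, $A_2$ would have to be a pure coordinate permutation fixing the coordinates $1,2,3$, hence unable to send $A_3$ (whose first coordinate is $-1$) to $A_3'$ (whose first coordinate is $+1$); your version of this last step is in fact stated more cleanly than the paper's. Both your argument and the paper's rest on the same unproved input, namely that $\Symm(DC_n)$ consists exactly of coordinate permutations composed with even sign changes: the paper asserts this in the paragraph preceding Proposition \ref{pr:3-point d-n-cube}, and for $n\geq 5$ it is a true (and standard) fact, the Coxeter group of type $D_n$. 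So for $n\geq 5$ your proof is complete to the same standard as the paper's.

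The concern you raise about $n=4$, however, is not merely ``a case requiring separate treatment'' --- it is a disproof of the statement as printed, i.e.\ you have uncovered a genuine error in the paper. The four vectors $(1,1,1,1)$, $(-1,-1,1,1)$, $(-1,1,-1,1)$, $(-1,1,1,-1)$ are pairwise orthogonal, so the eight vertices of $DC_4$ are exactly $\pm$ these four vectors: $DC_4$ is a regular $4$-orthoplex ($16$-cell). By Corollary \ref{co:ortho-homog} it is $8$-point homogeneous, hence $4$-point homogeneous, so Proposition \ref{pr:4-point d-n-cube} and Corollary \ref{co:homdegdemicube} are false at $n=4$ and must be restated for $n\geq 5$. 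Concretely, the reflection of $\mathbb{R}^4$ in the hyperplane orthogonal to $(-1,1,1,-1)$ permutes the set $\{\pm(1,1,1,1),\pm(-1,-1,1,1),\pm(-1,1,-1,1),\pm(-1,1,1,-1)\}$, fixes $A_0,A_1,A_2$ and interchanges $A_3$ with $-A_3=A_3'$; it is an isometry of $DC_4$ that is not a signed permutation, which is precisely where the symmetry-group description (and with it the paper's proof) breaks down at $n=4$. Your only misstep is the hedge at the end: since your own observation shows $DC_4$ \emph{is} $4$-point homogeneous, no separate treatment can recover the claim there, and the correct conclusion is that the proposition holds exactly for $n\geq 5$.
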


\begin{proof}
Let us consider the following two $4$-tuples of vertices of $DC_n$:

$$
\Bigl((1,1,1,1, \dots),\, (-1,-1,1,1, \dots), \,(-1,1,-1,1, \dots), \,(-1,1,1,-1, \dots)\Bigr)\,
$$
$$
\Bigl((1,1,1,1, \dots),\, (-1,-1,1,1, \dots), \,(-1,1,-1,1, \dots),\, (1,-1,-1,1, \dots)\Bigr),
$$
where $\dots$ means $n-4$ elements $1$.
Let us suppose that there is an isometry $f$ of $DC_n$, that maps the first $4$-tuple onto the second one.

It is easy to see that the distances between the corresponding pairs of the points in these $4$-tuples coincide.
We see also that the first three vertices in these $4$-tuples coincide.
Now, the isometry that preserves the first two vertices (in particular, $f$)
could only interchange the first two coordinates as well as the last two coordinates of all points.
On the other hand, from the third vertices in the above $4$-tuples, we see that neither the first two coordinates nor the last two coordinates
could be interchanged by~$f$.
Hence, $f$ should preserve the first four coordinates of any vertex that is impossible and $DC_n$ is not $4$-homogeneous  $n \geq 4$.
\end{proof}
\medskip

We see that the $n$-dimensional demihypercube $DC_n$ is $3$-point homogeneous but is not $4$-point homogeneous for all $n \geq 4$.
Therefore, we get

\begin{corollary}\label{co:homdegdemicube}
The point homogeneity degree of the $n$-dimensional demihypercube is $3$ for  all $n \geq 4$.
\end{corollary}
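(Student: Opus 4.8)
The plan is to read off the result directly from the two propositions that immediately precede it, using Definition~\ref{de_degree_hom}. Recall that this definition declares the point homogeneity degree to be the natural number $q$ for which $DC_n$ is $q$-point homogeneous but fails to be $(q+1)$-point homogeneous. Proposition~\ref{pr:m-point hom_obv} guarantees that such a $q$ is well defined whenever it exists, since $m$-point homogeneity descends to $k$-point homogeneity for every $k \leq m$; thus it suffices to exhibit one value of $m$ at which homogeneity holds and the next value at which it fails.

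First I would invoke Proposition~\ref{pr:3-point d-n-cube}, which states that the vertex set of $DC_n$ is $3$-point homogeneous for all $n \geq 4$. Then I would invoke Proposition~\ref{pr:4-point d-n-cube}, which states that the same vertex set is \emph{not} $4$-point homogeneous for all $n \geq 4$. Combining these two facts, $DC_n$ is $3$-point homogeneous but not $4$-point homogeneous, so by Definition~\ref{de_degree_hom} its point homogeneity degree is exactly $3$. This closes the argument for every $n \geq 4$ uniformly, since both propositions were established in that range.

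There is essentially no obstacle left at this stage: all of the substantive work has already been carried out in the two propositions. The only genuinely nontrivial ingredient is the explicit pair of isometric, noncongruently embeddable $4$-tuples constructed in the proof of Proposition~\ref{pr:4-point d-n-cube}, whose first three coordinates force any candidate isometry to fix too much of the configuration; this is what rules out $4$-point homogeneity and pins the degree down from above. The corollary itself is then a one-line synthesis, and I would present it as such without further computation.
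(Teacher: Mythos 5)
Your proposal is correct and matches the paper exactly: the corollary there is stated as an immediate consequence of Propositions~\ref{pr:3-point d-n-cube} and \ref{pr:4-point d-n-cube}, combined via Definition~\ref{de_degree_hom}, just as you do. Nothing further is needed.
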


We shall find the point homogeneity degree of {\it the truncated $n$-dimensional simplex}.

The standard truncated $n$-dimensional simplex $TS_n$ can be represented as the convex hull of points
$B_{i,j} \in \mathbb{R}^{n + 1}$, $i,j=1,\dots,n+1$, $i<j$, such that
all coordinates of the point $B_{i,j}$ are zero except for its $i$-th and $j$-th coordinates, which are equal to~$1$.
It is clear that there are exactly $C_n^2=n(n-1)/2$ such points.
The distance between (different) points $B_{i,j}$ and $B_{s,t}$ is $\sqrt{2}$ for $i=s$ or $j=t$ and $2$ for $\{i,j\}\cap \{s,t\}=\emptyset$.
The isometry group of the corresponding simplex (the convex hull of points $A_i \in \mathbb{R}^{n + 1}$, $i = 1,\dots,n + 1$, such that
all coordinates of $ A_i $ are zero except for its $i$-th coordinate, which is $2$)
acts transitively on the vertex set of the truncated simplex under consideration.
Let $M$ be the vertex set of  $TS_n$. The above arguments show that $M$ is a finite homogeneous two-distance set.
Moreover, $TS_n$ is uniform in every dimension \cite{Cox34}.

Note that $TS_1$ is a one-point set, $TS_2$ is a regular triangle, $TS_3$ is an orthoplex (octahedron), $TS_4$ is a semiregular polytope.

The isometry group of $TS_n$ is the symmetric group $S_{n+1}$ for $n=2$ and $n \geq 4$ (the group of permutations of the symbols $1,2,\dots, n, n+1$),
see Theorem 1 in \cite{BMOW2018}. The isometry group of $TS_3$ is the isometry group of the octahedron $\mathbb{Z}_2^3 \rtimes S_3=\mathbb{Z}_2 \rtimes S_4$.

We will denote the vertex $B_{i,j}$ also by the symbol $\{i,j\}$ (the set of two indices).
Let us fix the vertex $B_{1,2}=\{1,2\}$. We see that the sphere $S(\{1,2\},\sqrt{2})$ (respectively, the sphere $S(\{1,2\},2)$)
consists of vertices  $\{i,j\}$ such that the set
$\{1,2\}\cap \{i,j\}$ contains exactly one element (respectively $\{1,2\}\cap \{i,j\}=\emptyset$).
The isotropy group $\I(\{1,2\})$ is the direct product $S_2 \times S_{n-1}$ (independent permutations of the elements $1,2$ as well as the elements
$3,4,\dots,n+1$).
Obviously, $\I(\{1,2\})$ acts transitively on spheres $S(\{1,2\},\sqrt{2})$ and $S(\{1,2\},2)$). Therefore $M$ is $2$-point homogeneous.

Let us show that $M$ is not $3$-point homogeneous. For this we consider the following triples of vertices:
$$
\bigl(\{1,2\}, \{1,3\},  \{2,3\}\bigr), \qquad \bigl(\{1,2\}, \{1,3\},\{1,4\}\bigr).
$$

If the isometry $f$ maps the first triple onto the second one, then from $f(\{1,2\})=\{1,2\}$ we see that $f$ either
interchanges the coordinates with numbers $1$ and $2$ or preserves both these coordinates. Next, from $f(\{1,3\})=\{1,3\}$ we see that $f$ preserves the
coordinates with numbers $1$, $2$ and $3$.
Therefore, the equality $f(\{2,3\})=\{1,4\}$ is impossible.
In this construction, we used four distinct coordinates of the vertices, that is important.

Hence, we have proved the following result.

\begin{prop}\label{pr:3-point n-simplex}
The point homogeneity degree of the truncated $n$-dimensional simplex is $2$ for $n\geq 4$ and is $\infty$ for $n=1,2,3$.
\end{prop}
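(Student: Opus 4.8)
The plan is to dispose of the small cases first and then to establish the two halves of the degree-$2$ claim for $n\ge 4$ separately, namely $2$-point homogeneity and the failure of $3$-point homogeneity. For $n=1,2,3$ I would identify $TS_n$ with a polytope already known to have infinite point homogeneity degree: $TS_1$ is a single point (hence $m$-point homogeneous for every $m$), $TS_2$ is a regular triangle, which is $m$-point homogeneous for all $m$ by Example \ref{ex:reg_polygon.1}, and $TS_3$ is the $3$-orthoplex (octahedron), which by Corollary \ref{co:ortho-homog} is $6$-point homogeneous; since it has exactly $6$ vertices, Proposition \ref{pr:m-point hom_obv} upgrades this to $m$-point homogeneity for all $m$. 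Thus the degree is $\infty$ in these three cases.

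For $n\ge 4$ the two structural facts I would rely on are that $M$ is a two-distance set and that $\Isom(M)=S_{n+1}$ acts by permuting the $n+1$ coordinate indices (both recorded in the preceding discussion). Writing each vertex as the unordered pair $\{i,j\}$, the distance between $\{i,j\}$ and $\{s,t\}$ equals $\sqrt 2$ when the two pairs share exactly one index and $2$ when they are disjoint. To prove $2$-point homogeneity I would fix the vertex $\{1,2\}$, identify its isotropy subgroup $\I(\{1,2\})$ with $S_2\times S_{n-1}$ (independent permutations of $\{1,2\}$ and of $\{3,\dots,n+1\}$), and check that this group acts transitively on each of the two nonempty spheres $S(\{1,2\},\sqrt 2)$ and $S(\{1,2\},2)$: any vertex sharing exactly one index with $\{1,2\}$ can be carried to any other such vertex, and likewise for the vertices disjoint from $\{1,2\}$. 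Proposition \ref{pr:two-point homogeneous} then yields $2$-point homogeneity.

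The remaining and most delicate step is to show that $M$ is \emph{not} $3$-point homogeneous for $n\ge 4$. Here I would exhibit the two triples $(\{1,2\},\{1,3\},\{2,3\})$ and $(\{1,2\},\{1,3\},\{1,4\})$, first checking that corresponding pairwise distances agree (in each triple every pair shares exactly one index, so all six distances equal $\sqrt 2$), and then arguing that no isometry realizes the correspondence. An isometry $f$ doing so is a permutation $\pi\in S_{n+1}$ of indices with $\pi(\{1,2\})=\{1,2\}$ and $\pi(\{1,3\})=\{1,3\}$; comparing these two conditions forces $\pi(1)\in\{1,2\}\cap\{1,3\}=\{1\}$, hence $\pi$ fixes $1$, and then $\pi$ fixes $2$ and $3$ as well. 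Consequently $f(\{2,3\})=\{2,3\}\ne\{1,4\}$, a contradiction. The decisive feature is that the second triple introduces the fourth index $4$, so the construction genuinely requires $n\ge 4$; this is precisely why $TS_1,TS_2,TS_3$ escape the conclusion.

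I expect the main obstacle to be the bookkeeping in this last step---both confirming that the two triples really are isometric and pinning down exactly which index-permutations survive the two fixing conditions. Everything else (the small cases and the transitivity of $\I(\{1,2\})$ on the two spheres) is routine once the two-distance structure and the description of $\Isom(M)$ as $S_{n+1}$ are in hand. Combining the three cases gives the stated degrees: $2$ for $n\ge 4$ and $\infty$ for $n=1,2,3$.
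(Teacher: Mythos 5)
Your proposal is correct and follows essentially the same route as the paper: the same identification of vertices with index pairs $\{i,j\}$, the same isotropy subgroup $S_2\times S_{n-1}$ acting transitively on the two spheres to get $2$-point homogeneity, and the very same pair of triples $(\{1,2\},\{1,3\},\{2,3\})$ and $(\{1,2\},\{1,3\},\{1,4\})$ with the index-fixing argument to rule out $3$-point homogeneity. Your handling of the cases $n=1,2,3$ (via Example \ref{ex:reg_polygon.1}, Corollary \ref{co:ortho-homog}, and Proposition \ref{pr:m-point hom_obv}) just makes explicit what the paper leaves to its preceding discussion.
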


\section{On $2$-point homogeneous polyhedra with equal edge lengths}\label{se_m-point.3-dim}

Recall that any regular polytope in $\mathbb{R}^3$ is $2$-point homogeneous
(see Proposition \ref{pr:3dim.2point.reg}).
The main results of this section are the proofs of Theorems  \ref{th:reg_pol} and \ref{th:edge}.

In the arguments below, we will use the spherical distance between the points.
Every polyhedron is assumed to be inscribed into a sphere with center in the origin $O$ and unit radius.

\begin{theorem}
\label{th:dod}
The point homogeneity degree of the dodecahedron $D$ is $2$ {\rm(}i.~e. $M$ is $2$-point homogeneous but not $3$-point homogeneous{\rm)}.
\end{theorem}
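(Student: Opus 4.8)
The plan is to obtain the $2$-point homogeneity for free and to spend all the effort on disproving $3$-point homogeneity by exhibiting two isometric ordered triples that no symmetry can match. Since $D$ is a regular polyhedron in $\mathbb{R}^3$, Proposition~\ref{pr:3dim.2point.reg} already says that its vertex set $M$ is $2$-point homogeneous; hence the entire content of the theorem is the failure of the $3$-point homogeneity, after which Definition~\ref{de_degree_hom} gives that the degree is exactly $2$.

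For the negative part I would work in the standard coordinate model, using in particular the eight vertices of one of the cubes inscribed in $D$: after rescaling to the unit circumsphere these are $\frac{1}{\sqrt3}(\pm1,\pm1,\pm1)$, while the remaining twelve vertices are obtained from $\frac{1}{\sqrt3}(0,\pm1/\varphi,\pm\varphi)$ by cyclic permutation of the coordinates (with all sign choices). Set $v=\frac{1}{\sqrt3}(1,1,1)$, $P_1=\frac{1}{\sqrt3}(1,1,-1)$, $P_2=\frac{1}{\sqrt3}(1,-1,1)$ and $P_3=\frac{1}{\sqrt3}(-1,1,1)$. A one-line inner-product computation gives $v\cdot P_1=v\cdot P_2=v\cdot P_3=\tfrac13$ and $P_1\cdot P_2=P_1\cdot P_3=-\tfrac13$, so the two ordered triples $(v,P_1,P_2)$ and $(v,P_1,P_3)$ realise exactly the same pairwise spherical distances. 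Thus, were $M$ $3$-point homogeneous, there would exist $f\in\Symm(D)$ with $f(v)=v$, $f(P_1)=P_1$ and $f(P_2)=P_3$.

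The crux is to rule out such an $f$. Since $f$ is orthogonal and fixes the two vectors $v$ and $P_1$, it fixes their span $W=\Sp\{(1,1,0),(0,0,1)\}$ pointwise and therefore acts on the line $W^\perp=\Sp\{(1,-1,0)\}$ as $\pm\Id$; hence $f$ is either the identity or the reflection in $W$, which is the coordinate transposition $(x,y,z)\mapsto(y,x,z)$. The latter is not a symmetry of $D$, because it sends the vertex $\frac{1}{\sqrt3}(0,1/\varphi,\varphi)$ to $\frac{1}{\sqrt3}(1/\varphi,0,\varphi)$, which is not a vertex of $D$ (no vertex has its small entry in the first slot and its large entry in the third). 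Consequently $f=\Id$, contradicting $f(P_2)=P_3$. This shows $M$ is not $3$-point homogeneous.

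I expect the main obstacle to be mere bookkeeping: pinning down a consistent coordinate model of $D$ and verifying that the offending reflection indeed fails to preserve the full vertex set. Conceptually the same conclusion follows from Proposition~\ref{pr:m-point homogeneous}: the isotropy group $\I(v)\cong S_3$ has order $6$, whereas the sphere $S(v,r)$ through $P_1,P_2,P_3$ carries six vertices on which one of the occurring distances is realised by more than six ordered pairs; a six-element group cannot act $2$-point transitively in this situation, and the explicit pair of triples above is simply a concrete witness of this counting obstruction.
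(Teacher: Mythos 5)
Your proposal is correct and follows essentially the same route as the paper: both exhibit the same witness configuration (three vertices of the inscribed cube lying on $S(v,d_2)$ at mutual distance $d_3$, giving two isometric ordered triples sharing their first two points) and then show no symmetry of $D$ can match them. The only difference is in the final verification step—you rule out the would-be symmetry by explicit coordinates (the map fixing $v$ and $P_1$ must be the reflection $(x,y,z)\mapsto(y,x,z)$, which is not a symmetry of $D$), whereas the paper invokes the simple transitivity of $\I(v)$ on the six-point sphere $S(v,d_2)$; your closing counting remark is in fact exactly the paper's argument.
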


\begin{proof}
Let us recall some property of the dodecahedron $D$. We can assume that the vertices of $D$ are the centers of the faces of an icosahedron $I$
with the center $O$, so that the inscribed sphere for $I$ is the circumscribed sphere for $D$,
while the spherical distance $d_1$ between adjacent vertices for $D$ is the angle between outer normals of adjacent faces for $I$.

The dihedral angle under the edge of the icosahedron is equal to  $\alpha=2\arcsin(\varphi/\sqrt{3})$ (see Table \ref{table0} or \cite{BerNik21}). Therefore
$$
d_1=2\pi-(2\pi/2+\alpha)=\pi-\alpha=2(\pi/2-\arcsin(\varphi/\sqrt{3}))=2\arccos(\varphi/\sqrt{3}),
$$
$$
\cos d_1=\frac{2\varphi^2}{3}-1= \frac{2\varphi^2-3}{3}= \frac{3+\sqrt{5}-3}{3}=\frac{\sqrt{5}}{3}.
$$
By the spherical cosine theorem, for the next spherical distance
$$
\cos d_2= \cos^2d_1+ \cos\left(\frac{2\pi}{3}\right)\sin^2d_1=
\cos^2d_1 - \frac{1}{2}(1-\cos^2d_1)=\frac{3\cos^2d_1-1}{2}=\frac{1}{3}.
$$
All possible spherical distances between vertices of the dodecahedron are equal to
$$
0< \arccos\left(\frac{\sqrt{5}}{3}\right) < \arccos\left(\frac{1}{3}\right)< \arccos\left(\frac{-1}{3}\right)< \arccos\left(\frac{-\sqrt{5}}{3}\right)<d_5=\pi.
$$

The isotropy subgroup $\I(v)$ of the vertex $v\in M \subset D$ in the symmetry group of $D$ consists of $6$ isometries,
three rotations around the axis $v(-v)$ by angles $0$, $2\pi/3$, $4\pi/3$ and three mirror reflections relative to every plane,
passing through $O$ and one of the edges of the dodecahedron, which includes $v$.
As a corollary, the group $\Isom(D)$ of all isometries of the dodecahedron has order $6\cdot 20=120$.

The spheres $S(v,d_1)$, $S(v,d_2)$, $S(v,d_3)$, $S(v,d_4)$ in $S^2(O,1)$ contain respectively $3$, $6$, $6$, $3$ vertices of the dodecahedron $D$,
while $D$ has also the vertices $v$ and $-v$. The isotropy group $\I(v)$ acts twice transitively on sets $M\cap S(v,d_1)$ and
$M\cap S(v,d_4)$ and simply transitively on sets
$M\cap S(v,d_2)$ and $M\cap S(v,d_3)$.

We get that the vertex set $M$ of $D$ is $2$-point homogeneous
from the above arguments and the homogeneity of the dodecahedron $D$.

The set $M\cap S(v_1,d_2)$, where $v_1\in M$, has vertices $v_2$,
$v_3$, $v'_3$ with the spherical distances between any two distinct vertices in $\{v_2,v_3,v'_3\}$ equal to $d_3$. Consequently,
the ordered triples $M_1=\{v_1,v_2,v_3\}$ and $M'_1=\{v_1,v_2,v'_3\}$
are isometric. At the same time, there is no isometry of $M$ and $S^2(0,1)$
such that $f(M_1)=M'_1$ because the isotropy subgroup $\I(v_1)$ acts simply transitively on $M\cap S(v_1,d_2)$ due to the above observations
(see also Fig. \ref{Fig_arch} a)).
\end{proof}

\smallskip

\begin{proof}[Proof of Theorem \ref{th:reg_pol}]
We know that
the tetrahedron, cube, octahedron, icosahedron  are at least $3$-point homogeneous polyhedra
(see Example \ref{ex:reg_n_sim}, Corollary \ref{co:ortho-homog} and Corollary \ref{co: homsimplepol}).
Hence, it suffices to apply Corollary \ref{co:if-homog} to these four regular polyhedra.
The corresponding result for
the dodecahedron follows from Theorem~\ref{th:dod}.
\end{proof}

\begin{figure}[t]
\vspace{10mm}
\begin{minipage}[h]{0.32\textwidth}
\center{\includegraphics[width=0.79\textwidth]{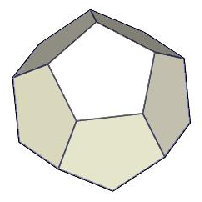} \\ a)}
\end{minipage}
\begin{minipage}[h]{0.32\textwidth}
\center{\includegraphics[width=0.74\textwidth]{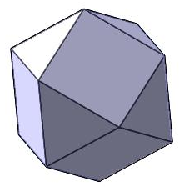} \\ b)}
\end{minipage}
\begin{minipage}[h]{0.33\textwidth}
\center{\includegraphics[width=0.79\textwidth]{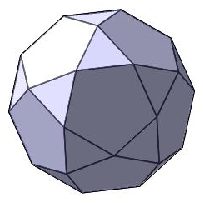} \\ c)}
\end{minipage}
\caption{
a) dodecahedron;
b) cuboctahedron;
c) icosidodecahedron.
}
\label{Fig_arch}
\end{figure}

\medskip

Now, we are going to study semiregular polyhedra.
Recall that all semiregular polytopes are ($1$-point) homogeneous.

\begin{lemma}\label{le:semrer3}
If a semiregular polyhedron $P \in \mathbb{R}^3$ is $2$-point homogeneous and is not regular, then $P$ is possibly the cuboctahedron or the icosidodecahedron.
\end{lemma}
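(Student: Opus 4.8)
The plan is to extract from $2$-point homogeneity the single necessary condition furnished by Proposition \ref{pr:two-point homogeneous}: for a fixed vertex $v$, the isotropy subgroup $\I(v)$ must act transitively on every sphere $S(v,r)$, and in particular on the sphere of smallest positive radius. First I would record that every semiregular polyhedron (an Archimedean solid, a regular prism, or an antiprism) has all its edges of one common length, since its faces are regular polygons glued along shared edges and its edge graph is connected. Writing $d_1$ for this common edge length, Corollary \ref{th: homog_dist} shows that $d_1$ is the minimal distance between distinct vertices, while Theorem \ref{th: circ_dist} shows that this minimal distance is attained only on adjacent pairs; hence $S(v,d_1)$ consists of exactly the vertices adjacent to $v$. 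Thus the condition to be exploited is that $\I(v)$ acts transitively on the set of vertices adjacent to $v$.

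Next I would attach to each edge issuing from $v$ its \emph{type}, namely the unordered pair $\{p,q\}$ of the numbers of sides of the two faces sharing that edge. Since every element of $\Isom(M)=\Symm(P)$ is an orthogonal map carrying $k$-gonal faces to $k$-gonal faces, it preserves edge types; consequently two edges at $v$ of different types lie in different orbits of $\I(v)$. Because all vertices adjacent to $v$ lie on the single sphere $S(v,d_1)$, transitivity of $\I(v)$ on $S(v,d_1)$ forces all edges at $v$ to have one and the same type. This is the conceptual core of the argument: a $2$-point homogeneous semiregular polyhedron must admit a vertex all of whose incident edges carry equal type, and by homogeneity the same then holds at every vertex.

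The remaining step is a finite inspection of vertex configurations. If the configuration is written cyclically as $(p_1,\dots,p_k)$, then the edge lying between the faces $p_i$ and $p_{i+1}$ has type $\{p_i,p_{i+1}\}$, so the equal-type condition says that all cyclically consecutive pairs $\{p_i,p_{i+1}\}$ coincide. When this common pair is $\{a,a\}$ all faces are $a$-gons and $P$ is a Platonic solid, excluded by hypothesis; when it is $\{a,b\}$ with $a\neq b$ the faces must alternate, forcing a quasiregular configuration $(a.b.a.b\dots)$. Running through the thirteen Archimedean solids I expect to find that only $(3.4.3.4)$ (the cuboctahedron) and $(3.5.3.5)$ (the icosidodecahedron) satisfy this: each truncated, rhombi-, or snub solid produces at least two distinct consecutive pairs (for instance a $\{6,6\}$ edge adjacent to a $\{3,6\}$ edge in the truncated tetrahedron), and $(4.6.8)$, $(4.6.10)$ even give three. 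For the infinite families, the prism $(n.4.4)$ gives types $\{n,4\}$ and $\{4,4\}$, equal only for $n=4$ (the cube, regular), and the antiprism $(n.3.3.3)$ gives types $\{n,3\}$ and $\{3,3\}$, equal only for $n=3$ (the octahedron, regular). Hence the only non-regular survivors are the two named solids.

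The genuinely mathematical content is entirely in the edge-type argument of the second paragraph; the main (but routine) obstacle I anticipate is the careful bookkeeping of the case check, making sure that no vertex configuration other than the two quasiregular ones $(p.q.p.q)$ has all its cyclically consecutive face-pairs equal, and that the alternating constraint genuinely leaves only the cuboctahedron and the icosidodecahedron once the degenerate (planar) and non-convex options such as $(3.6.3.6)$ or $(4.5.4.5)$ are discarded.
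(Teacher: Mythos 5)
Your proposal is correct and takes essentially the same approach as the paper: the paper likewise disposes of prisms and antiprisms (no symmetry can move a lateral edge to a base edge) and of the other eleven Archimedean solids by noting that they all have equal-length edges incident to non-isometric pairs of faces, which no symmetry — hence no isometry demanded by $2$-point homogeneity — can interchange. Your write-up simply makes explicit what the paper states summarily, namely the identification of $S(v,d_1)$ with the set of vertices adjacent to $v$ via Theorem \ref{th: circ_dist} and the cyclic edge-type bookkeeping in each vertex configuration.
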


\begin{proof}
All semiregular (right) prisms different from the cube are not $2$-point homogeneous because there is no isometry of the polyhedron,
moving the horizontal edge to the vertical one. Analogously, all semiregular antiprisms different from the octahedron are not $2$-point homogeneous.

Among $13$ Archimedean polyhedra, possibly only two, the cuboctahedron and the icosidodecahedron (see Fig. \ref{Fig_arch}),
are two-point homogeneous because all the other have different
edges incident to nonisometric pairs of faces containing them.
\end{proof}
\smallskip

The isotropy subgroups of all vertices of the cuboctahedron and the icosahedron coincide as subgroups of Euclidean isometry group:
they are generated by mirror reflections with respect to two mutually perpendicular planes and include also only the identity mapping and
the rotation by the angle $\pi$ around straight line that is the intersection of these planes.
The cuboctahedron and icosidodecahedron are, respectively, the convex hulls of the set of midpoints of all edges of the cube (or octahedron)
and of the dodecahedron (or icosahedron) and therefore have $12$ and $30$ vertices, respectively~\cite{BerNik21}.
Therefore, their isometry groups have orders $48$ and $120$ which are equal to the orders of the isometry groups of the cube
(or octahedron) and of the dodecahedron (or icosahedron).

\begin{prop}\label{pr:co}
The {\rm(}semiregular{\rm)} cuboctahedron is $12$-point homogeneous {\rm(}hence, its point homogeneity degree is $\infty${\rm)}.
\end{prop}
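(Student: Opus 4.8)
The plan is to realize the cuboctahedron as the convex hull of the twelve points $(\pm1,\pm1,0)$ together with their coordinate permutations, rescaled to lie on the unit sphere $S^2(O,1)$; this model is manifestly centrally symmetric. Fixing the vertex $v=(1,1,0)/\sqrt2$, I would first identify its isotropy group. The symmetries of the cuboctahedron are the signed coordinate permutations preserving the vertex set, and those fixing $v$ form the Klein four-group $\I(v)=\{e,s,t,st\}$, where $s$ is the reflection in $\{x_1=x_2\}$, $t$ the reflection in $\{x_3=0\}$, and $st$ the rotation by $\pi$ about the axis $\mathbb{R}v$. The strategy is then to establish two-point homogeneity and afterwards to invoke Theorem \ref{th:3max} with $n=3$ and $k=12$, whose centrally symmetric refinement is tailor-made for this computation; twelve-point homogeneity will then give point homogeneity degree $\infty$ by Proposition \ref{pr:m-point hom_obv}.

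For two-point homogeneity I would apply Proposition \ref{pr:two-point homogeneous}: the four non-empty spheres about $v$ contain respectively the four nearest vertices (at spherical distance $\pi/3$), the two equatorial vertices $(1,-1,0)$ and $(-1,1,0)$ (at distance $\pi/2$), the four antipodes of the nearest vertices (at distance $2\pi/3$), and the single antipode $-v$; a short check shows that $\I(v)$ acts transitively on each of these four sets, so $M$ is two-point homogeneous. Central symmetry is transparent here as well: writing $S(v,d_1)=\{P_1,\dots,P_4\}$ one has $S(v,d_3)=-S(v,d_1)$, while $S(v,d_2)$ is self-antipodal and $S(v,d_4)=\{-v\}$.

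For the main step I would use the second assertion of Theorem \ref{th:3max}, replacing $M$ by a maximal subset $N\subset M\cap B(v,\pi/2)$ containing no pair of diametrically opposite vertices, namely $N=\{v,P_1,P_2,P_3,P_4,Q_1\}$ with $Q_1$ one of the two equatorial vertices. The key structural fact is that $\I(v)$ acts regularly (simply transitively) on $\{P_1,\dots,P_4\}$; consequently it acts freely on ordered pairs of distinct $P_i$'s with exactly three orbits, and these three orbits are distinguished precisely by the three possible angles $\pi/3,\pi/2,2\pi/3$ between $P_i$ and $P_j$. Thus every non-degenerate triple $(v,P_i,P_j)$ has its isometry type, which is determined by $d(P_i,P_j)$, equal to a single $\I(v)$-orbit; here the angle-$2\pi/3$ pairs happen to give \emph{degenerate} triples (indeed $P_j=v-P_i$), which fall under the lower-dimensional case of the theorem and are already handled by two-point homogeneity together with Lemma \ref{distsph}. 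For the triples $(v,P_i,Q_1)$ one notes that the stabilizer of $Q_1$ inside $\I(v)$ is $\{e,t\}$, which interchanges $P_1\leftrightarrow P_2$ and $P_3\leftrightarrow P_4$; since $P_1,P_2$ lie at angle $\pi/3$ from $Q_1$ and $P_3,P_4$ at angle $2\pi/3$, the isometry type of such a triple again coincides with an $\{e,t\}$-orbit. Having verified the triple condition for all non-degenerate triples with entries in $N$, Theorem \ref{th:3max} yields that the cuboctahedron is $12$-point homogeneous.

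The main obstacle is the bookkeeping in this last verification rather than any single hard estimate. One must carefully separate the genuinely non-degenerate triples (to which the $\I(v)$-orbit analysis applies) from the coplanar ones, and, crucially, check that every triple involving the equatorial vertex can be matched using only the isometries $e$ and $t$ that fix $Q_1$, so that one never needs to push $Q_1$ to its antipode $Q_2\notin N$; the residual antipodal bookkeeping is exactly what Lemma \ref{inversion} legitimizes inside the proof of Theorem \ref{th:3max}.
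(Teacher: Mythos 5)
Your proposal is correct and follows essentially the same route as the paper: two-point homogeneity via Proposition \ref{pr:two-point homogeneous} (transitivity of $\I(v)$ on the four nonempty spheres about $v$), followed by the centrally symmetric refinement of Theorem \ref{th:3max} applied to exactly the same six-element antipode-free set $N\subset M\cap B(v,\pi/2)$. The only difference is organizational: the paper enumerates the non-degenerate triangles with first vertex $v$ explicitly and matches their orderings by hand, whereas you run the same verification through the orbit structure of the Klein four-group $\I(v)$ (simple transitivity on the four nearest vertices, orbits on ordered pairs distinguished by angle, and the stabilizer $\{e,t\}$ of the equatorial vertex); the two checks are equivalent.
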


\begin{proof}
The cuboctahedron, being centrally symmetric, has $4$ possible nonzero spherical distances between the vertices: $\frac{\pi}{3} < \frac{\pi}{2}< \frac{2\pi}{3}< \pi$.
For every vertex $v\in M$, the sets $M\cap S(v,\frac{\pi}{3})$,
$M\cap S(v,\frac{\pi}{2})$, $M\cap S(v,\frac{2\pi}{3})$ have respectively  $4$, $2$, $4$ vertices; on these sets the isotropy subgroup
$\I(v)$ acts respectively simply transitively, twice transitively, and simply transitively. Consequently the cuboctahedron is two-point homogeneous.

There are two ordered pairs of vertices $(v_1,v_2)$ and $(w_1,w_2)$ in $M\cap S(v,\frac{\pi}{3})$ and one point $w\in M\cap S(v,\frac{\pi}{2})$ such that
$$
d(v_1,v_2)=d(w_1,w_2)=\frac{\pi}{2}; \quad d(v_i,w_i)=\pi/3, \quad i=1,2; \quad d(v_1,w_2)=d(v_2,w_1)=2\pi/3,
$$
so that $v$ is the midpoint of spherical segments $[v_1,w_2]$ and $[v_2,w_1]$.
In addition,
$$
d(w,w_i)=\frac{\pi}{3},\quad d(w,v_i)=\frac{2\pi}{3},\quad i=1,2,
$$
so that $w_i$ is the midpoint of spherical segments  $[w,v_i]$, $i=1,2$. The set of all mentioned $6$ vertices is a maximal by inclusion subset
$N\subset M\cap B(v,\frac{\pi}{2})$, containing no pair of diametrically opposite points.

Next, we list all non-degenerate triangles with vertices in $N$ and the chosen vertex $v$.
There are only two isosceles triangles with base $[v,w]$ and lateral sides $\pi/3$ and two triangles with this base and other sides with lengths
$\pi/3$ and $2\pi/3$. In addition, there are only two equilateral triangles with sides $\pi/3$ and two isosceles triangles with base length
$\pi/2$ and lateral sides $\pi/3$ without the vertex $w$.

It is easy to check that for any two different isometric ordering of the vertices for these triangles with the first vertex $v$,
there exists a self-isometry of the cuboctahedron from $\I(v)$, moving one ordering to another one.
In the first case (with the vertex $w$ included) this is only the mirror reflection with respect to $[v,w]$,
in the second case, either the same or a different mirror symmetry, or the central symmetry with respect to the vertex $v$.

As a consequence of this observation and Theorem \ref{th:3max}, the cuboctahedron is $12$-point homogeneous.
\end{proof}

\begin{prop}\label{pr:ic}
The icosidodecahedron is not $2$-point homogeneous {\rm(}hence, its point homogeneity degree is $1${\rm)}.
\end{prop}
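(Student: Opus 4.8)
The plan is to show that the icosidodecahedron $P$ fails the two-point homogeneity criterion from Proposition~\ref{pr:two-point homogeneous}, by exhibiting a single vertex $v$ and a sphere $S(v,r)$ on which the isotropy subgroup $\I(v)$ does \emph{not} act transitively. Recall from the discussion before Proposition~\ref{pr:co} that the icosidodecahedron has $30$ vertices, that its full isometry group has order $120$, and that the isotropy subgroup $\I(v)$ of each vertex has order $120/30=4$. This group is generated by mirror reflections in two mutually perpendicular planes through $v$ and $-v$, so it is isomorphic to $\mathbb{Z}_2\times\mathbb{Z}_2$ (identity, two reflections, and the $\pi$-rotation about the line through $v$ and $-v$). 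The smallness of $\I(v)$ is exactly the leverage: a group of order $4$ can act transitively only on orbits of size $1$, $2$, or $4$.

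First I would set up the spherical-distance data. Since $P$ is centrally symmetric and inscribed in $S^2(O,1)$, I would list the nonzero spherical distances from a fixed vertex $v$ and record the cardinality $|M\cap S(v,r)|$ for each admissible radius $r$. The icosidodecahedron is a two-valent vertex figure (each vertex lies on two triangles and two pentagons), and its $30$ vertices distribute among several distance-spheres around $v$. The essential step is to identify a radius $r_0$ for which $|M\cap S(v,r_0)|$ exceeds $4$ — equivalently, is not among $\{1,2,4\}$ — or, more robustly, a sphere whose intersection with $M$ splits into more than one $\I(v)$-orbit. By the orbit-counting constraint above, as soon as some sphere carries, say, $5$ or $6$ vertices, the order-$4$ group $\I(v)$ cannot be transitive on it, and Proposition~\ref{pr:two-point homogeneous} immediately yields that $P$ is not two-point homogeneous.

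The main obstacle will be the explicit geometric bookkeeping: I must compute the spherical distances accurately (using the golden ratio $\varphi=\frac{1+\sqrt5}{2}$ and the standard coordinates of the icosidodecahedron as midpoints of the edges of the icosahedron or dodecahedron), and then correctly count how many vertices sit on each sphere $S(v,r)$. This is the analogue of the counting carried out for the cuboctahedron in the proof of Proposition~\ref{pr:co}, but here the larger vertex set makes at least one intermediate sphere too populous for an order-$4$ isotropy group. I would therefore pick the nearest-neighbor distance $r_0=\frac{\pi}{5}$ (or whichever is the minimal spherical distance, attained on adjacent vertices by Theorem~\ref{th: circ_dist}) and show that $M\cap S(v,r_0)$ contains more than $4$ vertices, or else exhibit two vertices at the same distance from $v$ that lie in distinct $\I(v)$-orbits. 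Once such a sphere is produced, the failure of transitivity is forced purely by the order bound $|\I(v)|=4$, and no further case analysis is needed to conclude that the point homogeneity degree of the icosidodecahedron equals $1$.
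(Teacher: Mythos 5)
Your overall framework is the right one, and it matches the paper's first proof: invoke Proposition~\ref{pr:two-point homogeneous} together with the fact that $|\I(v)|=120/30=4$, so that $\I(v)$ is the Klein four-group generated by two mirror reflections. However, the step you rely on to finish would fail. The counting strategy --- find a sphere $S(v,r_0)$ with more than $4$ vertices --- cannot succeed for the icosidodecahedron: taking $v=(0,0,\varphi)$ in the standard coordinates (the vertices are the cyclic permutations of $(0,0,\pm\varphi)$ and of $(\pm\tfrac12,\pm\tfrac{\varphi}{2},\pm\tfrac{\varphi^2}{2})$), the $30$ vertices split as $v$, $-v$, and seven spheres of \emph{exactly four} vertices each, at spherical distances $\pi/5$, $\pi/3$, $2\pi/5$, $\pi/2$, $3\pi/5$, $2\pi/3$, $4\pi/5$. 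So no sphere is ever ``too populous'' for a group of order $4$, and your claim that the large vertex set forces some over-populated sphere is simply false. Worse, your preferred choice $r_0=\pi/5$ fails doubly: $S(v,\pi/5)$ consists precisely of the four neighbors $(\pm\tfrac12,\pm\tfrac{\varphi}{2},\tfrac{\varphi^2}{2})$, and $\I(v)=\{\Id,\sigma_{xz},\sigma_{yz},\sigma_{xz}\sigma_{yz}\}$ (the two mirror reflections and the half-turn about the line $Ov$) acts \emph{simply transitively} on them. The same transitivity holds on the spheres of radii $\pi/3$ and $2\pi/5$, and hence, by central symmetry, on those of radii $3\pi/5$, $2\pi/3$, $4\pi/5$.

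The missing idea --- which is the actual content of the paper's first proof --- is to look at the one sphere where transitivity does fail, namely $S(v,\pi/2)$. Its four vertices are $(\pm\varphi,0,0)$ and $(0,\pm\varphi,0)$: two antipodal pairs, one lying in each of the two mirror planes of $\I(v)$. Every element of $\I(v)$ maps each of these mutually orthogonal planes to itself, so the pair in the $xz$-plane can never be sent to the pair in the $yz$-plane; thus $S(v,\pi/2)$ splits into two $\I(v)$-orbits of size $2$, and Proposition~\ref{pr:two-point homogeneous} finishes the proof. Your hedge (``or else exhibit two vertices at the same distance from $v$ that lie in distinct $\I(v)$-orbits'') gestures at exactly this, but without identifying the radius $\pi/2$ and this orbit structure there is no proof: that identification is the whole point, since on all other spheres the action is transitive. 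You might also compare with the paper's second, coordinate-free argument: if $C$ is the midpoint of an edge shared by a triangle and a pentagon, and $A$, $B$ are the vertices of the triangle and of the pentagon farthest from $C$, then $d(A,C)\neq d(B,C)$, so no isometry can interchange $A$ and $B$, even though the ordered pairs $(A,B)$ and $(B,A)$ are trivially isometric.
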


\begin{proof} We give two different proofs. Without loss of generality we can suppose that the origin $O$ in $\mathbb{R}^3$ is the center of the isocidodecahedron $P$.

{\it The first proof.} We can represent the icosidodecahedron $ID$ as a convex hull of the edge midpoints of a suitable dodecahedron $D$ in $\mathbb{R}^3$.
Let us consider some vertex $v$ of $ID$, i.~e. a midpoint of some edge $e$ of $D$. The isotropy subgroup $\I(v)$ of $ID$ is generated by two reflections
in $\mathbb{R}^3$ with respect to planes trough $v$ and the origin,
one of which contains  $e$, and another one is orthogonal to it. In both these planes, there are two diametrically
opposite midpoints of the edges of the dodecahedron at a spherical distance $\pi/2$ from $v$. This together with Proposition \ref{pr:two-point homogeneous}
imply the proposition, since there is no
isometry from $I(v)$, sending one of the planes to the other.

{\it The second proof.} Let us consider two faces of the icosidodecahedron $ID$ with a common edge $l$ (one of them is triangular and the other is pentagonal),
and let $C$ be the midpoint of the edge $l$.
In the triangular face and in the pentagonal face, take the vertices $A$ and $B$ respectively, that are the furthest vertices from the point $C$ in their faces.
It is clear that the distances from $A$ and $B$ to $C$ are different. Therefore, the icosidodecahedron does not have an isometry that interchanges $A$ and $B$.
Indeed, such an isometry must preserve the midpoint of the segment $[A,B]$, and hence also the line passing through this midpoint and the origin.
In particular, they will preserve the triangle $ABC$. It is clear that this is impossible.
The proposition is proved.
\end{proof}

\begin{remark}
Let assume that $v$ is a vertex of the icosidodecahedron, i.~e. is the midpoint of some edge $e$ of the dodecahedron.
Let us consider all $4$ dodecahedron faces intersecting with the edge $e$ and take the midpoint of all edges of these faces, i.~e.
the corresponding vertices of the icosidodecahedron. We shall obtain 15 such vertices plus two more its vertices on the distance
$\frac{\pi}{2}$ from $v$ outside these 4 faces. The corresponding picture of these vertices evidently implies the assertion in the first
proof of Proposition~\ref{pr:ic} that $\I(v)$ is generated by reflections in two mutually orthogonal planes.

It is possible to calculate that the spherical distances from $v$ to another these 14 vertices are equal to
$d_1=\frac{\pi}{5}< d_2=\frac{\pi}{3}< d_3=\frac{\pi}{2}$. So other possible
spherical distances between the vertices of the icosidodecahedron are
$d_4=\frac{2\pi}{3}< d_5=\frac{4\pi}{5}< \pi$.

From the above, we can conclude that there are $6$ geodesics (the great circles) of the circumsphere of the icosidodecahedron $ID$,
passing through the vertices $v$ and $-v$
of $ID$:
$2$ of them
each contain $10$ vertices, $2$ others each contain $6$ vertices, and the remaining $2$ each contain $4$ vertices from $ID$.
\end{remark}

\begin{proof}[Proof of Theorem \ref{th:edge}]
Since every edge of the polyhedron $P$ is the intersection of two its adjacent faces, then it follows from the conditions of theorem
that there exist no more than two face families such that the isometry group $I(P)$ acts transitively only on the faces from one family.

At first suppose that there are two such families, namely $\mathcal{F}_1$ and $\mathcal{F}_2$. Take any face $F\in \mathcal{F}_1$ and vertices
$A,B,C\in F$ on two adjacent edges of $F$. Since $d(A,B)=d(B,C)$ and $P$ is 2-point homogeneous, then there exists an isometry $f\in I(P)$ such that $f(A)=B$, $f(B)=C$.
Since $f$ cannot map a face from $\mathcal{F}_2$ onto a face from $\mathcal{F}_1$, then $f(F)=F$ and the inner angles of the face $F$ by the vertices $A$ and $B$ coincide.
This argument is also applied to any adjacent edges of the face $F$. Consequently the face  $F$ and all faces of the family $\mathcal{F}_1$
are regular pairwise congruent polygons. The same argument and statements are applied to the faces of the family $\mathcal{F}_2$.
Now in the consequence of $2$-point homogeneity (hence homogeneity) of the polytope $P$, it is semiregular.
On the base of the conducted investigation, in particular by Propositions~\ref{pr:co} and \ref{pr:ic}, $P$ is the cuboctahedron.

Now assume that $I(P)$ acts transitively on the family $\mathcal{F}$ of all faces of the polytope $P$.
Then one can easily deduce from condition of theorem that the sum $\sigma$ of inner angles of two intersecting by joint edge faces under the same
vertex of this edge depends nor an edge nor its vertex.

Let $F_1\in \mathcal{F}$ and $AB$, $BC$ be edges of the face $F_1$. By conditions of theorem, there exists $f\in I(P)$ such that $f(A)=B$, $f(B)=A$. If moreover
$f(F_1)=F_1$, then also $f(F_2)=F_2$, where $F_2$ is another face of the polyhedron $P$ with the edge $AB$ and $f$
inverses the orientation of the space  $\mathbb{R}^3$. This and the transitivity of the group $I(P)$ on edges imply that for every face
$F\in \mathcal{F}$ and every its edge $e$ there is $g\in I(P)$ such that $g(F)=F$ and $g$ transposes the vertices of the edge $e$.
It follows from here that all faces of the polytope $P$ are congruent regular polygons and $P$ is regular polytope.

But (for now) a priori it may be that for any $f\in I(P)$ such that $f(A)=B$, $f(B)=A$, it must be $f(F_2)=F_1$ and $f(F_1)=F_2$.
Then $f$ preserves the orientation of the space $\mathbb{R}^3$ and in view of transitivity of the group $I(P)$ on edges,
for any face $F\in \mathcal{F}$ and every its edge $e$ there exists a $g\in I(P)$ such that $g$ transposes the vertices of the edge $e$ and the faces which include $e$.
Then $f$ (respectively,~$g$) is the central symmetry of the sphere $S(0,1)$ relative to the midpoint of the edge $AB$ (respectively,~$e$).
Applying such procedures consecutively
to the edges $AB$ and $BC$, we obtain that inner angles of the face $F_1$ under vertices $A$ and $C$ are equal.
Then, if the number of vertices of the face $F$ is odd, then $F$ is a regular polygon and $P$ is a regular polyhedron.

Let the number $k$ of the face $F$ is even. Then $k\geq 4$ and the sum of inner
angles of the polygon $F$ is equal to
$$
\frac{k\sigma}{2}=(k-2)\pi\Longrightarrow \sigma = \frac{ 2(k-2)\pi}{k}\geq \pi,
$$
$$
k\geq 6\Longrightarrow \sigma = \frac{2(k-2)\pi}{k}\geq \frac{4\pi}{3}.
$$

If the quality of faces of the polytope $P$ with joint vertex $v\in M$ is  $l=4$, then the quality of pairs of adjacent faces (without joint faces)
with common vertex $v$ is equal to $2$ and the sum of inner angles under $v$ of all faces with the vertex $v$ is equal to $2\sigma\geq 2\pi$, what is impossible.
Hence $l=3$ and since the sum of inner angles of two intersecting by joint edge faces under the same vertex $v$ of this edge is equal to $\sigma$, then all
these three inner angles are equal to $\sigma/2$. Consequently all faces and the polyhedron $P$ are regular.

Furthermore $k\geq 6$ is impossible, otherwise in consequence of the second  inequality, written above in the row,
the sum of all inner angles of three polygons under the joint vertex $v$ is equal to
$$
\frac{3\sigma}{2}\geq \frac{3}{2}\cdot \frac{4\pi}{3}=2\pi,
$$
what is impossible. Thus, in the case under consideration it must be $k=4$, $l=3$, $\sigma=\pi$, and $P$ is a (regular) cube.
\end{proof}

\section{On $m$-point homogeneous polyhedra with several edge lengths}\label{ne_m-point.3-dim}

\begin{example}
\label{ex:pr-antpr}
The right three-dimensional (anti)prism with regular bases is three-point homogeneous if the length of the lateral edge is longer than the sides and diagonals of the bases.
\end{example}

\begin{example}\label{ex:simplsev}
Let us consider a simplex $S$ in $\mathbb{R}^3$ with given three edge lengths $0 < a < b < c$, $c^2<a^2+b^2$, such that every pair of edges without
common vertex have equal Euclidean lengths and each of the number $a, b, c$ is the length of edges
for some pair. Hence we have a 3-parameter family of simplices  in~$\mathbb{R}^3$. One can prove that every such simplex is 4-point
homogeneous. Hence we have a 3-parameter family of 4-point homogeneous simplices
(4-point sets) in $\mathbb{R}^3.$
\end{example}

\begin{example}\label{ex:setsev}
If a finite metric space $(M,d)$ is homogeneous and for some its point the distances to all other points are pairwise distinct, then $(M,d)$ is $k$-point
homogeneous for every $k\geq 1$. Indeed, it suffices to move only one point of the first $k$-tuple to the corresponding point of the second $k$-tuple with a suitable
isometry and all other points will automatically be placed in the required places.
\end{example}

\begin{prop}\label{pr:octsev}
Let $P$ be a $3$-dimensional polyhedron, isomorphic to the octahedron, with diametrically opposite pairs of vertices $A,B$; $C,D$; $E,F$; $d_1,d_2$
be positive numbers such that $d_1<d_2$, $d_1+d_2=\pi$. Let us set distances
$$
d(A,B)=d(C,D)=d(E,F)=\pi;
$$
$$
d(A,C)=d(A,E)=d(B,D)=d(B,F)=d(C,E)=d(D,F)=d_1;
$$
while the rest 6 distances to be equal to $d_2$. Then $(P,d)$ is realized as the three-dimensional convex centrally symmetric polyhedron in
$\mathbb{R}^3$ with vertices on $S^2(0,1)$ and with indicated spherical distances between the vertices. Moreover, $(P,d)$ is a $6$-point homogeneous polyhedron.
\end{prop}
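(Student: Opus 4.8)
The plan is to first realize the abstract $6$-point metric space $(P,d)$ concretely as a centrally symmetric triangular antiprism inscribed in $S^2(O,1)$, and then to read off its $6$-point homogeneity directly from Theorem \ref{th:3dist_symm}, exploiting that $(P,d)$ has a center of symmetry and exactly three distinct nonzero distances $d_1<d_2<\pi$.

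For the realization I would place an equilateral spherical triangle at a common latitude. Set $A=(\rho,0,z_0)$, $C=(\rho\cos\tfrac{2\pi}{3},\rho\sin\tfrac{2\pi}{3},z_0)$, $E=(\rho\cos\tfrac{4\pi}{3},\rho\sin\tfrac{4\pi}{3},z_0)$ with $\rho^2+z_0^2=1$, and then put $B=-A$, $D=-C$, $F=-E$. A one-line computation gives $\langle A,C\rangle=\tfrac{3z_0^2-1}{2}$, so choosing $z_0\in(1/\sqrt3,1)$ with $\cos d_1=\tfrac{3z_0^2-1}{2}$ forces the three top sides (and, by antipodality, the three bottom sides) to have spherical length exactly $d_1$. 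Every remaining distance is then automatic: the antipodal pairs give $d(A,B)=d(C,D)=d(E,F)=\pi$, while each of the six ``mixed'' pairs is the antipode of a side and hence has length $\pi-d_1=d_2$. Matching this against the prescribed list shows the six $d_1$-distances and the six $d_2$-distances come out precisely as stated, that the six points lie in convex position (a triangular antiprism, combinatorially the octahedron) on $S^2(O,1)$, and that $P$ is centrally symmetric. Note that the hypothesis $d_1<d_2$, i.e. $d_1<\pi/2$, is exactly what keeps $z_0$ real with $z_0^2\in(1/3,1)$, so the construction never degenerates.

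For the homogeneity I would record the data required by Theorem \ref{th:3dist_symm}, applied to $M=\{A,B,C,D,E,F\}$. First, $M$ is vertex-transitive: the rotation by $2\pi/3$ about the $z$-axis cycles $A\to C\to E$ and $B\to D\to F$, while the central inversion sends the top triangle to the bottom one, so together they act transitively on $M$. Second, the nonzero distances take exactly the three values $d_1<d_2<\pi$. Finally I would identify the relevant sphere: $S(A,d_1)=\{C,E\}$ is a two-point set, and the reflection $\sigma$ in the $xz$-plane fixes $A$ while interchanging $C$ and $E$, so $\sigma\in\I(A)$ and $\sigma$ realises the transposition of $\{C,E\}$.

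It then remains only to feed this into Theorem \ref{th:3dist_symm}. Since $\sigma$ induces the full symmetric group on $S(A,d_1)=\{C,E\}$, this sphere is $2$-point homogeneous under $\I(A)$, and being a two-element set it is therefore $k$-point homogeneous for every $k$ by Proposition \ref{pr:m-point hom_obv}. In particular $\I(A)$ acts $5$-point transitively on $S(A,d_1)$, which is precisely the hypothesis of Theorem \ref{th:3dist_symm} with $m=6$; hence $M$ is $6$-point homogeneous. I expect the main (if modest) obstacle to lie in the realization step: the fifteen pairwise distances are heavily over-determined by the two free parameters $\rho,z_0$, so one must verify that fixing the top triangle together with central symmetry reproduces exactly the prescribed partition of distances into the classes $d_1$, $d_2$, $\pi$, rather than any other assignment.
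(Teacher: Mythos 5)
Your proof is correct, but it reaches the homogeneity conclusion by a genuinely different route than the paper. For the realization, the two arguments are essentially equivalent: you write the top triangle explicitly on a latitude circle and take antipodes, while the paper produces three unit vectors $A,C,E$ with prescribed Gram matrix (the circulant matrix with off-diagonal entries $\cos d_1$, positive definite since $d_1<\pi/2$) and again takes antipodes; both yield the same antiprismatic configuration. The real divergence is in the second half. The paper first establishes $2$-point homogeneity directly (the transpositions of the pairs $\{A,B\},\{C,D\},\{E,F\}$ and the central symmetry are isometries), then verifies the triple condition of Theorem~\ref{th:3max}: for the ordered pair $(A,C)$ the only non-degenerate extensions are $\{A,C,E\}$ and $\{A,C,F\}$, which are not isometric, and similarly for $(A,F)$, so the hypothesis of Theorem~\ref{th:3max} holds and $3$-point, hence $6$-point, homogeneity follows. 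You instead feed the configuration into Theorem~\ref{th:3dist_symm}: since $M$ is vertex-transitive, centrally symmetric, has exactly three distances $d_1<d_2<\pi$, and $S(A,d_1)=\{C,E\}$ is a two-element set on which $\I(A)$ induces the full symmetric group (via the reflection $\sigma$), the sphere is $(m-1)$-point homogeneous under $\I(A)$ for every $m$, so $M$ is $m$-point homogeneous for every $m$ at once. Your route is shorter and avoids the case analysis of non-degenerate triples entirely — it is the same mechanism the paper uses for the cube and icosahedron in Examples~\ref{ex:3-cube} and~\ref{ex:icos} — whereas the paper's route via Theorem~\ref{th:3max} exhibits the triple structure explicitly and illustrates the maximal-subset criterion. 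One small point you should make explicit: Proposition~\ref{pr:m-point hom_obv} concerns homogeneity with respect to the space's own isometry group, so to get $(m-1)$-point transitivity \emph{under $\I(A)$} you need the observation (which your $\sigma$ provides) that every isometry of the two-point space $\{C,E\}$ is induced by an element of $\I(A)$; as written this is implicit but harmless.
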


\begin{proof}
Let us consider the following matrix $G$ with the value $\alpha = \cos d_1$:
\begin{displaymath}
G=
\left(\begin{array}{ccc}
1 & \alpha & \alpha\\
\alpha & 1 & \alpha \\
\alpha & \alpha & 1
\end{array}\right).
\end{displaymath}
The matrix $G$ is positive definite.
Therefore, there are linearly independent vectors $A, C, E\in S^2(0,1)\subset \mathbb{R}^3$
with the Gram matrix $G$. For example, we can take the vectors whose components are lines of the following matrix
\begin{displaymath}
V=
\frac{1}{\sqrt{1+2\beta^2}}\left(\begin{array}{ccc}
1 & \beta & \beta\\
\beta & 1 & \beta \\
\beta & \beta & 1
\end{array}\right),
\end{displaymath}
where $\beta>0,$ $\beta(2+\beta)/(1+2\beta^2)=\alpha$.
It is clear that the convex hull of the set $M=\{A,C,E,B=-A,D=-C,F=-E\}$ is a realization of the space $(P,d)$.

From the form of the matrix $G$ it follows that every transposition of the ordered pairs $\{A,B\}$, $\{C,D\}$, $\{E,F\}$ and
the central symmetry are isometries of the polyhedron $P$. Consequently it is two-point homogeneous.
For the ordered pair $\{A,C\}$, there are only two non-degenerate triples  $\{A,C,E\}$ and $\{A,C,F\}$, and they are not isometric.
For the ordered pair $\{A,F\}$, there are only two non-degenerate triples $\{A,F,D\}$ and $\{A,F,C\}$
and they are not isometric. Consequently, $(P,d)$ is $3$-point and $6$-point homogeneous by Theorem \ref{th:3max}.
\end{proof}

Let us consider some general idea how to disprove that a subset $M \subset \mathbb{R}^3$ is $3$-point homogeneous.

Let us suppose that $M$ is $3$-point homogeneous, hence, it is $2$-point homogeneous and $1$-point homogeneous.
Without loss of generality we may assume that $M \subset S(O,1)$, where $O$ is the origin in $\mathbb{R}^3$.
Now, let us take points $A,B \in M$, $A\neq B$, such that $O$ is not on the straight line $AB$.
Let $E$ be the midpoint of the segment $[A,B]$ and $L$ be a plane through $E$ and orthogonal to $AB$.

\begin{prop}\label{pr:non-3-point}
In the above assumptions, let us suppose that points $C,D \in M$ are such that $d(A,C)=d(A,D)$, $d(B,C)=d(B,D)$, and $C\neq D$. Then
the symmetry with respect to the plane $OAB$ in $\mathbb{R}^3$ is an isometry of $M$ that interchanges the points $C$ and $D$.
\end{prop}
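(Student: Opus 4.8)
The plan is to produce the desired reflection directly from the $3$-point homogeneity of $M$, and then to identify it with the symmetry across the plane $OAB$ using the fact that every isometry of $M$ is an orthogonal transformation fixing the origin (Proposition \ref{pr.efhs}).

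First I would observe that the ordered triples $(A,B,C)$ and $(A,B,D)$ are isometric: indeed $d(A,B)=d(A,B)$ trivially, while $d(A,C)=d(A,D)$ and $d(B,C)=d(B,D)$ hold by hypothesis. Since $M$ is assumed $3$-point homogeneous, there exists $f\in \Isom(M)$ with $f(A)=A$, $f(B)=B$, and $f(C)=D$.

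Next, by Proposition \ref{pr.efhs}, $f$ is the restriction to $M$ of an orthogonal transformation of $\mathbb{R}^3$ fixing the origin $O$; I keep the notation $f$ for this linear map. Because $O$ does not lie on the line $AB$, the position vectors $A$ and $B$ are linearly independent, so together with $O$ they span the plane $OAB$. As $f$ is linear and fixes both $A$ and $B$, it fixes the whole plane $OAB$ pointwise. An orthogonal transformation of $\mathbb{R}^3$ that fixes a two-dimensional subspace pointwise preserves its one-dimensional orthogonal complement and acts there by $\pm 1$, hence it is either the identity or the reflection $\sigma$ across $OAB$. Since $f(C)=D\neq C$, the map $f$ is not the identity, and therefore $f=\sigma$.

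Finally, $\sigma$ is an involution, so from $\sigma(C)=f(C)=D$ it follows that $\sigma(D)=C$, i.e. $\sigma$ interchanges $C$ and $D$; moreover $\sigma=f\in\Isom(M)$, so $\sigma$ is indeed an isometry of $M$. I expect the only point requiring care to be the linear-algebra step: one must use that $O$, $A$, $B$ are not collinear (equivalently, that $A$ and $B$ are linearly independent) in order to conclude that $f$ fixes the entire plane $OAB$ pointwise, since an orthogonal map fixing merely a line through $O$ could be a nontrivial rotation rather than the reflection $\sigma$. The hypothesis that $O$ is off the line $AB$ is exactly what rules this out.
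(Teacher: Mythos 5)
Your proof is correct, and it identifies the reflection by a different mechanism than the paper's own proof. Both arguments start identically: the triples $(A,B,C)$ and $(A,B,D)$ are isometric, so $3$-point homogeneity produces $f\in\Isom(M)$ with $f(A)=A$, $f(B)=B$, $f(C)=D$, and $f$ extends to an orthogonal transformation of $\mathbb{R}^3$ fixing $O$. You then finish inside $\mathbb{R}^3$ with linear algebra: $A$ and $B$ are linearly independent because $O$ is not on the line $AB$, so the linear map $f$ fixes the plane $OAB$ pointwise, acts by $\pm 1$ on its one-dimensional orthogonal complement, and, not being the identity (it moves $C$), must be the reflection $\sigma$ across $OAB$; since $\sigma$ is an involution, it interchanges $C$ and $D$. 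The paper instead reduces to dimension two: it projects by $\psi$ onto the perpendicular bisector plane $L$ of $[A,B]$, notes that $f$ induces an isometry of $M'=\psi(M)$ fixing the two distinct points $E$ and $O$ (the latter being the barycenter of $M'$), concludes from the planar classification of isometries with two fixed points that this induced map is the reflection in the line $EO$, and lifts it back to the reflection across $OAB$. Your route is shorter and self-contained; the paper's route is not wasted effort, however, because the objects it introduces ($\psi$, $M'$, the line $EO$) are exactly what the remark following the proposition reuses as a practical criterion for disproving $3$-point homogeneity (if $M'$ is not symmetric about $EO$, then no pair $C\neq D$ as in the hypothesis can exist). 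One tacit point shared by both proofs that you may wish to make explicit: the normalization $M\subset S(O,1)$ together with the homogeneity of $M$ places the barycenter of $M$ at $O$, which is what licenses the appeal to Proposition \ref{pr.efhs}.
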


\begin{proof}
Let us consider the orthogonal projection
$\psi: \mathbb{R}^3 \rightarrow L$ and $M'=\psi(M)$.
It should be noted that $\psi(A)=\psi(B)=E$, $\psi(O)=O$ (since $A,B \in M \subset S(O,1)$), and $O$ is the barycenter of $M'$.
It is clear that $C\neq D$ implies $\psi(C)\neq \psi(D)$ ($\psi(C)=\psi(D)$, $d(A,C)=d(A,D)$, and $d(B,C)=d(B,D)$ imply $C=D$).
Since $M$ is supposed to be $3$-point homogeneous and $(A,B,C)$, $(A,B,D)$ are triples of points from $M$ with corresponding equal distances, then
there is an isometry of $M$ that sends the triple $(A,B,C)$ to the triple $(A,B,D)$.

Applying $\psi$, we see that there is an isometry of $M'$ that preserves $E$ and sends $\psi(C)$ to $\psi(D)$.
Since $E\neq O$ and $O$ is the barycenter of $M'$, then the latter isometry should be the symmetry with respect to the straight line $EO$.
From this we easily get that
$C$ and $D$ are symmetric to each other with respect to the plane $OAB$ (recall that $d(A,C)=d(A,D)$ and $d(B,C)=d(B,D)$).
\end{proof}

\begin{remark}
If $M'$ in the above proof is not symmetric with respect to the straight line $EO$, then the equalities $d(A,C)=d(A,D)$ and $d(B,C)=d(B,D)$
for some points $C,D \in M$ imply that $C=D$.
\end{remark}

\section{Conclusion}\label{concl}

The notion of $n$-point homogeneity for a metric space was introduced by
G.~Birkhoff in \cite{Birk1944} by the following sentence:
``Any isometry between two (sub)sets of $n$ points or fewer points can be extended to a self-isometry of space''.
In the same paragraph he noticed that the Urysohn space \cite{Urys1927} has $n$-point homogeneity for every finite order $n$.

This posthumous paper by P.~Urysohn contains his construction of remarkable complete separable metric space $U$ which besides the
$n$-point homogeneity for every finite order $n$ is universal in the sense that every separable metric space is isometric to a subspace of $U$.
Moreover, any space with these properties is isometric to $U$ \cite{Urys1927}.

Recently were appeared many interesting results on the space $U$.
Let us mention only three of them. Any isometry between two compact subsets of the space $U$ can be extended to an isometry of $U$ onto itself \cite{Bog2002}.
The topological group $I(U)$ of all isometries of $U$ is universal in the sense that it contains an isomorphic copy of every topological group with
a countable base \cite{Usp1990}. The space $U$ is homeomorphic to the Hilbert space~$l_2$~\cite{Usp2004}.

It should be noted that there is no sense to classify (even finite) $m$-point homogeneous metric spaces up to isometry or similarity without additional
restrictions, because any such metric space $(M,d)$ generates a class of $m$-point homogeneous metric spaces of the following type:
$(M,\psi\circ\, d)$, where $\psi$ is any increasing concave (convex up) function such that
$\psi(0)=0$. For instance, one can take $\psi(t)=\ln(1+t)$, $t\geq 0$, or
$\psi(t)=2\sin (t/2)$, $0\leq t \leq \pi$.

H.~Busemann considered $2$-point homogeneous Busemann $G$-spaces (geodesic spaces) in 1942 \cite{Bus1942}. The latter can be described
as complete locally compact metric spaces $(M,d)$ such that any two points in
$M$ can be joined by a {\it shortest arc}, i.~e. an isometric embedding  $p:[0,d(x,y)]\rightarrow (M,d)$; shortest arcs are locally extendable; any shortest
arc has unique extension to a {\it geodesic}, i.~e. locally isometric mapping $\gamma: \mathbb{R}\rightarrow (M,d)$.

It is well known that every $2$-point homogeneous Riemannian manifold is isometric to Euclidean space or a Riemannian symmetric space of rank $1$, see, e.~g.
Theorem 8.12.8 and Corollary 8.12.9 in \cite{Wolf2011}.
In fact, this result holds for a more general situation, namely, for $2$-point homogeneous Busemann $G$-spaces. This can be easily deduced from papers  \cite{Wang1951},
\cite{Wang1952} by
H.C.~Wang for compact spaces and \cite{Tits1955} by J.~Tits for non-compact case.
Currently, there are various proofs of the above characterization of $2$-point homogeneous Riemannian manifolds. Especially important is very
short paper \cite{Sz1991} by Z.I.~Szab\'{o}, where he showed also that the above $2$-point homogeneous spaces are symmetric not using any classification,
thereby solving a problem of  Wolf from \cite{Wolf2011}.

H.~Busemann proved in \cite{Bus1956} that if for every two geodesics of a compact
Busemann $G$-space there exists an isometry of the space moving one of them to
the other (this condition is much weaker than $2$-point homogeneity of the space) then the space is isometric to one of compact Riemannian symmetric space of rank $1$.
V.N.~Berestovskii in~\cite{Ber2010} proved that non-compact Busemann $G$-space with the isometry group transitive on the family of its geodesics is isometric
to Euclidean space or non-compact Riemannian symmetric space of rank $1$, thus giving a positive answer to one Busemann problem.

As a corollary of quoted results, every $3$-point homogeneous Busemann $G$-space (in particular, Riemannian manifold) is isometric to Euclidean space, spherical
space, elliptic space, or hyperbolic space with appropriate constant sectional
curvature. This implies that in any such space every isometry of every two their subsets can be extended to an isometry of the entire space,
which is extremely different from the case with
the vertex sets of polytopes.

\vspace{15mm}


\begin{thebibliography}{99}

\bibitem{Ber2010}
{\sl Berestovskii V.~N.}
Solution of a Busemann problem~//
{\it Sib. Mat. Zh.}, 2010, V.\,51, N\,6. P.\,1215--1227 (in Russian). English translation:
Siberian Math. J., 2010, V.\,51, N\,6. P.\,962--970.
DOI: 10.1007/s11202-010-0095-3



\bibitem{BerNik2008}
{\sl Berestovskii V.~N., Nikonorov Yu.~G.}
Killing vector fields of constant length on Riemannian manifolds~//
{\it Sib. Mat. Zh.}, 2008, V.\,49, N\,3. P.\,497--514 (in Russian). English translation:
Siberian Math. J., 2008, V.\,49, N\,3. P.\,395--407.
DOI: 10.1007/s11202-008-0039-3


\bibitem{BerNik08}
{\sl Berestovskii V.~N., Nikonorov Yu.~G.}
On $\delta$-homogeneous Riemannian manifolds~//
{\it Diff. Geom. Appl.}, 2008, V.\,26, N\,5. P.\,514--535.
DOI: 10.1016/j.difgeo.2008.04.003


\bibitem{BerNik2009}
{\sl Berestovskii V.~N., Nikonorov Yu.~G.}
Clifford--Wolf homogeneous Riemannian manifolds~//
{\it J. Differ. Geom.}, 2009, V.\,82, N\,3. P.\,467--500.
DOI: 10.4310/jdg/1251122544

\bibitem{BerNik14}
{\sl Berestovskii V.~N., Nikonorov Yu.~G.}
Generalized normal homogeneous Riemannian metrics
on spheres and projective spaces~//
{\it Ann. Global Anal. Geom.,} 2014, V.\,45, N\,3. P.\,167--196.
DOI: 10.1007/s10455-013-9393-x

\bibitem{BerNik19}
{\sl Berestovskii V.~N., Nikonorov Yu.~G.}
Finite homogeneous metric spaces~//
{\it Sib. Mat. Zh.}, 2019, V.\,60, N\,5. P.\,973--995 (in Russian). English translation:
Siberian Math. J., 2019, V.\,60, N\,5. P.\,757--773.
DOI: 10.1134/S0037446619050021


\bibitem{BerNik20}
{\sl Berestovskii V.~N., Nikonorov Yu.~G.}
{\it Riemannian Manifolds and Homogeneous Geodesics},
Springer Monographs in Mathematics. Cham: Springer, 2020.
DOI: 10.1007/978-3-030-56658-6

\bibitem{BerNik21}
{\sl Berestovskii V.~N., Nikonorov Yu.~G.}
Finite homogeneous subspaces of Euclidean spaces~//
{\it Mat. Trudy}, 2021, V.\,24, N\,1. P.\,3–-34  (in Russian). English translation:
Siberian Advances in Mathematics, 2021, V.\,31, N\,3, P.\,155--176.
DOI: 10.1134/S1055134421030019

\bibitem{BerNik21n}
{\sl Berestovskii V.~N., Nikonorov Yu.~G.}
Semiregular Gosset polytopes~// {\it Izv. Ross. Akad. Nauk, Ser. Mat.}, 2022, V.\,86, N\,4, P.\,51--84 (in Russian).
English translation:  Izv. Math., 2022, 86 (in press).
DOI: 10.1070/IM9169

\bibitem{BerNik21nn}
{\sl Berestovskii V.~N., Nikonorov Yu.~G.}
On finite homogeneous metric spaces~// {\it Vladikavkaz Mathematical Journal}, 2022, V.\,2, N\,2, P.\,51--61.
DOI: 10.46698/h7670-4977-9928-z

\bibitem{Berg84}
{\sl Berger~M.}
{\it Geometry I}. Universitext, Springer-Verlag, Berlin, 2009.

\bibitem{BMOW2018}
Berman L.~W., Monson B., Oliveros D., Williams G.~I.
Fully truncated simplices and their monodromy groups~//
Adv. Geom., 2018, V.\,18, N\,2, P.\,193--206.
DOI: 10.1515/advgeom-2017-0047


\bibitem{Birk1944}
{\sl Birkhoff G.}
Metric foundations of geometry I.~//
{\it Trans. Amer. Math. Soc.}, 1944, V.\,55, N\,3. P.\,465--492.


\bibitem{BlBl}
{\sl Blind G., Blind R.}
The semiregular polytopes~//
{\it Comment. Math. Helv.}, 1991, V.\,66, N\,1, P.\,150--154.
DOI: 10.1007/BF02566640

\bibitem{Bog2002}
{\sl Bogatyi S.~A.}
Metrically homogeneous spaces~//
{\it Uspekhi Mat Nauk}, 2002, V.\,57, N\,2(344), P.\,3--22 (in Russian). English translation: Russian Math. Surveys, 2002, V.\,57, N\,2, P.\,221--240.
DOI: 10.4213/rm495


\bibitem{Bus1942}
{\sl Busemann~H.}
{\it Metric methods in Finsler spaces and in the foundation of geometry}.
Annals of Mathematics Studies, no. 8, Princeton University Press, Princeton, N.J., 1942.



\bibitem{Bus1956}
{\sl Busemann~H.}
Groups of motions transitive on sets of geodesics~//
{\it Duke Math. J.}, 1956, V.\,24. P.\,539--544.
DOI: 10.1215/S0012-7094-56-02352-3

\bibitem{Cox34}
{\sl Coxeter~H.~S.~M.}
Wythoff’s construction for uniform polytopes~// {\it Proc. London Math. Soc.}, 1934, V.\,38, P.\,327--339.
DOI: 10.1112/plms/s2-38.1.327

\bibitem{Cox73}
{\sl Coxeter~H.~S.~M.}
{\it Regular Polytopes.} 3d~ed.
New York: Dover, 1973.


\bibitem{Crom97}
{\sl Cromwell~P.~R.}
{\it Polyhedra.}
Cambridge: Cambridge Univ. Press, 1997.


\bibitem{rsr}
{\sl Dutour~Sikiri\'{c}~M.}
\url{http://mathieudutour.altervista.org/Regular/}, 24.12.2022.


\bibitem{DuVal}
{\sl Du Val P.}
{\it Homographies, quaternions, and rotations.} Oxford Mathematical Monographs Clarendon Press, Oxford 1964.


\bibitem{Elte12}
{\sl Elte~E.~L.}
{\it The Semiregular Polytopes of the Hyperspaces},
1912, Groningen: University of Groningen,
\url{https://quod.lib.umich.edu/u/umhistmath/ABR2632.0001.001}, 24.12.2022.


\bibitem{4D}
Four-Dimensional Euclidean Space.
\url{http://eusebeia.dyndns.org/4d/index}, 24.12.2022.

\bibitem{Gos}
{\sl Gosset Th.}
On the regular and semi-regular figures in space of $n$ dimensions~//
{\it Messenger Math.}, 1900, V.\,29. P.\,43--48.

\bibitem{Grun}
{\sl Gr\"unbaum B.}
{\it Convex Polytopes}. 2nd~ed.~/
Graduate Texts in Mathematics,~221. New York: Springer, 2003.


\bibitem{HeWa}
{\sl Hedayat A., Wallis W.D.}
Hadamard matrices and their applications // {\it Annals of Statistics}, 1978, V.\,6, N\,6, P.\,1184--1238.
DOI:10.1214/aos/1176344370



\bibitem{Mar}
{\sl Martini H.}
A hierarchical classification of Euclidean polytopes with
regularity properties~//
{\it Polytopes: Abstract, Convex and Computational}~/
Proc. of the~NATO Advanced Study Institute,
Scarborough, Ontario, Canada, August 20--September 3, 1993~/
Eds. T.~Bisztriczky et al.
Dordrecht: Kluwer Academic Publishers.
NATO ASI Ser., Ser. C, Math. Phys. Sci. 440, 1994. P.\,71--96.


\bibitem{Schl}
{\sl Schl\"{a}fli~L.}
{\it Theorie der vielfachen Kontinuit\"{a}t}. Hrsg. im Auftrage der Denkschriften-Kommission der schweizerischen naturforschenden Gesellschaft von J.H.~Graf.
Z\"{u}rich, Basel: Georg \& Co, 1901.

\bibitem{Sz1991}
{\sl Szab\'{o} Z.~I.}
A short topological proof for the symmetry of 2 point homogeneous spaces~//
{\it Invent. Math.}, 1991, V.\,106, N\,1. P.\,61--64.
DOI: 10.1007/BF01243903

\bibitem{Tam13}
{\sl Tamaru H.}
Two-point homogeneous quandles with prime cardinality~//
{\it J. Math. Soc. Japan}, 2013, V.\,65, N\,1. P.\,1117--1134.
DOI: 10.2969/jmsj/06541117


\bibitem{Tits1955}
{\sl Tits J.}
{\it Sur certaines classes d'espaces homogenes de groupes de Lie},
Acad. Roy. Belg. Cl. Sci. M\'em Coll., 1955, V.\,29, N\,3, 268 P.


\bibitem{Urys1927}
{\sl Urysohn P.}
Sur un espace m\'etrique universel~//
{\it Bull. Sci. Math.}, 1927, V.\,51. P.\,43--64, 74--90.


\bibitem{Usp1990}
{\sl Uspenskiy V.}
On the group of isometries of the Urysohn universal metric space~//
{\it Comment. Math. Univ. Carolinae} 1990. V.\,31., N\,1. P.\,181--182.


\bibitem{Usp2004}
{\sl Uspenskiy V.}
The Urysohn universal metric space is homeomorphic to a Hilbert space~//
{\it Topology and its Applications}, 2004, V.\,139., N\,1. P.\,145--149.
DOI: 10.1016/j.topol.2003.09.008


\bibitem{Wang1951}
{\sl Wang H.~C.}
Two theorems on metric spaces~//
{\it Pacific. J. Math.}, 1951, V.\,1. P.\,473--480.


\bibitem{Wang1952}
{\sl Wang H.~C.}
Two-point homogeneous spaces~//
{\it Ann. Math.}, 1952, V.\,55.,  N\,1. P.\,177--191.
DOI: 10.2307/1969427


\bibitem{Wolf2011}
{\sl Wolf~J.~A.}
{\it Spaces of constant curvature.} Sixth edition, AMS Chelsea Publishing, Providence, RI, 2011.



\end{thebibliography}
\end{document}